\documentclass[a4paper,11pt]{article}
\usepackage{amsmath,amssymb,amsthm}
\usepackage{cases}
\usepackage{array}   
\usepackage{a4wide}
\usepackage{graphicx}
\graphicspath{{./}}
\usepackage[hang,small,bf]{caption}
\usepackage[subrefformat=parens]{subcaption}
\usepackage{here}
\usepackage{placeins}
\usepackage{comment}
\usepackage{ulem}
\usepackage{xcolor}
\usepackage{tikz}
\usetikzlibrary{arrows.meta,calc,positioning}

\usepackage{mathrsfs}
\usepackage{stmaryrd}

\usepackage[pdfencoding=auto, psdextra]{hyperref}
\hypersetup{
    plainpages=false,
    unicode=true,          
    pdftoolbar=true,        
    pdfmenubar=true,        
    pdffitwindow=true,      
    pdftitle={A Structure-Preserving Method of Fundamental Solutions for the Multi-Phase Mullins--Sekerka Flow},
    pdfauthor={Tokuhiro Eto},
    pdfsubject={Multi-phase Mullins-Sekerka flow; charge simulation method},
    pdfkeywords={multi-phase Mullins-Sekerka flow; charge simulation method; method of fundamental solutions; triple junction; structure-preserving discretization},
    pdfnewwindow=true,      
    colorlinks=true,       
    linkcolor=blue,          
    citecolor=blue,        
    filecolor=blue,      
    urlcolor=magenta,          
    urlbordercolor=0 1 1
}

\numberwithin{equation}{section}


\newtheorem{prop}{Proposition}[section]

\newtheorem{lem}{Lemma}[section]
\newtheorem{rem}{Remark}[section]

\newtheorem{asm}{Assumption}[section]

\newcommand{\Section}[1]{Section~\ref{#1}}

\newcommand{\Remark}[1]{Remark~\ref{#1}}

\newcommand{\laplacian}[0]{\Delta}

\newcommand{\jump}[1]{\llbracket#1\rrbracket}
\newcommand{\tension}[1]{\varsigma_{#1}}

\newcommand{\dL}[1]{\mathrm{d}\mathcal{L}^{#1}}
\newcommand{\dH}[1]{\mathrm{d}\mathcal{H}^{#1}}
\newcommand{\closure}[1]{\overline{#1}}

\newcommand{\bv}[1]{\Vec{#1}}
\newcommand{\bvp}[1]{\mathbf{#1}}

\newcommand{\halfspace}[1]{\mathbb{H}}
\newcommand{\wall}[0]{\mathcal{W}}

\newcommand{\triplejunction}[1]{\mathcal{T}_{#1}}
\newcommand{\triplejunctionIndex}[2]{s^{#1}_{#2}}
\newcommand{\naturalset}[1]{\mathbb{N}_{\leq #1}}
\newcommand{\regionindexm}[1]{r_{#1}^-} 
\newcommand{\regionindexp}[1]{r_{#1}^+} 
\newcommand{\regionindexpm}[1]{r_{#1}^\pm} 
\newcommand{\phaseindexm}[1]{p_{#1}^-} 
\newcommand{\phaseindexp}[1]{p_{#1}^+} 
\newcommand{\region}[1]{\Omega_{#1}} 
\newcommand{\fundSolution}[0]{\Phi}
\newcommand{\smallChemical}[2]{w^{(#1)}_{#2}}
\newcommand{\smallChemicalNoRegion}[1]{w_{#1}}
\newcommand{\curvature}[0]{\varkappa}

\newcommand{\curvatureDisc}[2]{\kappa^h_{#1,#2}}
\newcommand{\charge}[3]{Q^{(#1)}_{#2,#3}}

\newcommand{\singularPoint}[3]{\bv{y}^{(#1)}_{#2,#3}}
\newcommand{\singularPointDummy}[3]{\bv{z}^{(#1)}_{#2,#3}}

\newcommand{\domain}[1]{\Omega_{#1}}
\newcommand{\normal}[2]{\bv{\nu}_{#1,#2}}
\newcommand{\vertex}[2]{\bv{X}_{#1,#2}}
\newcommand{\vertexSeq}[3]{\bv{X}^{(#1)}_{#2,#3}}
\newcommand{\vertexCenter}[2]{\bv{X}^*_{#1,#2}}
\newcommand{\vertexNormal}[2]{\bv{\nu}^h_{#1,#2}}
\newcommand{\vertexNum}[1]{N_{#1}}
\newcommand{\edge}[2]{\sigma_{#1,#2}}
\newcommand{\angleouter}[2]{\phi_{#1,#2}}
\newcommand{\scalarG}[5]{G^{(#1)}_{(#2,#3),(#4,#5)}}
\newcommand{\vectorH}[5]{\bv{H}^{(#1)}_{(#2,#3),(#4,#5)}}
\newcommand{\polygon}[1]{\Gamma^h_{#1}}

\newif\ifrevisionmarkup
\revisionmarkupfalse
\ifrevisionmarkup
  \definecolor{revision}{rgb}{1,0,0}
\else
  \definecolor{revision}{rgb}{0,0,0}
\fi
\newcommand{\fix}[1]{\textcolor{revision}{#1}}

\usepackage{ifthen}
\newcommand{\revisionbibkey}{EGN26-2}  
\newif\ifrevisionbibopen
\let\revisionoldbibitem\bibitem
\renewcommand{\bibitem}[1]{%
  \ifthenelse{\equal{#1}{\revisionbibkey}}%
    {\color{revision}\global\revisionbibopentrue}%
    {\ifrevisionbibopen\normalcolor\global\revisionbibopenfalse\fi}%
  \revisionoldbibitem{#1}%
}

\newcommand{\regionPhaseMap}[0]{\mathscr R_p}
\newcommand{\curveRegionMap}[0]{\mathscr C_r}
\newcommand{\curvePhaseMap}[0]{\mathscr C_p}
\newcommand{\R}[1]{\mathbb R^{#1}}
\newcommand{\polygonLength}[1]{L^h_{#1}}
\newcommand{\unknownsNum}[0]{n_{unk}}
\newcommand{\range}[1]{\operatorname{range} #1}
\newcommand{\ldot}[0]{\dot L}
\newcommand{\lDiscrete}[0]{L^h}
\newcommand{\ldotDiscrete}[0]{\dot L^h}
\newcommand{\regionDiscrete}[1]{\region{#1}^h}
\newcommand{\rotation}[0]{R_{90}}

\title{A Structure-Preserving Method of Fundamental Solutions for the Multi-Phase Mullins--Sekerka Flow}
\author{Tokuhiro Eto\thanks{Universit\'{e} Claude Bernard Lyon 1, CNRS, Centrale Lyon, INSA Lyon, Universit\'{e} Jean Monnet, ICJ UMR5208, 69622 Villeurbanne, France. email:eto@math.univ-lyon1.fr}}
\date{}

\begin{document}

\maketitle

\begin{abstract}
A charge simulation method\fix{, a variant of the method of fundamental solutions,} is applied to approximate the multi-phase Mullins--Sekerka flow in $\R{2}$ and in a
half-plane $\halfspace{2}$ bounded by a Neumann wall $\wall$.
In the underlying mathematical model, interfaces driven by their curvature are coupled through a harmonic chemical-potential field.
\fix{Each} chemical potential is represented by fundamental solutions centered at
charge points off the curve, so no bulk mesh is required. \fix{The scheme treats} curve
networks separating several phases at triple junctions, including phases that occupy more than one
region; on the half-plane boundary, the \fix{homogeneous Neumann} condition is imposed exactly by image charges, and
mobile contacts stay orthogonal to the wall. The discretization is structure-preserving in the sense that every bounded
phase area is conserved to machine precision at the velocity level by a null-space projection of the
discrete area constraints.
The proposed scheme is assessed through a convergence test against an exact three-concentric-circle solution.
\end{abstract}
\medskip
\noindent\textbf{Key words.} multi-phase Mullins--Sekerka flow, method of fundamental solutions, charge simulation method, structure-preserving discretization, triple junction, Neumann boundary condition

\smallskip
\noindent\textbf{MSC codes.} 65M80, 35R37, 53E10, 80A22

\section{Introduction}\label{sec:intro}

When a two-phase mixture is quenched into its miscibility gap, it separates into domains
that subsequently coarsen: large domains grow at the expense of small ones, the total
interfacial length decreases, and the area of each phase is conserved. In the
sharp-interface description of this \textit{late-stage Ostwald ripening} \cite{LS61}, the phase
boundaries move by the Mullins--Sekerka flow \cite{MS63}. Each interface is driven by
its own curvature through a Gibbs--Thomson relation, but the driving force is not
local: at every instant one solves for a harmonic bulk field, the chemical potential,
whose normal-derivative jump across the interface prescribes the normal velocity. The
interfaces communicate only through this bulk field. This non-locality is the
source of the flow's structure: it makes the evolution an $H^{-1/2}$-type
gradient flow of the interfacial energy that dissipates the total length while
preserving the area of each phase \cite{Chen93}. At the same time it is the source
of its numerical difficulty, since a global elliptic problem must be solved before
the geometry can be advanced by even a single step.

Moving from two phases to many turns the interface into a network of curves meeting at
triple junctions, and the evolution of a smooth boundary becomes the evolution of a
network with singular vertices. At each junction, the three incident curves obey a force
balance together with a balance of diffusive fluxes. \fix{We call that force balance the
\textit{Herring--Young condition} \cite{BR93,Herring51}, and use this single name
throughout; when the surface tensions are equal it is the symmetric $120^\circ$
configuration of Young's law.} The present paper is concerned with the numerical approximation of this
multi-phase Mullins--Sekerka flow, both in the whole plane and in a half-plane bounded
by a Neumann wall along which interfaces may slide while meeting the wall orthogonally.

We now briefly review numerical methods for interfacial flows.
\textit{Phase-field methods} regularize the sharp
interface by a thin diffuse layer and evolve the Cahn--Hilliard equation whose
singular limit corresponds to the Mullins--Sekerka flow equation \cite{ABC94,CH58,Pego89,S96}.
\textit{Level-set and threshold-dynamics methods}
\cite{EO15,MBO94,SS11,ZCMO96} represent an evolving interface implicitly as a level set of auxiliary functions in a bulk space.
\textit{Boundary integral methods} \fix{(BIM)} \cite{BB00,BCD95,CKT17,Mayer00,ZCH96} discretize the interface alone,
representing the harmonic field by a layer potential\fix{. L}ater boundary-integral work has addressed multi-component fluids and Ostwald ripening
\cite{AV94,HLS01}. In particular, \textit{recursively compressed inverse preconditioning} (RCIP) \cite{H13,HO08}
\fix{solves integral equations for elliptic problems in a piecewise smooth domain. It
achieves high accuracy even when the boundary includes triple junctions.}
\textit{Parametric finite element methods} \fix{(PFEM)}
\cite{BZ21,BGN07,BGN08,BGN08a,BGN10,BGN15,BGN20,GNZ26,R23} incorporate 
balance conditions around triple junctions naturally into a variational formulation for various surface evolution equations and have good mesh
properties. Moreover, the schemes are unconditionally stable with respect to time steps\fix{. S}ome of these schemes can preserve conservative quantities (e.g., the volume of each phase) at the fully discrete level.
Related structure-preserving parametric finite element schemes have also been developed for
the multi-phase Mullins--Sekerka problem and a degenerate multi-phase Stefan problem with triple junctions \cite{EGN24,EGN26-2,EGN26}.

\fix{Since the bulk field has to be recomputed on a geometry that has just moved, a
representation that carries no bulk mesh is attractive here.}
In this paper, we adopt a \textit{charge simulation method} (CSM), a variant
of the \textit{method of fundamental solutions} (MFS)
\cite{Bogomolny85,FK98,GC99,Katsurada90,KO88}. The idea is to represent each bulk function
as a finite combination of fundamental solutions of the Laplacian centered at
\textit{charge points} placed off the interface. Since every basis function is exactly
harmonic away from its singular point, the Laplace equation is satisfied automatically,
and the only conditions left to enforce are those on the interface itself.
No singular integrals arise, since the charge points are held at a positive distance from the collocation
points, and no bulk mesh is needed. In this sense, we say that the method is bulk mesh-free and is a
boundary-only method, in which the ambient domain has no mesh while the interface is
discretized as a polygonal curve.
For further details about the MFS, we refer the reader to a review article \cite{CH20}.

A numerical scheme based on the CSM for the two-phase Mullins--Sekerka
flow, including contact-angle problems on a wall boundary, has been proposed by the author \cite{E24}.
The contribution of the present paper is to extend that idea to several phases, triple
junctions, phases composed of more than one region, and a Neumann boundary. Since the closest existing multi-phase
scheme \cite{EGN24} discretizes the problem through a fundamentally different, bulk or
whole-domain weak formulation, we do not attempt a direct method-to-method numerical
comparison; instead we validate the present method against the exact three-phase solution
of \cite{EGN24} (Section~\ref{sec:experiments}).

{\color{revision}
Table~\ref{tab:method-comparison} places the three families side by side on the points where
they differ in kind rather than in accuracy on one example. The present method is at its most
useful where a bulk mesh is the expensive part: the ambient domain carries no mesh at any
time, the wall condition costs no unknowns, and no singular integral has to be evaluated.
It is at a disadvantage where implicit time stepping pays. Structure-preserving parametric
finite element schemes are unconditionally stable and several of them conserve the phase
volumes at the fully discrete level, whereas the scheme developed here is advanced
explicitly under $\Delta t = cN^{-2}$ and conserves the phase areas exactly at the level of
the reconstructed velocity, the polygonal areas drifting at higher order
(\Section{sec:analysis}, \Remark{rem:vel-vs-poly}). Its collocation matrix is also severely
ill-conditioned, as is usual for the method of fundamental solutions.
}

\begin{table}[tbp]
  \centering
  {\color{revision}
  \small
  \begin{tabular}{>{\raggedright\arraybackslash}p{0.17\linewidth} >{\raggedright\arraybackslash}p{0.215\linewidth} >{\raggedright\arraybackslash}p{0.195\linewidth} >{\raggedright\arraybackslash}p{0.29\linewidth}}
    \hline
     & BIM / RCIP & PFEM & CSM (present) \\
    \hline
    Triple junctions &
      corner singularities need dedicated treatment; RCIP is accurate there
      \cite{H13,HO08} &
      enter the variational formulation naturally \cite{BGN07,EGN24} &
      imposed geometrically, outside the linear system; curvature corrected locally
      (\Remark{rem:dcorr}) \\
    \hline
    Conditioning &
      well conditioned (second kind) &
      well conditioned, sparse &
      collocation block severely ill-conditioned; area rows via a small Gram matrix
      (\Remark{rem:conditioning}) \\
    \hline
    Neumann wall &
      through the Green's function &
      weak; bulk mesh must fit the wall &
      exact by images on a straight wall, no extra unknowns \\
    \hline
    Cost per degree of freedom &
      interface only; dense, singular quadrature &
      bulk mesh; step not resolution-bound &
      interface only; no bulk mesh or singular quadrature; dense $O(N^2)$ per explicit
      step \\
    \hline
  \end{tabular}}
  \caption{\fix{The families of numerical methods.
    The entries are structural properties of each method.}}
  \label{tab:method-comparison}
\end{table}

Our discretization is guided by a second principle: structure preservation.
Since the continuous Mullins--Sekerka flow preserves the area of each phase, even a slow numerical drift could be mistaken for physical mass transfer.
We therefore treat the
discrete area-conservation identities as hard constraints and project the reconstructed
velocity
onto the corresponding subspace, so that the discrete area fluxes vanish to machine
precision (velocity-level conservation, Section~\ref{sec:analysis}), independently of the
residual of the field solve; the residual drift of the polygonal areas after time
stepping is of higher order and is reported in Section~\ref{sec:experiments}. Two further
ingredients are handled in the same way. The discrete curvature that feeds the Gibbs--Thomson
condition is unreliable on the edges adjacent to a triple junction, where a naive
evaluation using the junction vertex fails to converge to the interface curvature and
biases the reconstructed velocity near the junctions; we correct it locally. And the
homogeneous Neumann condition on a straight wall
is imposed exactly by reflecting each charge in the wall, at the cost of no additional
unknowns.

{\color{revision}
The objective of this paper is to construct a bulk mesh-free, structure-preserving
numerical scheme for the multi-phase Mullins--Sekerka flow in the plane and in the
half-plane. More specifically, we aim (i) to build a CSM approximation for a network of
curves meeting at triple junctions in the whole plane; (ii) to extend it to a half-plane
bounded by a Neumann wall, along which contact points may move while the interfaces meet
the wall orthogonally; and (iii) to ensure that the discrete scheme respects the analytical
invariants of the continuum Mullins--Sekerka flow, in particular that the area of every
bounded phase is conserved exactly at the velocity level rather than up to a discretization
error.
}

The main contributions of this paper are the following.
\begin{itemize}
  \item {\color{revision}\textbf{Objective:} to approximate the multi-phase Mullins--Sekerka flow in
        the whole plane, including the conditions that hold at a triple junction.}\newline\newline
        {\color{revision}\textbf{Result:} a CSM scheme} with two-sided least-squares collocation of the
        Gibbs--Thomson and continuity
        conditions and a geometric treatment of the Herring--Young condition at triple
        junctions\fix{,} extended to a Neumann wall and
        orthogonal boundary contact.
  \item {\color{revision}\textbf{Objective:} to make the discrete flow inherit the exact area
        conservation of the continuous one.}\newline\newline
        {\color{revision}\textbf{Result:}} every bounded phase area \fix{is conserved} at the velocity
        level \fix{to machine precision},
        enforced by projection onto the null space of the discrete area constraints, with
        the accompanying discrete structure-preservation statements\fix{;}
        \fix{t}he approximate flow \fix{is validated against} an exact
        three-concentric-circle solution.
  \item {\color{revision}\textbf{Objective:} to let a shrinking region leave the computation without
        disturbing the junction network.}\newline\newline
        {\color{revision}\textbf{Result:}} a region enclosed by a single closed interface and incident to no
        junction, once it has shrunk below the mesh resolution, \fix{is} removed \fix{and}
        the phase-area constraints are re-baselined; the induced area change equals the removed
        residual\fix{, t}he junction connectivity is not altered, and no curve splicing or vertex
        surgery is involved.
\end{itemize}

\section{Problem formulation}\label{sec:formulation}
In this paper, we consider the multi-phase Mullins--Sekerka flow equation:
\begin{subequations}
    \label{system:all}
\begin{align}
    &\laplacian \bvp{w}(\cdot, t) = \bvp{0}\qquad&&\mbox{in}\quad\mathbb{R}^d\setminus\Gamma(t),\quad t > 0,\label{system:laplace}\\
    &\bvp{w}(\cdot, t)\cdot\jump{\bvp{\chi}} = \tension{}\curvature\qquad &&\mbox{on}\quad\Gamma(t),\quad t > 0,\label{system:gibbs-thomson}\\
    &\jump{\nabla\bvp{w}(\cdot,t)}\cdot\bv{\nu_\Gamma} = -V\jump{\bvp{\chi}} \qquad&&\mbox{on}\quad\Gamma(t),\quad t > 0,\label{system:motion}\\
    &\nabla\bvp{w}(\bv{x}, t) = O\left(\frac{1}{|\bv{x}|^2}\right) \qquad&&\mbox{as}\quad |\bv{x}|\to\infty,\quad t > 0,\label{system:neumann}\\
    &\sum_{\ell=1}^3\tension{\triplejunctionIndex{k}{\ell}}\bv{\mu}_{\triplejunctionIndex{k}{\ell}} = \bv{0}\qquad&&\mbox{on}\quad\triplejunction{k}(t),\quad 1\leq k\leq I_T,\quad t > 0,\label{system:young}\\
    &\jump{\bvp{w}(\cdot,t)} = \bvp{0}\qquad&&\mbox{on}\quad\Gamma(t),\quad t > 0,\label{system:continuity}\\
    &\Gamma(0) = \Gamma_0\label{system:initial},
\end{align}
\end{subequations}
where for each $t > 0$, $\bvp{w}(\cdot, t):\mathbb{R}^d\setminus\Gamma(t)\to\mathbb{R}^{I_P}\,(I_P\geq 2)$ is a vector-valued function of harmonic functions which indicates the chemical potential of each phase;
a curve network $\Gamma$ is composed of curves $\Gamma_i\,(1\leq i\leq I_S)$ with $I_S \geq 1$, and separates the ambient space $\mathbb{R}^d$ into one unbounded domain $\region{I_R}$ and several bounded regions $\region{1},\cdots,\region{I_R-1}$ with $I_R\geq 2$.
We assume that the chemical potentials $\bvp{w}(\cdot,t) = (w_1(\cdot,t),\cdots,w_{I_P}(\cdot,t))^\top$ satisfy the zero-sum condition in $\mathbb{R}^d\setminus\Gamma(t)$, i.e., $\sum_{p=1}^{I_P}w_p(\cdot,t) = 0$.
The function $\bvp{\chi}(\cdot,t):\mathbb{R}^d\setminus\Gamma(t)\to\{0,1\}^{I_P}$ denotes the
characteristic function of the phases; the symbols $\tension{}$ and $\curvature$ respectively denote the surface tension coefficient and the curvature of the curve $\Gamma$ in the direction of the unit normal vector $\bv{\nu}_\Gamma$,
and $\tension{}$ is assumed to be piecewise constant.
With this convention, $\curvature_i := \curvature\lfloor_{\Gamma_i}$ is negative when $\Gamma_i$ is convex (for instance, $\curvature_i < 0$ for a counter-clockwise convex closed curve).
Then, the condition \eqref{system:gibbs-thomson} encodes the Dirichlet condition, the so-called \textit{Gibbs--Thomson law}, to solve the Laplace equations \eqref{system:laplace};
the symbol $\jump{\cdot}$ denotes the jump of a quantity (allowed to be vector-valued) across a curve $\Gamma_i$ defined by
\begin{equation*}
    \jump{q}(\bv{x}) := \lim_{\varepsilon\to 0} \left\{q(\bv{x}+\varepsilon\bv{\nu}_{\Gamma_i}) - q(\bv{x}-\varepsilon\bv{\nu}_{\Gamma_i})\right\}\qquad\mbox{for}\quad\bv{x}\in\Gamma_i.
\end{equation*}
The symbol $V_i := V\lfloor_{\Gamma_i}$ denotes the normal velocity of $\Gamma_i$ in the direction $\bv{\nu}_{\Gamma_i}$ which is determined by the jump of the normal derivative of the two chemical potential functions \eqref{system:motion},
which correspond to the phases where the curve $\Gamma_i$ separates;
for each $1\leq k\leq I_T$, $(\triplejunctionIndex{k}{1},\triplejunctionIndex{k}{2},\triplejunctionIndex{k}{3})$ with $1\leq \triplejunctionIndex{k}{1}<\triplejunctionIndex{k}{2}<\triplejunctionIndex{k}{3}\leq I_S$
is the triplet of the curve indices at which $\Gamma_{\triplejunctionIndex{k}{1}}$, $\Gamma_{\triplejunctionIndex{k}{2}}$, and $\Gamma_{\triplejunctionIndex{k}{3}}$ compose the triple junction point $\triplejunction{k}(t)$.
A curve network without triple junctions is also possible and can be understood in the case $I_T = 0$.
The vector $\bv{\mu}_i\,(1\leq i\leq I_S)$ designates the co-normal vector field on the curve $\Gamma_i$, so it is tangential to $\Gamma_i$, at its endpoints;
the \fix{\textit{Herring--Young condition}} should be satisfied which is encoded by \eqref{system:young}.
The gradient of all chemical potentials is required to decay at the rate $1/|\bv{x}|^2$ as $|\bv{x}|\to\infty$ \eqref{system:neumann},
and this is an alternative to the pure Neumann boundary condition.
Each chemical potential should be continuous across the boundary \eqref{system:continuity}.
To close the system, the initial curve network $\Gamma_0$ is given in \eqref{system:initial}.

Throughout this paper, we restrict ourselves to the planar case $d=2$; accordingly $\mathbb{R}^d = \mathbb{R}^2$, $\dL{d} = \dL{2}$, and $\dH{d-1} = \dH{1}$ wherever the dimension-general notation appears below.
Moreover, for notation of vectors and vector-valued functions, we shall use the bold fonts and vector symbol to distinguish $\R{I_P}$-valued from $\R{2}$-valued.

\begin{rem}\label{rem:normal-invariant}
    It is easily seen that the system \eqref{system:all} is invariant with respect to a choice of the direction of $\bv{\nu}_{\Gamma}$.
    Indeed, if the sign of $\bv{\nu}_\Gamma$ changes, then so do $\jump{\chi}$, $\curvature$, $\jump{\nabla\bvp{w}}$, and $V$.
\end{rem}

\begin{rem}\label{rem:sumzero}
    The zero-sum condition on $\bvp{w}$ is necessary to guarantee the uniqueness of the solution to \eqref{system:all}.
    Indeed, if $\bvp{w}$ is a solution to the system, then $\bvp{w} + c(1,\dots,1)^\top,\,c\in\R{}$ also satisfies all equations since
    the Gibbs--Thomson law \eqref{system:gibbs-thomson} is described in terms of the difference of two chemical potentials.
    In the pure Neumann boundary problems, the zero-sum condition has been used to guarantee the uniqueness of a solution to a linear system in the literature of the parametric finite element method (see \cite[Theorem 4.1]{EGN24}).
\end{rem}
\begin{rem}\label{rem:cont}
    If $\bvp{w}(\cdot,t)\in W^{1,p}_{loc}(\mathbb{R}^2)^{I_P}$ for some $p > 2$,
    then the continuity condition \eqref{system:continuity} is automatically satisfied by the embedding $W^{1,p}(\Omega)\hookrightarrow C^{\alpha}(\closure{\Omega})$
    with $\alpha := 1 - 2/p$ for any bounded smooth domain $\Omega$ in $\mathbb{R}^2$ thanks to the Morrey theorem.
\end{rem}
\begin{rem}\label{rem:young}
The condition \eqref{system:young} stems from the force balance at the triple junctions.
\fix{In this paper, we call this the
\textit{Herring--Young condition} throughout.}
To ensure this, we require that for every $k\in\naturalset{I_T}$,
\begin{equation*}
    \tension{\triplejunctionIndex{k}{1}} \leq \tension{\triplejunctionIndex{k}{2}} + \tension{\triplejunctionIndex{k}{3}},\quad
    \tension{\triplejunctionIndex{k}{2}} \leq \tension{\triplejunctionIndex{k}{3}} + \tension{\triplejunctionIndex{k}{1}},\quad\mbox{and}\quad
    \tension{\triplejunctionIndex{k}{3}} \leq \tension{\triplejunctionIndex{k}{1}} + \tension{\triplejunctionIndex{k}{2}}.
\end{equation*}
These three inequalities are exactly the solvability condition for the force balance \eqref{system:young}.
They are necessary and sufficient for the three surface-tension vectors to close into a triangle, hence for an equilibrium set of junction angles to exist.
If $\theta_{ab}$ denotes the angle formed at a junction between the interfaces with
surface tensions $\tension{a}$ and $\tension{b}$, opposite the interface with tension
$\tension{c}$, then
\begin{equation*}
    \cos\theta_{ab} = \frac{\tension{c}^2-\tension{a}^2-\tension{b}^2}{2\,\tension{a}\tension{b}}.
\end{equation*}
In particular, all three junction angles are $120^\circ$ when the surface tensions are equal.
\end{rem}
\begin{rem}\label{rem:wellposed}
For the multi-phase Mullins--Sekerka flow, a global weak solution has been established by
Bronsard, Garcke and Stoth \cite{BGS98} through an implicit time discretization;
well-posedness of strong solutions in the presence of triple junctions and boundary contacts
remains open. This paper concerns a numerical approximation of the flow, not its
well-posedness. In the two-phase case, a global weak solution (BV solution)
has been established by Luckhaus and Sturzenhecker \cite{LS95} under a no mass-loss assumption on discrete solutions constructed by a minimizing movement scheme.
This assumption has been removed by R\"oger \cite{R05} by means of the notion of \textit{varifolds}.
For well-posedness of the two-phase Mullins--Sekerka problem in an unbounded domain in $\R{2}$,
we refer the reader to Escher, Matioc, and Matioc \cite{EMM24}.
\end{rem}

\section{Assumptions on curve network}\label{sec:network}
In this section, we introduce the mathematical notation and assumptions for the curve network.
For $K\in\mathbb N$, let $\naturalset{K}:= \{1,\cdots,K\}$, and we use the convention that $\naturalset{0} := \emptyset$.

\begin{asm}[Curve network]
    The curve network $\Gamma$ is a piecewise parametrized curve which separates the whole space $\mathbb{R}^2$ into regions $\region{1},\cdots,\region{I_R}\,(I_R\geq 2)$.
The curve network $\Gamma$ is composed of the parametrized curves $\Gamma_1,\cdots,\Gamma_{I_S}\,(I_S\geq 1)$,
and each curve is automatically oriented by the map $\Gamma_i:(0,1)\to\mathbb{R}^2$.
Throughout this paper, the direction of the unit normal vector $\normal{i}{j}$
is supposed to be so that it points to the right-hand side of the curve.
Namely, we assume that
\begin{equation*}
    \Gamma = \bigcup_{i=1}^{I_S}\Gamma_i\qquad\mbox{and}\qquad\mathbb{R}^d = \bigcup_{r=1}^{I_R}\Omega_r\cup\Gamma,\quad\Omega_{r_1}\cap\Omega_{r_2} = \emptyset\quad\mbox{for}\quad r_1\neq r_2.
\end{equation*}
In particular, we assume that the outermost region is $\Omega_{I_R}$, and this region is supposed to be unbounded.
The curve network $\Gamma$ is not necessarily connected. In other words, it possibly contains several disconnected components.
Each curve in the curve network $\Gamma$ is supposed to be either an open curve or a closed curve.
Here, $\Gamma_i$ is said to be open (resp. closed) if $\Gamma_i(0) \neq \Gamma_i(1)$ (resp. $\Gamma_i(0) = \Gamma_i(1)$).
Moreover, we assume that all curves in $\Gamma$ do not have self intersections, i.e., $\Gamma_i(t_1) = \Gamma_i(t_2)$ implies $t_1 = t_2$ unless $\Gamma_i$ is closed with $t_1 = 0$ and $t_2 = 1$.
We also assume that no curve intersects another curve in $\Gamma$ except at triple junctions.
We note that each closed curve in $\Gamma$ can only enclose a single region. On the other hand,
with open curves, we need several curves to enclose a region.

The curve network $\Gamma$ is supposed to have triple junctions $\triplejunction{1},\cdots,\triplejunction{I_T}\,(I_T\geq 0)$.
Each triple junction $\triplejunction{k}$ is the meeting point of three curves $\Gamma_{\triplejunctionIndex{k}{1}}$, $\Gamma_{\triplejunctionIndex{k}{2}}$, 
and $\Gamma_{\triplejunctionIndex{k}{3}}$ with $1\leq \triplejunctionIndex{k}{1}<\triplejunctionIndex{k}{2}<\triplejunctionIndex{k}{3}\leq I_S$.
We note that a curve network having no triple junctions is also considered in our paper;
in this case, we suppose that the Herring--Young condition \eqref{system:young} is trivially satisfied.
\end{asm}

To describe a curve network precisely in a mathematical manner,
we now introduce mappings which relate indices of curves, regions and phases.

\begin{asm}[Region to phase map $\regionPhaseMap$]
Any region is assigned to one phase.
To describe this, we introduce a map $\regionPhaseMap: \naturalset{I_R}\to\naturalset{I_P}$ such that the region $\domain{r}$ is occupied by the phase $\regionPhaseMap(r)$.
Conversely, the $p$-th phase is composed of the regions $\domain{r}$ with $r\in \regionPhaseMap^{-1}(p)$.
We assume that the map $\regionPhaseMap$ is surjective. In other words, all phases are composed of at least one region.
According to the assumption on $\region{I_R}$, we require that this outer region belongs to the $I_P$-th phase, namely $\regionPhaseMap(I_R) = I_P$.
In other words, the $I_P$-th phase is the unique phase containing the unbounded region; it may in addition own bounded regions.
\end{asm}

\begin{asm}[Curve to region map $\curveRegionMap$]
Any curve is supposed to separate exactly two regions.
To prescribe this, we introduce the \textit{Curve to Region map} which defines the correspondence between the curve and the region.
Precisely speaking, it is a map $\curveRegionMap: \naturalset{I_S}\to\naturalset{I_R}^2$
such that $\curveRegionMap(s) = \left(\regionindexm{s},\regionindexp{s}\right)$ means that the curve $\Gamma_s$ lies between the regions $\Omega_{\regionindexm{s}}$ and $\Omega_{\regionindexp{s}}$.
Moreover, this relation stresses that the normal vector $\bv{\nu}_{\Gamma_s}$ points from $\Omega_{\regionindexm{s}}$ to $\Omega_{\regionindexp{s}}$.
Conversely, using $\curveRegionMap$, for each $r\in\naturalset{I_R}$, the curves which enclose the $r$-th domain can be identified by the following index set:
\[
\left\{s\in\naturalset{I_S}\biggm| r = \regionindexp{s}\quad\text{or}\quad r = \regionindexm{s}\right\}.
\]
\end{asm}

\begin{asm}[Curve to phase map $\curvePhaseMap$]
We compose the maps $\curveRegionMap$ and $\regionPhaseMap$ to define the \textit{Curve to Phase map} $\curvePhaseMap: \naturalset{I_S}\to\naturalset{I_P}^2$ and write as
$\curvePhaseMap(s) = (\phaseindexm{s},\phaseindexp{s})$ for $s\in\naturalset{I_S}$. Using this notation, we have $(\phaseindexm{s}, \phaseindexp{s}) = (\regionPhaseMap(\regionindexm{s}),\regionPhaseMap(\regionindexp{s}))$.
We stress that $\phaseindexm{s}\neq \phaseindexp{s}$ for all $s\in\naturalset{I_S}$ so that any two adjacent regions must belong to different phases.

Conversely, using $\curvePhaseMap$, for each $p\in\naturalset{I_P}$, the curves which enclose the $p$-th phase can be identified by the following index set:
\[
\left\{s\in\naturalset{I_S}\biggm| p = \phaseindexp{s}\quad\text{or}\quad p = \phaseindexm{s}\right\}.
\]
\end{asm}

We show a representative curve network in Figure~\ref{fig:network-schematic} composed of
three open curves $\Gamma_i\,(i=1,\,2,\,3)$ meeting at the triple junctions $\triplejunction{1}$ and $\triplejunction{2}$
and one closed curve $\Gamma_4$. The curves separate the plane into the bounded regions $\region{r}\,(r=1,\,2,\,3)$, and the
unbounded exterior region $\region{4}$. We note that, in the displayed case,
the region-to-phase map $\regionPhaseMap$ is non-injective:
phase $2$ occupies the two disconnected regions $\region{2}$ and $\region{3}$ (same color),
and $\regionPhaseMap^{-1}(2)=\{2,3\}$. For the curve $\Gamma_3$, the curve-to-region map gives
$\curveRegionMap(3)=(\regionindexm{3},\regionindexp{3})=(1,2)$, and the unit normal
$\bv{\nu}_{\Gamma_3}$ points from $\region{1}$ to $\region{2}$.

\begin{figure}[H]
  \centering
  \begin{tikzpicture}[>={Stealth[length=2mm]}, line join=round, scale=1.1]
    \definecolor{phaseAcol}{RGB}{230,159,0}   
    \definecolor{phaseBcol}{RGB}{86,180,233}  
    \definecolor{phaseCcol}{RGB}{0,158,115}   
    \coordinate (T1) at (0,1.05);
    \coordinate (T2) at (0,-1.05);
    \fill[phaseCcol!14] (-3.05,-1.95) rectangle (5.65,1.95);
    \fill[phaseAcol!35] (T1) to[out=150,in=210,looseness=3.5] (T2) to[out=90,in=-90,looseness=0.8] (T1);  
    \fill[phaseBcol!35] (T1) to[out=30,in=330,looseness=3.5]  (T2) to[out=90,in=-90,looseness=0.8] (T1);  
    \fill[phaseBcol!35] (4.0,0) circle (1.0);                                                             
    \draw[thick] (T1) to[out=150,in=210,looseness=3.5] (T2);  
    \draw[thick] (T1) to[out=30,in=330,looseness=3.5]  (T2);  
    \draw[thick] (T1) to[out=-90,in=90,looseness=0.8] (T2);   
    \draw[thick] (4.0,0) circle (1.0);                        
    \foreach \p in {T1,T2}{\fill[white] (\p) circle (2.7pt); \fill (\p) circle (1.7pt);}
    \draw[->,black,thick] (0,0.30) -- (0.60,0.30) node[above,black,inner sep=1pt] {$\bv{\nu}_{\Gamma_3}$};
    \path (T1) to[out=150,in=210,looseness=3.5] node[pos=0.5,inner sep=0,outer sep=0](gL){} (T2);
    \path (T1) to[out=30,in=330,looseness=3.5]  node[pos=0.5,inner sep=0,outer sep=0](gR){} (T2);
    \draw[->,black,thick] (gL) -- ($(gL)+(-0.5,0)$) node[left,inner sep=1.5pt]  {$\bv{\nu}_{\Gamma_1}$};
    \draw[->,black,thick] (gR) -- ($(gR)+(0.5,0)$)  node[right,inner sep=1.5pt] {$\bv{\nu}_{\Gamma_2}$};
    \draw[->,black,thick] ($(4,0)+(-45:1)$) -- ($(4,0)+(-45:1.46)$) node[below right,inner sep=1pt] {$\bv{\nu}_{\Gamma_4}$};
    \node at (-1.20,-0.05) {$\region{1}$};
    \node at (1.20,-0.05)  {$\region{2}$};
    \node at (4.0,0)   {$\region{3}$};
    \node at (2.55,1.35) {$\region{4}$};
    \node[above=1pt] at (T1) {$\triplejunction{1}$};
    \node[below=1pt] at (T2) {$\triplejunction{2}$};
    \node[left] at (-1.72,0.62) {$\Gamma_1$};
    \node[right] at (1.72,0.62)  {$\Gamma_2$};
    \node[right,inner sep=1.5pt] at (0.05,-0.62) {$\Gamma_3$};
    \node[above] at (4.0,1.0) {$\Gamma_4$};
    \begin{scope}[shift={(-3.05,-2.70)}]
      \fill[phaseAcol!35,draw=black!45] (0,0) rectangle (0.34,0.24);
      \node[right,inner sep=2.5pt] at (0.34,0.12) {\footnotesize\color{revision} phase $1$};
      \fill[phaseBcol!35,draw=black!45] (2.10,0) rectangle (2.44,0.24);
      \node[right,inner sep=2.5pt] at (2.44,0.12) {\footnotesize\color{revision} phase $2$};
      \fill[phaseCcol!14,draw=black!45] (4.20,0) rectangle (4.54,0.24);
      \node[right,inner sep=2.5pt] at (4.54,0.12) {\footnotesize\color{revision} phase $3$};
    \end{scope}
  \end{tikzpicture}
  \par\smallskip
  {\color{revision}\footnotesize
  \begin{tabular}{r|cccc}
    $i\in\naturalset{I_S}$ & $1$ & $2$ & $3$ & $4$ \\
    \hline
    $\curveRegionMap(i)=(\regionindexm{i},\regionindexp{i})$ & $(1,4)$ & $(2,4)$ & $(1,2)$ & $(3,4)$ \\
    $\curvePhaseMap(i)=(\phaseindexm{i},\phaseindexp{i})$ & $(1,3)$ & $(2,3)$ & $(1,2)$ & $(2,3)$ \\
  \end{tabular}\qquad
  \begin{tabular}{r|ccc}
    $p\in\naturalset{I_P}$ & $1$ & $2$ & $3$ \\
    \hline
    $\regionPhaseMap^{-1}(p)$ & $\{1\}$ & $\{2,3\}$ & $\{4\}$ \\
  \end{tabular}}
  \caption{A curve network $\Gamma$ in the case $I_P = 3$, $I_R = 4$, $I_S = 4$, and $I_T=2$.
    \fix{The tables list every value of the maps
    $\curveRegionMap$, $\curvePhaseMap$, and $\regionPhaseMap^{-1}$ for this network.}}
  \label{fig:network-schematic}
\end{figure}

\section{Properties of classical solutions}\label{sec:classical}
In this section, we show two important properties of classical solutions to the system \eqref{system:all}.
\fix{Throughout this section, for a region $\mathcal{R}\subset\R{d}$ with a piecewise smooth boundary, we write $\bv{\nu}_{\mathcal{R}}$ for the outward unit normal vector field of $\partial\mathcal{R}$.}
We begin with the curve shortening property.
We can find a similar statement in \cite[Proposition 2.1]{EGN24}, although
we need some work to extend the proof to the case of the whole space $\mathbb{R}^d$.
\begin{prop}\label{prop:csp}
Assume that $(\bvp{w}(\cdot, t),\Gamma(t))$ is a smooth solution to the system \eqref{system:all}.
Assume further that the Dirichlet energy is finite, i.e., $\nabla\bvp{w}(\cdot,t)\in L^{2}(\mathbb{R}^d)^{I_P\times d}$ for a.e. $t > 0$.
Then, the length of the curve $\Gamma(t)$ is non-increasing in time. Precisely speaking, we have
\begin{equation*}
    \frac{d}{dt}\sum_{i=1}^{I_S}\tension{i}|\Gamma_i(t)| \leq -\int_{\mathbb{R}^2}|\nabla\bvp{w}(\cdot,t)|^2\,\dL{d}\qquad\mbox{for}\quad t > 0.
\end{equation*}
\end{prop}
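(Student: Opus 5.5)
The plan is to compute $\frac{d}{dt}\sum_i\tension{i}|\Gamma_i(t)|$ by the first-variation formula for curve networks and then trade the curvature term for a Dirichlet energy via the Gibbs--Thomson law \eqref{system:gibbs-thomson} and the flux condition \eqref{system:motion}. For each curve,
\[
\frac{d}{dt}|\Gamma_i(t)| = -\int_{\Gamma_i}\curvature_i V_i\,\dH{1} + \big[\text{endpoint terms}\big],
\]
with the sign fixed by the paper's convention that $\curvature$ is taken in the direction of $\bv{\nu}_\Gamma$. Summing with weights $\tension{i}$, the endpoint terms at each $\triplejunction{k}$ collect into $\big(\sum_{\ell}\tension{\triplejunctionIndex{k}{\ell}}\bv{\mu}_{\triplejunctionIndex{k}{\ell}}\big)\cdot\frac{d}{dt}\triplejunction{k}(t)$. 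This vanishes by the Herring--Young condition \eqref{system:young}, and closed curves contribute no endpoint terms. Hence
\[
\frac{d}{dt}\sum_i\tension{i}|\Gamma_i| = -\sum_i\int_{\Gamma_i}\tension{i}\curvature_i V_i\,\dH{1}.
\]
Next, I write $\tension{i}\curvature_i = \bvp{w}\cdot\jump{\bvp{\chi}}$, and on $\Gamma_i$ I use $V_i\jump{\bvp{\chi}} = -\jump{\nabla\bvp{w}}\cdot\bv{\nu}_{\Gamma_i}$. Since $\jump{\bvp{\chi}}$ has entries $\pm1,0$, one has $|\jump{\bvp{\chi}}|^2=2$. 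A cleaner route is to note that $V_i\,\bvp{w}\cdot\jump{\bvp{\chi}} = \bvp{w}\cdot(V_i\jump{\bvp{\chi}}) = -\bvp{w}\cdot\jump{\nabla\bvp{w}}\,\bv{\nu}_{\Gamma_i}$, which avoids the factor. The integrand thus becomes $\bvp{w}\cdot\jump{\partial_{\nu}\bvp{w}}$, with $\bvp{w}$ single-valued by \eqref{system:continuity}, so
\[
\frac{d}{dt}\sum_i\tension{i}|\Gamma_i| = \sum_i\int_{\Gamma_i}\bvp{w}\cdot\jump{\nabla\bvp{w}}\cdot\bv{\nu}_{\Gamma_i}\,\dH{1}.
\]

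Then I integrate by parts region by region. For a bounded region $\region{r}$, Green's identity with $\laplacian\bvp{w}=\bvp{0}$ gives $\int_{\region{r}}|\nabla\bvp{w}|^2 = \int_{\partial\region{r}}\bvp{w}\cdot\partial_{\bv{\nu}_{\region{r}}}\bvp{w}$. Summing over regions, each curve is seen from both sides with opposite outward normals, which reassembles $-\int_{\Gamma_i}\bvp{w}\cdot\jump{\nabla\bvp{w}}\bv{\nu}_{\Gamma_i}$. The sign needs checking: the "+" side has outward normal $-\bv{\nu}$. Triple junctions are sets of measure zero, so they cause no trouble provided $\bvp{w}$ is smooth up to the boundary of each region, which is the smoothness hypothesis.

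The main obstacle is the unbounded region $\region{I_R}$, and this is where the whole-space extension requires work. I will truncate with $B_R$ and apply Green's identity on $\region{I_R}\cap B_R$, which produces the extra term $\int_{\partial B_R}\bvp{w}\cdot\partial_r\bvp{w}\,\dH{1}$. To control it I first show that $\bvp{w}$ has a limit at infinity. Since $\nabla\bvp{w}=O(|x|^{-2})$ by \eqref{system:neumann}, integrating along rays and arcs shows $\bvp{w}(x)\to\bvp{w}_\infty$ with $|\bvp{w}-\bvp{w}_\infty|=O(|x|^{-1})$. I then split
\[
\int_{\partial B_R}\bvp{w}\cdot\partial_r\bvp{w} = \int_{\partial B_R}(\bvp{w}-\bvp{w}_\infty)\cdot\partial_r\bvp{w} + \bvp{w}_\infty\cdot\int_{\partial B_R}\partial_r\bvp{w}.
\]
The first term is $O(R\cdot R^{-1}R^{-2})\to0$. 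The second equals $\bvp{w}_\infty\cdot\big(\text{total flux}\big)$, and the total flux is independent of $R$ by harmonicity. It equals the sum of the normal-derivative jumps over $\Gamma$, which is $-\int_\Gamma V\jump{\bvp{\chi}}$, i.e.\ the rate of change of the bounded phase areas. Its decay bound $O(R\cdot R^{-2})\to0$ forces the flux to be zero anyway.

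Letting $R\to\infty$ and using $\nabla\bvp{w}\in L^2$ (monotone convergence) yields equality with $-\int_{\mathbb{R}^2}|\nabla\bvp{w}|^2$, so the stated inequality holds. If the decay hypotheses cannot be used at a particular $t$, I would fall back on choosing a sequence $R_n$ along which the boundary term has nonpositive limit, which gives only the inequality.
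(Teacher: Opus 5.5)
Your proposal is correct and follows essentially the paper's route. The first variation together with \eqref{system:gibbs-thomson} and \eqref{system:motion} yields $\sum_i\int_{\Gamma_i}\bvp{w}\cdot\jump{\nabla\bvp{w}}\bv{\nu}_i\,\dH{1}$, Green's identity on each region (with $\region{I_R}$ truncated by $B_R(0)$) converts this into $-\int_{B_R(0)}|\nabla\bvp{w}|^2\,\dL{2}$ plus a term on $\partial B_R(0)$, and that term is $O(1/R)$ because $\bvp{w}$ is bounded and $|\nabla\bvp{w}|=O(R^{-2})$. You are more explicit than the paper in cancelling the junction endpoint terms through \eqref{system:young} and in proving the boundedness of $\bvp{w}$ yourself (the paper cites \cite{E24}). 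Your splitting off of $\bvp{w}_\infty$ and the zero-flux argument are unnecessary, since $\sup|\bvp{w}|\cdot 2\pi R\cdot C R^{-2}\to 0$ already disposes of the boundary term.
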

\begin{proof}
We take $R>0$ so large that $\Gamma_i(t)\subset B_R(0)$ \fix{for every $i\in\naturalset{I_S}$}. Since the curvature is the first variation of the length of the curve, we have
\begin{multline}\label{eq:csp1}
    \frac{d}{dt}\sum_{i=1}^{I_S}\tension{i}|\Gamma_i(t)| = \sum_{i=1}^{I_S}\int_{\Gamma_i(t)}\fix{-}V_i\tension{i}\curvature_i\,\dH{d-1} = \sum_{i=1}^{I_S}\int_{\Gamma_i(t)}\fix{-}V_i(\bvp{w}\cdot\jump{\bvp{\chi}})\,\dH{d-1}\\
    =\sum_{i=1}^{I_S}\int_{\Gamma_i(t)}\bvp{w}\cdot(\fix{-}V_i\jump{\bvp{\chi}})\,\dH{d-1} = \sum_{i=1}^{I_S}\int_{\Gamma_i(t)} \bvp{w}\cdot \jump{\nabla\bvp{w}}\bv{\nu}_i\,\dH{d-1}.
\end{multline}
For each $p\in\naturalset{I_P}$, let $\smallChemical{r}{p} := \smallChemicalNoRegion{p}\lfloor_{\region{r}}$ for $r\in\regionPhaseMap^{-1}(p)$.
Then, we have
\begin{multline}\label{eq:csp2}
    -\int_{B_R(0)}|\nabla \smallChemicalNoRegion{p}|^2\,\dL{d} = -\sum_{r=1}^{I_R-1} \int_{\domain{r}} \left|\nabla \smallChemical{r}{p}\right|^2\,\dL{d} - \int_{B_R(0)\cap\domain{I_R}} \left|\nabla \smallChemical{I_R}{p}\right|^2\,\dL{d}\\
    = - \sum_{r = 1}^{I_R} \int_{\partial\Omega_{r}} \smallChemical{r}{p}\nabla\smallChemical{r}{p}\cdot\bv{\nu}_{\Omega_r}\,\dH{d-1}  - \int_{\partial B_R(0)} \smallChemical{I_R}{p}\nabla\smallChemical{I_R}{p}\cdot\fix{\bv{\nu}_{B_R(0)}}\,\dH{d-1}\\
    = \sum_{i=1}^{I_S} \int_{\Gamma_i(t)} \smallChemicalNoRegion{p}\cdot\jump{\nabla\smallChemicalNoRegion{p}}\bv{\nu}_i\,\dH{d-1}  - \int_{\partial B_R(0)} \smallChemical{I_R}{p}\nabla\smallChemical{I_R}{p}\cdot\fix{\bv{\nu}_{B_R(0)}}\,\dH{d-1}.
\end{multline}
Summing up \eqref{eq:csp2} over $p\in\naturalset{I_P}$, we obtain
\begin{equation}\label{eq:csp3}
    \begin{aligned}
    -\int_{B_R(0)}|\nabla\bvp{w}(\cdot,t)|^2\,\dL{\fix{d}} &= \sum_{i=1}^{I_S}\int_{\Gamma_i(t)}\bvp{\smallChemicalNoRegion{}}\cdot\jump{\nabla\bvp{\smallChemicalNoRegion{}}}\bv{\nu}_i\,\dH{d-1}\\
    &\quad - \sum_{p=1}^{I_P}\int_{\partial B_R(0)}\smallChemical{I_R}{p}\nabla\smallChemical{I_R}{p}\cdot\fix{\bv{\nu}_{B_R(0)}}\,\dH{d-1}.
    \end{aligned}
\end{equation}
Here, we have invoked the fact that $\laplacian\smallChemical{r}{p} = 0$ in $\domain{r}$ to obtain the last equality.
We recall from \cite[Lemma 3]{E24} that $\smallChemicalNoRegion{p}$ is bounded thanks to the assumption that its gradient is $O(1/|\bv{x}|^2)$ as $|\bv{x}|\to\infty$.
Hence, we can estimate the second term on the right-hand side of the above equality as follows:
\begin{multline}\label{eq:csp4}
    \left|\int_{\partial B_R(0)} \smallChemical{I_R}{p}\nabla\smallChemical{I_R}{p}\cdot\fix{\bv{\nu}_{B_R(0)}}\,\dH{d-1}\right| \leq C_1 \int_{\partial B_R(0)} \left|\nabla\smallChemical{I_R}{p}\right|\,\dH{d-1}\\
    \leq C_1\cdot\frac{C_2}{R^2}\int_{\partial B_R(0)}\,\dH{1} \leq \frac{2\pi C_1C_2}{R},
\end{multline}
where $C_1$ and $C_2$ are positive constants such that $|\smallChemicalNoRegion{p}|\leq C_1$ and $|\nabla\smallChemicalNoRegion{p}(\bv{x})|\leq C_2 / |\bv{x}|^2$ for all $\bv{x}\in\mathbb{R}^2$ with $|\bv{x}|\geq R$.
We now combine \eqref{eq:csp1}, \eqref{eq:csp3}, and \eqref{eq:csp4} to obtain
\begin{equation*}
    \frac{d}{dt}\sum_{i=1}^{I_S}\tension{i}|\Gamma_i(t)| \leq -\int_{B_R(0)}|\nabla\bvp{w}(\cdot,t)|^2\,\dL{\fix{d}} + \frac{2\pi C_1C_2 I_P}{R}.
\end{equation*}
Sending $R\to \infty$, we obtain the desired inequality.
\end{proof}

Next, we provide a proof of the area-preserving property.
Again, this property is shown in \cite[Proposition 2.2]{EGN24}, although the proof is carried out in a specific three-phase case.
\begin{prop}\label{prop:ap}
    Assume that $(\bvp{w}(\cdot,t),\Gamma(t))$ is a smooth solution to the system \eqref{system:all}.
    Then, the area of each phase, except for the $I_P$-th phase, is preserved in time. Namely, it holds that
    \begin{equation*}
        \frac{d}{dt}\sum_{r\in \regionPhaseMap^{-1}(p)}|\Omega_r(t)| = 0\qquad\text{for all}\quad t > 0\quad\text{and}\quad p\in\naturalset{I_P-1}.
    \end{equation*}
\end{prop}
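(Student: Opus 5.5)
Fix $p\in\naturalset{I_P-1}$. The plan is to write the rate of change of the phase area as a boundary flux of the normal velocity, convert that velocity into the jump of $\nabla w_p$ through the motion law \eqref{system:motion}, and then apply the divergence theorem on each region of phase $p$. Since $p\neq I_P$, every region in $\regionPhaseMap^{-1}(p)$ is bounded. This is why no truncation by $B_R(0)$ is needed, in contrast with Proposition~\ref{prop:csp}.

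\textbf{Step 1 (area as a boundary flux).} By the transport theorem for a bounded region with piecewise smooth moving boundary,
\[
\frac{d}{dt}|\Omega_r(t)| = \int_{\partial\Omega_r}\mathcal V_r\,\dH{1},
\]
where $\mathcal V_r$ is the outward normal velocity of $\partial\Omega_r$. Triple junctions form a finite set, so they have $\mathcal H^1$-measure zero and contribute nothing. On a curve $\Gamma_s$ with $r=\regionindexm{s}$, the normal $\bv\nu_s$ points out of $\Omega_r$, so $\mathcal V_r=V_s$. If instead $r=\regionindexp{s}$, then $\mathcal V_r=-V_s$.

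\textbf{Step 2 (use the motion law).} Summing over $r\in\regionPhaseMap^{-1}(p)$ and grouping by curves gives
\[
\frac{d}{dt}\sum_{r}|\Omega_r| = \sum_{s=1}^{I_S}\int_{\Gamma_s}V_s\bigl(\delta_{p,\phaseindexm{s}}-\delta_{p,\phaseindexp{s}}\bigr)\,\dH{1}.
\]
Since $\bv\nu_s$ points into $\Omega_{\regionindexp{s}}$, the $p$-th component of the phase jump is $\jump{\chi_p}=\delta_{p,\phaseindexp{s}}-\delta_{p,\phaseindexm{s}}$. The integrand is therefore $-V_s\jump{\chi_p}$, which by \eqref{system:motion} equals $\jump{\nabla w_p}\cdot\bv\nu_s$. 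Curves not adjacent to phase $p$ have $\jump{\chi_p}=0$ and drop out, as they should.

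\textbf{Step 3 (divergence theorem).} We now undo the jump, exactly as in \eqref{eq:csp2}, but now with the integrand $\nabla\smallChemical{r}{p}\cdot\bv\nu_{\Omega_r}$ instead of $\smallChemical{r}{p}\nabla\smallChemical{r}{p}\cdot\bv\nu_{\Omega_r}$. This gives
\[
\sum_s\int_{\Gamma_s}\jump{\nabla w_p}\cdot\bv\nu_s\,\dH{1} = -\sum_{r=1}^{I_R}\int_{\partial\Omega_r}\nabla\smallChemical{r}{p}\cdot\bv\nu_{\Omega_r}\,\dH{1}.
\]
The difficulty is the unbounded region $\Omega_{I_R}$, which does appear in this sum because $w_p$ is defined everywhere.

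To handle it, I would first try splitting $\nabla w_p$ into its one-sided traces. Then $\int_{\Gamma}\jump{\nabla w_p}\cdot\bv\nu$ is a sum of outward fluxes of $\smallChemical{r}{p}$ over all regions $r$, including $r\in\regionPhaseMap^{-1}(p)$ and all other regions. For each bounded $r$, the flux $\int_{\partial\Omega_r}\nabla\smallChemical{r}{p}\cdot\bv\nu_{\Omega_r}$ equals $\int_{\Omega_r}\laplacian\smallChemical{r}{p}=0$.

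For $\Omega_{I_R}$, apply the divergence theorem on $B_R(0)\cap\Omega_{I_R}$. This leaves the term $\int_{\partial B_R}\nabla\smallChemical{I_R}{p}\cdot\bv\nu_{B_R}\,\dH{1}$, which is independent of $R$ and bounded by $2\pi R\cdot C_2/R^2\to0$ using \eqref{system:neumann}, as in \eqref{eq:csp4}. Hence this flux vanishes too, and the total is zero.

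\textbf{Main obstacle.} The main obstacle is this exterior flux. Decay of order $|\bv x|^{-2}$ is precisely what kills a logarithmic (net charge) mode. I would also check at the junctions that the piecewise smooth divergence theorem applies, which only requires integrability of $\nabla w_p$ near the junctions; this is granted by smoothness.

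The argument breaks down for $p=I_P$ only because $\Omega_{I_R}$ has infinite area. In that case the corresponding "area" statement is vacuous, which is why the proposition excludes it.
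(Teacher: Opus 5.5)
Your proof is correct and is essentially the paper's argument. You use the divergence theorem on each bounded region and on $\Omega_{I_R}\cap B_R(0)$, the curve/region bookkeeping that turns the outward fluxes into $-\sum_i\int_{\Gamma_i}\jump{\nabla w_p}\cdot\bv{\nu}_i\,\dH{1}$, the motion law, and the $O(|\bv{x}|^{-2})$ decay to kill the flux through $\partial B_R(0)$; you only run the chain starting from the area rate instead of from $\laplacian w_p=0$. The one misleading point is your opening remark that no truncation by $B_R(0)$ is needed: as your Step 3 shows, the exterior region enters through the jump, and truncation plus decay is exactly what makes its flux vanish.
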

\begin{proof}
    Take $R > 0$ so large that $\Gamma\subset B_R(0)$. We deduce from the Laplace equation \eqref{system:laplace} that
    \begin{align}
        \label{eq:ap-1}
        0 &= \int_{\region{I_R}\cap B_R(0)}\laplacian\smallChemical{I_R}{p}\dL{2} + \sum_{r = 1}^{I_R-1}\int_{\region{r}}\laplacian\smallChemical{r}{p}\dL{2}\notag\\
        &= \int_{\partial B_R(0)} \nabla \smallChemical{I_R}{p}\cdot\fix{\bv{\nu}_{B_R(0)}}\dH{1} + \sum_{r = 1}^{I_R}\int_{\partial \region{r}}\nabla\smallChemical{r}{p}\cdot\fix{\bv{\nu}_{\region{r}}}\dH{1},
    \end{align}
    We encode the second term in terms of the index $i\in\naturalset{I_S}$.
    \begin{align}
        \label{eq:ap-2}
        \sum_{r = 1}^{I_R}\int_{\partial\region{r}}\nabla\smallChemical{r}{p}\cdot\fix{\bv{\nu}_{\region{r}}}\dH{1}
        &= \sum_{r=1}^{I_R}\left(\sum_{\substack{i\in\naturalset{I_S}\\r = \regionindexm{i}}}\int_{\Gamma_i}\nabla\smallChemical{r}{p}\cdot\bv{\nu}_i\dH{1} + \sum_{\substack{j\in\naturalset{I_S}\\r = \regionindexp{j}}}\int_{\Gamma_j}\nabla\smallChemical{r}{p}\cdot(-\bv{\nu}_j)\dH{1}\right)\notag\\
        &=\sum_{i=1}^{I_S}\left(\sum_{\substack{r\in\naturalset{I_R}\\r = \regionindexm{i}}}\int_{\Gamma_i}\nabla\smallChemical{r}{p}\cdot\bv{\nu}_i\dH{1} + \sum_{\substack{s\in\naturalset{I_R}\\ s=\regionindexp{i}}}\int_{\Gamma_i}\nabla\smallChemical{s}{p}\cdot(-\bv{\nu}_i)\dH{1}\right)\notag\\
        &=\sum_{i=1}^{I_S}\int_{\Gamma_i}-\jump{\nabla w_p}\bv{\nu}_i\dH{1} = \sum_{i=1}^{I_S}\int_{\Gamma_i}V_i\jump{\chi_p}\dH{1}.
    \end{align}
    Here, we have invoked the motion law \eqref{system:motion} to obtain the last equality.
    We observe that for each $r\in\regionPhaseMap^{-1}(p)$, $V_i\jump{\chi_p}$ corresponds to the inward normal velocity of $\Gamma_i$.
    Therefore, combining \eqref{eq:ap-1} and \eqref{eq:ap-2} together with the decay condition \eqref{system:neumann},
    we deduce that
    \begin{equation*}
        -\frac{d}{dt}\sum_{r\in\regionPhaseMap^{-1}(p)}|\region{r}(t)| + O\left(\frac{1}{R}\right) = 0\qquad\text{as}\quad R\to\infty.
    \end{equation*}
    This concludes the proof.
\end{proof}

\section{Spatial discretization}
\label{sec:spatial_discretization}

To approximate the parametrized curves $\Gamma_i:[0,1]\to\mathbb R^2\,(i\in\naturalset{I_S})$ in a curve network $\Gamma$, we follow the strategy employed in \cite{E24}.
Namely, each curve $\Gamma_i$ is approximated by a polygonal curve $\polygon{i}$ by using ordered vertices $\vertex{i}{1},\cdots,\vertex{i}{\vertexNum{i}}$ defined by

\begin{equation*}
    \vertex{i}{j} :=
    \begin{cases}
       \Gamma_i\left(\frac{j-1}{N_i}\right) & \qquad \text{if}\quad \Gamma_i\ \text{is closed},\\
       \Gamma_i\left(\frac{j-1}{N_i-1}\right) & \qquad \text{if}\quad \Gamma_i\ \text{is open}.
    \end{cases}
\end{equation*}
\noindent
We let
\begin{equation*}
    \edge{i}{j} : [0,1] \ni t\mapsto (1-t)\vertex{i}{j-1} + t\vertex{i}{j} \in \R{2}\qquad\text{for}\quad 1\leq j\leq N_i.
\end{equation*}
From now on, we identify the image of $\edge{i}{j}$ with $\edge{i}{j}$ itself.
Then, we define the approximate curve by $\polygon{i} := \bigcup_{j=1}^{\vertexNum{i}}\edge{i}{j}$.
The discrete curve network is composed of the polygonal curves, i.e., $\polygon{} := \bigcup_{i=1}^{I_S}\polygon{i}$;
the ambient space $\R{2}$ is split into $I_R$ polygonal regions $\regionDiscrete{r}\,(r\in\naturalset{I_R})$, that is, $\R{2}\setminus\polygon{} = \bigcup_{r=1}^{I_R}\regionDiscrete{r}$.
If $\polygon{i}$ is open, then the vertex $\vertex{i}{1}$ (resp. $\vertex{i}{\vertexNum{i}}$) is supposed to be the start point (resp. endpoint) of the curve $\polygon{i}$.
Moreover, it is also supposed that the endpoints of an open curve in $\R{2}$ correspond to some triple junctions in the curve network $\polygon{}$.
Meanwhile, if $\Gamma_i$ is closed, then it is alternatively assumed that $\vertex{i}{0} = \vertex{i}{\vertexNum{i}}$ (see Figure~\ref{fig:open-closed-indexing}).

\begin{figure}[H]
\centering
\begin{subfigure}[t]{0.47\textwidth}
  \centering
  \begin{tikzpicture}[line cap=round,line join=round,>=Stealth]
    \coordinate (c1) at (0.00,1.55);
    \coordinate (c2) at (1.25,1.25);
    \coordinate (c3) at (1.55,0.15);
    \coordinate (c4) at (0.65,-0.65);
    \coordinate (c5) at (-0.65,-0.45);
    \coordinate (c6) at (-1.15,0.65);
    \draw[very thick,blue!65!black]
      (c1)--(c2)--(c3)--(c4)--(c5)--(c6)--cycle;
    \draw[very thick,red!75!black] (c6)--(c1);
    \foreach \p in {c1,c2,c3,c4,c5,c6}
      \fill[black] (\p) circle (1.6pt);
    \node[above left=1pt] at (c1) {$\vertex{i}{1}$};
    \node[above right=1pt] at (c2) {$\vertex{i}{2}$};
    \node[right=2pt] at (c3) {$\vertex{i}{3}$};
    \node[below right=1pt] at (c4) {$\cdots$};
    \node[below left=1pt] at (c5) {$\vertex{i}{N_i-1}$};
    \node[left=2pt] at (c6) {$\vertex{i}{N_i}$};
    \node[above=2pt,red!75!black] at ($(c6)!0.52!(c1)$)
      {$\edge{i}{1}$};
  \end{tikzpicture}
  \caption{Closed curve}
\end{subfigure}\hfill
\begin{subfigure}[t]{0.47\textwidth}
  \centering
  \begin{tikzpicture}[line cap=round,line join=round,>=Stealth]
    \coordinate (o1) at (-1.55,-0.25);
    \coordinate (o2) at (-0.95,0.65);
    \coordinate (o3) at (-0.20,1.15);
    \coordinate (o4) at (0.65,0.85);
    \coordinate (o5) at (1.15,0.15);
    \coordinate (o6) at (1.55,-0.70);
    \draw[very thick,blue!65!black]
      (o1)--(o2)--(o3)--(o4)--(o5)--(o6);
    \draw[very thick,red!75!black] (o1)--(o2);
    \draw[dashed,gray!70] (o6) to[bend left=34] (o1);
    \foreach \p in {o1,o2,o3,o4,o5,o6}
      \fill[black] (\p) circle (1.6pt);
    \node[below left=1pt] at (o1) {$\vertex{i}{1}$};
    \node[above left=1pt] at (o2) {$\vertex{i}{2}$};
    \node[above=2pt] at (o3) {$\vertex{i}{3}$};
    \node[above=2pt] at (o4) {$\cdots$};
    \node[above right=1pt] at (o5) {$\vertex{i}{N_i-1}$};
    \node[below right=1pt] at (o6) {$\vertex{i}{N_i}$};
    \node[above left=1pt,red!75!black] at ($(o1)!0.52!(o2)$)
      {$\edge{i}{2}$};
    \node[below=3pt,gray!75!black,font=\small] at (0,-0.82)
      {ghost $\edge{i}{1}$};
  \end{tikzpicture}
  \caption{Open curve}
\end{subfigure}
\caption{Vertex and edge indexing for closed and open polygonal curves $\polygon{i}$.}
\label{fig:open-closed-indexing}
\end{figure}

We also define a discrete variant of the normal vector field on $\polygon{i}$ by
\begin{equation*}
    \vertexNormal{i}{j} := \frac{\left(\vertex{i}{j} - \vertex{i}{j-1}\right)^\perp}{r_{i,j}}\qquad\mbox{for}\quad i\in\naturalset{I_S}\quad\mbox{and}\quad 1\leq j\leq \vertexNum{i},
\end{equation*}
where $r_{i,j} := |\edge{i}{j}| = \left|\vertex{i}{j} - \vertex{i}{j-1}\right|$, and for any vector $\bv{x}\in\mathbb{R}^2$, the symbol $\bv{x}^\perp$ denotes the vector which is obtained by rotating $\bv{x}$ by $90^\circ$ clockwise.
We let $\angleouter{i}{j}$ be the outer angle of $\polygon{i}$ at the vertex $\vertex{i}{j}$ and define a discrete variant of the curvature $\curvature$ by
\begin{equation*}
    \curvatureDisc{i}{j} := \frac{\tan{\left(\frac{\angleouter{i}{j-1}}{2}\right) + \tan{\left(\frac{\angleouter{i}{j}}{2}\right)}}}{r_{i,j}}\qquad\mbox{for}\quad i\in\naturalset{I_S}\quad\mbox{and}\quad 1\leq j\leq \vertexNum{i}.
\end{equation*}
Since $x\approx\tan{x}$ as $|x|\ll 1$, the quantity $\curvatureDisc{i}{j}$ approximates the turning rate of the curve at the edge center $\vertexCenter{i}{j}$ defined by
\begin{equation*}
    \vertexCenter{i}{j} := \frac{\vertex{i}{j-1} + \vertex{i}{j}}{2}\qquad\mbox{for}\quad i\in\naturalset{I_S}\quad\mbox{and}\quad 1\leq j\leq \vertexNum{i},
\end{equation*}
provided that $\vertexNum{i}$ is taken so large that $r_{i,j}\ll 1$, and hence $\curvatureDisc{i}{j}\approx\tfrac{d\phi}{ds} = -\curvature$, where $s$ denotes the arc-length parameter on $\Gamma_i$. Here the sign is opposite to the normal-direction curvature $\curvature$ of Section~2: the outer angle $\angleouter{i}{j}$ is positive when $\polygon{i}$ turns counter-clockwise, so $\curvatureDisc{i}{j} > 0$ for a curve that is convex toward the region $\regionDiscrete{\regionindexm{i}}$, whereas $\curvature < 0$ there. We retain $\curvatureDisc{i}{j}$ (the implemented quantity) as the right-hand side of the discrete Gibbs--Thomson rows below and absorb the sign into the jump ordering accordingly.
Here, we stress that the index $j$ of the vertices should start from $1$ if $\polygon{i}$ is open, and it should start from $0$ if $\polygon{i}$ is closed
since we cannot define the outer angle at the endpoints of open curves.
\begin{rem}
In the previous work \cite{E24}, the author considered open curves whose endpoints are not triple junctions and lie on the boundary of a half space.
Moreover, he defined imaginary vertices to define the values of $\curvatureDisc{i}{j}$ on the boundary of the half space.
However, in the present model, triple junctions can evolve in time, and the curvature at these points does not make sense.
The half-space setting is revisited in \Section{sec:halfspace}, where the wall endpoints become mobile contacts that meet the wall orthogonally and may coexist with evolving triple junctions.
\end{rem}
\begin{rem}[Curvature correction near triple junctions]\label{rem:dcorr}
For an open curve $\polygon{i}$, the standard formula for $\curvatureDisc{i}{j}$ uses the outer angles at both endpoints of the edge $\edge{i}{j}$, which are unreliable for the four edges adjacent to the triple junctions ($j = 2, 3, N-1, N$ with $N := \vertexNum{i}$), since the outer angle is undefined at the junction vertices. By default the implementation re-evaluates the discrete curvature only at these four edges and only for the right-hand side of the Gibbs--Thomson rows \eqref{eq:GTLS}, using one-sided differences of the turning angle that do not involve the junction vertex,
\begin{align*}
    &\curvatureDisc{i}{2} \leftarrow \frac{\angleouter{i}{2}}{(\ell_2+\ell_3)/2}, \quad
    \curvatureDisc{i}{3} \leftarrow \frac{\angleouter{i}{2}+\angleouter{i}{3}}{\ell_2/2+\ell_3+\ell_4/2},\\
    &\curvatureDisc{i}{N} \leftarrow \frac{\angleouter{i}{N-1}}{(\ell_{N-1}+\ell_N)/2}, \quad
    \curvatureDisc{i}{N-1} \leftarrow \frac{\angleouter{i}{N-2}+\angleouter{i}{N-1}}{\ell_{N-2}/2+\ell_{N-1}+\ell_N/2},
\end{align*}
with $\ell_j := r_{i,j}$. This correction applies to open curves only; for closed curves the discrete curvature is used unchanged. It modifies neither the curvature stored for any other purpose nor the closed-curve rows, and it can be disabled, recovering the uncorrected scheme. In numerical tests on a four-phase open-junction network, the uncorrected junction-edge curvature stays about a factor of two below the corrected one-sided estimate uniformly in the resolution, a deficit that does not improve under refinement; the switch is confined to the field accuracy near the junctions and does not reach the conserved quantities, which are protected by the projection (Proposition~\ref{prop:velocity-level}) and the geometric angle restoration independently of it.
\end{rem}

\section{Fully discrete scheme}
\label{sec:fully-discrete-scheme}
In this section, we give a fully discrete scheme to approximate the interface evolution governed by \eqref{system:all}.
To this end, given a smooth curve network $\Gamma$, we first obtain the spatial discretization $\polygon{}$ of $\Gamma$ using the method introduced in Section~\ref{sec:spatial_discretization}.
After that, we compute the normal velocity $V$ of $\polygon{}$ at each vertex, which is determined by \eqref{system:motion}.
Therefore, we are led to compute $\bvp{w}$ at each time step.
In this paper, we adopt the charge simulation method (CSM) to solve the Dirichlet boundary problem \eqref{system:laplace} and \eqref{system:gibbs-thomson}.
\fix{Figure~\ref{fig:algorithm} at the end of this section collects the whole step in one diagram.}
We now explain a basic idea of the CSM.

Let $\fundSolution$ be the fundamental solution to the Laplace equation in $\mathbb{R}^2$, that is
\begin{equation*}
    \fundSolution(\bv{x}) := \frac{1}{2\pi}\log|\bv{x}| \qquad\mbox{for}\quad \bv{x}\in\mathbb{R}^2\setminus\{\bv{0}\}.
\end{equation*}
Then, for $r\in\naturalset{I_R}$ and $p\in\naturalset{I_P}$, the approximate solution is defined by
\begin{equation}\label{eq:appsol}
    \smallChemical{r}{p}(\bv{x}) := c^{(r)}_{p} + \sum_{i\in \curveRegionMap^{-1}(r)}\sum_{j=1}^{\vertexNum{i}}\charge{r}{p}{i,j}\left\{\fundSolution\left(\bv{x} - \singularPoint{r}{i}{j}\right) - \fundSolution\left(\bv{x} - \singularPointDummy{r}{i}{j}\right)\right\}
    \qquad\mbox{for}\quad \bv{x}\in\mathbb{R}^2,
\end{equation}
where we recall that $\smallChemical{r}{p} := w_p\lfloor_{\regionDiscrete{r}}$, the constant $c^{(r)}_p$ is a region-wise additive constant (one per region $r$ and phase $p$, not shared across regions), and $\singularPoint{r}{i}{j}$ and $\singularPointDummy{r}{i}{j}$ are \textit{the charge points} of the CSM, both placed outside $\domain{r}$.

\begin{rem}
    The origin of construction for approximate solutions in \eqref{eq:appsol} goes back to Murota \cite{M95}
    in which an invariant structure against affine transformations of the coordinate has been invoked in the definition of approximate solutions.
    Due to the appearance of one more unknown, the zero-sum condition on the unknowns has been imposed to solve a linear system.
    Therein, the dummy singular points $\singularPointDummy{r}{i}{j}$ did not indeed appear in the representation of approximate solutions,
    and the use of this kind of points has been proposed by Sakakibara and Yazaki \cite{SY19}.
    In their study, an area-preserving property has been encoded into a linear system instead of Murota's zero-sum condition.
    This structure ensures invariance property of the approximate solution with respect to scale transformation (see \cite[Remark 1]{E24}).
    We note that this invariant property is valid only in the planar case thanks to the logarithm of the fundamental solution.
\end{rem}

For each $r\in\naturalset{I_R}$, we let
\[
    M_r := \sum_{i\in \curveRegionMap^{-1}(r)}\#\left\{\,\mbox{edges of }\polygon{i}\,\right\}
\]
be the total number of collocation points (edge centers) on $\partial\regionDiscrete{r}$, and let
\begin{equation}
    \label{eq:orientation}
    \bv{\nu}^{\mathrm{out}}_{r,i,j} := o_{r,i}\,\vertexNormal{i}{j}, \qquad o_{r,i} := \begin{cases} +1 & r = \regionindexm{i},\\ -1 & r = \regionindexp{i},\end{cases}
\end{equation}
denote the unit normal of $\polygon{i}$ pointing out of $\regionDiscrete{r}$ (note that $\vertexNormal{i}{j}$ points from $\regionDiscrete{\regionindexm{i}}$ to $\regionDiscrete{\regionindexp{i}}$). The charge points are then set as
\begin{equation}\label{eq:charge}
    \singularPoint{r}{i}{j} := \vertexCenter{i}{j} + \frac{1}{\sqrt{M_r}}\,\bv{\nu}^{\mathrm{out}}_{r,i,j}\qquad\mbox{and}\qquad\singularPointDummy{r}{i}{j} := \vertexCenter{i}{j} + M_r^{\,\beta}\,\bv{\nu}^{\mathrm{out}}_{r,i,j},\qquad \beta := \tfrac{3}{2},
\end{equation}
where the constant $\beta$ has been chosen according to \cite[\S 4]{E24}.
Both points lie on the same (outer) side of $\regionDiscrete{r}$: the principal point $\singularPoint{r}{i}{j}$ at the short distance $1/\sqrt{M_r}$, and the auxiliary point $\singularPointDummy{r}{i}{j}$ far away at distance $M_r^{\,\beta}$.
It was shown in \cite[Lemma 4]{E24} that the approximate solution \eqref{eq:appsol} satisfies the Neumann boundary condition \eqref{system:neumann}, i.e.,
$$\nabla\fundSolution\left(\bv{x}-\singularPoint{r}{i}{j}\right)-\nabla\fundSolution\left(\bv{x}-\singularPointDummy{r}{i}{j}\right) = O\left(\frac{1}{|\bv{x}|^2}\right)\qquad\text{as}\quad |\bv{x}|\to\infty,$$
matching the far-field condition in \eqref{system:neumann}.
When $\partial\regionDiscrete{r}$ has several connected components (for instance the exterior region of a configuration containing an isolated closed curve), the points \eqref{eq:charge} are placed component by component.
Figure~\ref{fig:csm-schematic} illustrates this charge-simulation set-up near an interface curve $\polygon{i}$.

{\color{revision}
The principal distance in \eqref{eq:charge} is a compromise. The point must stand off far
enough that the kernel is not nearly singular at the collocation points, yet not so far that
it crosses the next interface; $1/\sqrt{M_r}$ lies between the edge length $O(1/M_r)$ and the
size of the configuration. The auxiliary distance has only to be large, which $M_r^{\beta}$
ensures. How much the computed results depend on this choice is measured in
\Section{subsec:convergence}.
}

\begin{figure}[H]
  \centering
  \begin{tikzpicture}[>={Stealth[length=2.2mm]}, line join=round, scale=1.15]
    \definecolor{regL}{RGB}{86,180,233}   
    \definecolor{regR}{RGB}{230,159,0}    
    \coordinate (V1) at (0.46, 2.20);
    \coordinate (V2) at (0.08, 1.30);
    \coordinate (V3) at (-0.20, 0.64);
    \coordinate (V4) at (-0.20,-0.64);
    \coordinate (V5) at (0.08,-1.30);
    \coordinate (V6) at (0.46,-2.20);
    \fill[regL!13] (V1)--(V2)--(V3)--(V4)--(V5)--(V6)--(-3.0,-2.45)--(-3.0,2.45)--cycle;
    \fill[regR!13] (V1)--(V2)--(V3)--(V4)--(V5)--(V6)--(3.0,-2.45)--(3.0,2.45)--cycle;
    \coordinate (M) at ($(V3)!0.5!(V4)$);
    \coordinate (yL) at ($(M)+(-1.35,0)$);
    \coordinate (zL) at ($(M)+(-2.28,0)$);
    \coordinate (yR) at ($(M)+(1.35,0)$);
    \coordinate (zR) at ($(M)+(2.28,0)$);
    \draw[gray!60,dotted,thick] (M) -- (zL);
    \draw[gray!60,dotted,thick] (M) -- (zR);
    \draw[very thick] (V1)--(V2)--(V3)--(V4)--(V5)--(V6);
    \foreach \p in {V1,V2,V3,V4,V5,V6}{\fill (\p) circle (1.6pt);}
    \foreach \a/\b in {V1/V2,V2/V3,V4/V5,V5/V6}{
      \coordinate (m) at ($(\a)!0.5!(\b)$);
      \draw[gray!55] (m)+(-1.6pt,-1.6pt) -- +(1.6pt,1.6pt);
      \draw[gray!55] (m)+(-1.6pt,1.6pt) -- +(1.6pt,-1.6pt);
    }
    \draw (M)+(-1.9pt,-1.9pt) -- +(1.9pt,1.9pt);
    \draw (M)+(-1.9pt,1.9pt) -- +(1.9pt,-1.9pt);
    \foreach \p in {yL,zL,yR,zR}{\fill[white] (\p) circle (1.9pt); \draw (\p) circle (1.9pt);}
    \draw[->,black,thick] (M) -- ++(-0.55,0) node[above,black,inner sep=1pt] {$\vertexNormal{i}{j}$};
    \node[right] at ($(V3)+(0.10,0.18)$)  {$\vertex{i}{j}$};
    \node[right] at ($(V4)+(0.10,-0.18)$) {$\vertex{i}{j-1}$};
    \node[right] at ($(M)+(0.12,-0.17)$)  {$\vertexCenter{i}{j}$};
    \node[below] at (yL) {$\singularPoint{\regionindexm{i}}{i}{j}$};
    \node[below] at (zL) {$\singularPointDummy{\regionindexm{i}}{i}{j}$};
    \node[below] at (yR) {$\singularPoint{\regionindexp{i}}{i}{j}$};
    \node[below] at (zR) {$\singularPointDummy{\regionindexp{i}}{i}{j}$};
    \node at (-2.5, 1.9) {$\regionDiscrete{\regionindexp{i}}$};
    \node at (2.5, 1.9)  {$\regionDiscrete{\regionindexm{i}}$};
    \node[below right] at (V6) {$\polygon{i}$};
  \end{tikzpicture}
  \caption{Charge-simulation set-up near a polygonal curve $\polygon{i}$.}
  \label{fig:csm-schematic}
\end{figure}
Using these notations, we now explain how to determine the coefficients $c^{(r)}_{p}$ and $\charge{r}{p}{i,j}$.
We note that the chemical potentials $\smallChemical{r}{p}$ sum up to zero for each $r\in \naturalset{I_R}$,
so we only need to determine the coefficients for $1 \leq p \leq I_P - 1$.
Since the additive constant $c^{(r)}_p$ is region-wise, each region contributes one constant per phase. Thus, the number of unknowns is equal to
\begin{equation}
    \label{eq:num-unknowns}
    \unknownsNum := (I_P - 1)\left\{\sum_{r=1}^{I_R}\left(\sum_{i\in \curveRegionMap^{-1}(r)}N_i + 1\right)\right\}.
\end{equation}
First, the Gibbs--Thomson law \eqref{system:gibbs-thomson} is imposed on both sides of each curve. Recalling the discrete curvature $\curvatureDisc{i}{j}$ together with its sign convention fixed in Section~\ref{sec:spatial_discretization}, the relation to be discretized reads
\begin{equation}\label{eq:GTL}
    \smallChemical{\regionindexpm{i}}{\phaseindexm{i}} - \smallChemical{\regionindexpm{i}}{\phaseindexp{i}} = \tension{i}\,\kappa^h_i\quad\mbox{on}\quad\polygon{i}\quad\mbox{for}\quad i\in\naturalset{I_S}.
\end{equation}
\noindent
Second, the continuity condition of the chemical potential across the curve network $\Gamma$ \eqref{system:continuity} is given by
\begin{equation}\label{eq:CC}
    \smallChemical{\regionindexp{i}}{p} = \smallChemical{\regionindexm{i}}{p}\quad\mbox{on}\quad\polygon{i}\quad\mbox{for}\quad i\in\naturalset{I_S}\quad\mbox{and}\quad 1\leq p\leq I_P - 1.
\end{equation}

We can directly calculate the normal derivative of the approximate solution \eqref{eq:appsol} as follows:
\begin{equation*}
    \nabla\smallChemical{r}{p}(\bv{x}) = \sum_{i\in \curveRegionMap^{-1}(r)}\sum_{j=1}^{\vertexNum{i}}\charge{r}{p}{i,j}\left\{\nabla\fundSolution\left(\bv{x} - \singularPoint{r}{i}{j}\right) - \nabla\fundSolution\left(\bv{x} - \singularPointDummy{r}{i}{j}\right)\right\},
\end{equation*}
and the normal velocity $v_{i,j}$ at the edge center $\vertexCenter{i}{j}$ is computed from the motion law \eqref{system:motion},
which gives the two equivalent representations:
\begin{equation}\label{eq:Vij}
    \left(\nabla\smallChemical{\regionindexp{i}}{\phaseindexp{i}} - \nabla\smallChemical{\regionindexm{i}}{\phaseindexp{i}}\right)\cdot\vertexNormal{i}{j} = -v_{i,j}\quad\mbox{and}\quad
    \left(\nabla\smallChemical{\regionindexp{i}}{\phaseindexm{i}} - \nabla\smallChemical{\regionindexm{i}}{\phaseindexm{i}}\right)\cdot\vertexNormal{i}{j} = v_{i,j}.
\end{equation}
In the implementation, we choose the second representation in \eqref{eq:Vij}, i.e., from the phase $\phaseindexm{i}$ ($= \regionPhaseMap(\regionindexm{i})$) on both sides of $\polygon{i}$.

We need $(I_P - 1)$ more equations to determine the coefficients.
To this end, we take the area-preserving condition for each phase into account (see Proposition~\ref{prop:ap}).
Namely, we require that
\begin{equation*}
    0 = \frac{d}{dt}\sum_{r\in \regionPhaseMap^{-1}(p)}\left|\domain{r}(t)\right| = -\sum_{r\in \regionPhaseMap^{-1}(p)}\int_{\partial\domain{r}}V_{in}(r)\,\dH{1}\quad\mbox{for}\quad 1\leq p \leq I_P - 1,
\end{equation*}
where $V_{in}(r)$ denotes the inward normal velocity at the boundary $\partial\domain{r}$.
We discretize the integration over the boundary $\partial\domain{r}$ by the sum of the integrals over the edges $\edge{i}{j}$.
Then, we obtain
\begin{equation}\label{eq:AP}
    \sum_{r\in \regionPhaseMap^{-1}(p)}\left(\sum_{\substack{i\in \curveRegionMap^{-1}(r)\\ r = \regionindexp{i}}}\sum_{j=1}^{\vertexNum{i}}v_{i,j}r_{i,j} + \sum_{\substack{i\in \curveRegionMap^{-1}(r)\\ r = \regionindexm{i}}}\sum_{j=1}^{\vertexNum{i}}-v_{i,j}r_{i,j}\right) = 0\quad\mbox{for}\quad 1\leq p\leq I_P - 1.
\end{equation}
The condition \eqref{eq:AP} constrains only the change of area.
The area surrounded by the oriented boundary circuit $\partial\regionDiscrete{r}$ is computed by the \textit{shoelace formula}:
\begin{equation}\label{eq:shoelace-area}
    \left|\regionDiscrete{r}\right|
    = \frac{1}{2}\left|\,
        \sum_{i\in \curveRegionMap^{-1}(r)} o_{r,i}
        \sum_{j=1}^{\vertexNum{i}} \vertex{i}{j-1}\times\vertex{i}{j}
    \,\right|,
    \qquad
    \bv{a}\times\bv{b} := a_1 b_2 - a_2 b_1 ,
\end{equation}
where the orientation sign $o_{r,i}$ of \eqref{eq:orientation} traverses each
incident curve in the direction consistent with $\partial\regionDiscrete{r}$; adjacent
curves share their triple-junction endpoints, so the circuit closes with no
connecting edges. Differentiating \eqref{eq:shoelace-area} in time reproduces,
to leading order, the left-hand side of \eqref{eq:AP}: the linear system enforces
$\tfrac{d}{dt}|\domain{r}(t)|=0$ at the velocity level, while
\eqref{eq:shoelace-area} is the polygonal area whose residual drift $E_A$ is
reported in Section~\ref{sec:experiments}.
Here, we note that the inward normal velocity $V_{in}(r)$ at the boundary $\partial\domain{r}$ is given by
\begin{equation*}
    V_{in}(r) = \begin{cases}
        v_{i,j}\qquad&\mbox{for}\quad r = \regionindexp{i},\\
        -v_{i,j}\qquad&\mbox{for}\quad r = \regionindexm{i}.
    \end{cases}
\end{equation*}
To compute the above $v_{i,j}$, we can use the formulae \eqref{eq:Vij}.

Finally, we represent the formulae \eqref{eq:GTL}, \eqref{eq:CC}, and \eqref{eq:AP} as a linear system of the coefficients $c^{(r)}_{p}$ and $\charge{r}{p}{i,j}$.
To this end, we introduce the following notations:
\begin{align*}
    \scalarG{r}{c_1}{\ell}{c_2}{j} &:= \fundSolution\left(\vertexCenter{c_1}{\ell} - \singularPoint{r}{c_2}{j}\right) - \fundSolution\left(\vertexCenter{c_1}{\ell} - \singularPointDummy{r}{c_2}{j}\right),\\
    \vectorH{r}{c_1}{\ell}{c_2}{j} &:= \nabla\fundSolution\left(\vertexCenter{c_1}{\ell} - \singularPoint{r}{c_2}{j}\right) - \nabla\fundSolution\left(\vertexCenter{c_1}{\ell} - \singularPointDummy{r}{c_2}{j}\right),
\end{align*}
for $r\in\naturalset{I_R}$, $c_1,\,c_2\in\naturalset{I_S}$, $\ell\in\naturalset{\vertexNum{c_1}}$, and $j\in\naturalset{\vertexNum{c_2}}$.
\medskip

Then, we obtain the following linear system.
\medskip

\noindent
\begin{subequations}\label{eq:LS}
    \textbf{[Gibbs--Thomson law]} For every $i\in\naturalset{I_S}$, $k\in\naturalset{\vertexNum{i}}$,
    and each side $\regionindexpm{i}\in\{\regionindexm{i},\regionindexp{i}\}$, it holds that
    \begin{equation}\label{eq:GTLS}
        c^{(\regionindexpm{i})}_{\phaseindexm{i}} - c^{(\regionindexpm{i})}_{\phaseindexp{i}} + \sum_{\ell\in \curveRegionMap^{-1}(\regionindexpm{i})}\sum_{j=1}^{\vertexNum{\ell}}\left(\charge{\regionindexpm{i}}{\phaseindexm{i}}{\ell,j} - \charge{\regionindexpm{i}}{\phaseindexp{i}}{\ell,j}\right) \scalarG{\regionindexpm{i}}{i}{k}{\ell}{j} = \tension{i}\curvatureDisc{i}{k}.
    \end{equation}
    \textbf{[Continuity condition]} For every $i\in\naturalset{I_S}$, $k\in\naturalset{\vertexNum{i}}$, and $1\leq p\leq I_P - 1$, it holds that
    \begin{equation}\label{eq:CCLS}
        c^{(\regionindexp{i})}_{p} - c^{(\regionindexm{i})}_{p} + \sum_{\ell\in \curveRegionMap^{-1}(\regionindexp{i})}\sum_{j=1}^{\vertexNum{\ell}}\charge{\regionindexp{i}}{p}{\ell,j}\scalarG{\regionindexp{i}}{i}{k}{\ell}{j} - \sum_{\ell\in \curveRegionMap^{-1}(\regionindexm{i})}\sum_{j=1}^{\vertexNum{\ell}}\charge{\regionindexm{i}}{p}{\ell,j}\scalarG{\regionindexm{i}}{i}{k}{\ell}{j} = 0.
    \end{equation}
    \textbf{[Area-preserving condition]} For every $1\leq p\leq I_P-1$, it holds that
    \begin{equation}\label{eq:APLS}
        \sum_{r\in \regionPhaseMap^{-1}(p)}\left(\sum_{\substack{i\in \curveRegionMap^{-1}(r)\\ r = \regionindexp{i}}}\sum_{j=1}^{\vertexNum{i}} r_{i,j}\, v_{i,j} - \sum_{\substack{i\in \curveRegionMap^{-1}(r)\\ r = \regionindexm{i}}}\sum_{j=1}^{\vertexNum{i}} r_{i,j}\, v_{i,j}\right) = 0,
    \end{equation}
    where the single normal velocity $v_{i,j}$ is taken in the $\phaseindexm{i}$-representation (the second identity of \eqref{eq:Vij}):
    \begin{equation*}
        v_{i,j} =  \left(\sum_{\ell\in \curveRegionMap^{-1}(\regionindexp{i})}\sum_{k=1}^{\vertexNum{\ell}}\charge{\regionindexp{i}}{\phaseindexm{i}}{\ell,k}\vectorH{\regionindexp{i}}{i}{j}{\ell}{k} - \sum_{\ell\in \curveRegionMap^{-1}(\regionindexm{i})}\sum_{k=1}^{\vertexNum{\ell}}\charge{\regionindexm{i}}{\phaseindexm{i}}{\ell,k}\vectorH{\regionindexm{i}}{i}{j}{\ell}{k}\right) \cdot\vertexNormal{i}{j}.
    \end{equation*}
\end{subequations}

The rows \eqref{eq:GTLS}, \eqref{eq:CCLS}, and \eqref{eq:APLS} assemble into a linear system $A\bv{q} = \bv{b}$ for the unknown vector $\bv{q}\in\R{\unknownsNum}$
collecting the region-wise coefficients $\charge{r}{p}{i,j}$ and constants $c^{(r)}_p$ ($1\le p\le I_P-1$).

Let $C\bv{q} = \bv{0}$ denote the $(I_P-1)$ area-preserving rows \eqref{eq:APLS}, and let $A_1\bv{q} = \bv{b}_1$ denote the remaining Gibbs--Thomson \eqref{eq:GTLS} and continuity \eqref{eq:CCLS} rows.
We enforce $C\bv{q}=\bv{0}$ as a hard constraint by the orthogonal projection onto its null space:
\begin{equation}\label{eq:proj}
    P := I - C^\top\left(CC^\top\right)^{-1}C,
\end{equation}
and we solve the projected least-squares problem:
\begin{equation}\label{eq:pcgls}
    \min_{\bv{y}\in\R{\unknownsNum}} \left\|A_1 P\bv{y} - \bv{b}_1\right\|_2^2, \qquad \bv{q} := P\bv{y},
\end{equation}
by the conjugate-gradient least-squares (CGLS) iteration. By construction, $C\bv{q} = CP\bv{y} = \bv{0}$ holds to machine precision,
and thus the discrete area-preserving condition \eqref{eq:APLS} is satisfied at the level of the reconstructed velocity field independently of the residual of \eqref{eq:pcgls}. The Gibbs--Thomson and continuity rows are then satisfied in the least-squares sense.

The normal velocities $v_{i,j}$ are computed at the edge centers $\vertexCenter{i}{j}$, whereas the curve is advanced by moving its vertices $\vertex{i}{j}$. We therefore transfer the edge data to the vertices by averaging the adjacent edge normals and tangents. Define the (unnormalized) vertex normal and tangent
\begin{equation*}
    \overline{\bv{\nu}}_{i,j} := \frac{\vertexNormal{i}{j} + \vertexNormal{i}{j+1}}{2\cos\left(\frac{\angleouter{i}{j}}{2}\right)}, \qquad
    \overline{\bv{\tau}}_{i,j} := \frac{\bv{\tau}_{i,j} + \bv{\tau}_{i,j+1}}{2\cos\left(\frac{\angleouter{i}{j}}{2}\right)},
\end{equation*}
where $\bv{\tau}_{i,j}$ is the unit tangent of the edge $\edge{i}{j}$. Each vertex is then advanced by the normal velocity along $\overline{\bv{\nu}}_{i,j}$ together with a tangential redistribution velocity $T_{i,j}$,
which will be determined by the \textit{uniform distribution method} (UDM) below, along $\overline{\bv{\tau}}_{i,j}$:
\begin{equation}\label{eq:evolve}
    \vertexSeq{n+1}{i}{j} = \vertexSeq{n}{i}{j} + \Delta t V^{(n)}\qquad\text{with}\quad V^{(n)} := v_{i,j}\,\overline{\bv{\nu}}_{i,j} + T_{i,j}\,\overline{\bv{\tau}}_{i,j}.
\end{equation}
For an open curve the two endpoints ($j=1$ and $j=\vertexNum{i}$) are held fixed at this stage (their velocity is set to zero) and are subsequently relocated by the triple-junction correction below; for a closed curve all vertices are advanced by \eqref{eq:evolve}. Here, $\Delta t > 0$ is a time step size;
in the numerical experiments, we take $\Delta t = c / N^2$ with a fixed constant $c$ (typically $c\in[0.01, 0.5]$), where $N$ is the number of vertices per curve.
\begin{rem}\label{rem:constants}
The region-wise constants $c^{(r)}_p$ are not shared across regions.
Their differences across each curve are fixed by the continuity rows \eqref{eq:CCLS},
and the remaining gauge freedom (a global additive constant per phase) is harmless for the velocity reconstruction,
which depends only on the gradients $\nabla\smallChemical{r}{p}$. The constrained least-squares \eqref{eq:pcgls} selects a representative consistent with all rows.
\end{rem}
To avoid mesh degeneration, we redistribute the vertices along the curve by the tangential velocity $T_{i,j}$ of \eqref{eq:evolve}; since this motion is tangential, it does not change the geometric evolution.
To this end, we follow the strategy of \cite[Eq.(21)]{E24}, although we
modify it so that each curve may be open.
Fix a polygonal curve $\polygon{i}$ with $N := \vertexNum{i}$ edges,
write $\ell_j := r_{i,j}$ for the edge lengths, $\polygonLength{} := \sum_{j=1}^{N_i} \ell_j$ for its length, and
\begin{equation*}
    \ldotDiscrete
    := \sum_{j} \curvatureDisc{i}{j}\, v_{i,j}\,\ell_j
    \approx \int_\Gamma \curvature V\,\dH{1} = \ldot
\end{equation*}
for the discrete derivative of length, where the sum runs over the real edges
($j=2,\dots,N$ for an open curve, $j=1,\dots,N$ for a closed one).
The redistribution drives the edge lengths toward the uniform value $L/N$ (resp.\ $L/(N-1)$ for an open curve),
where $L$ denotes the length of the curve which is supposed to be approximated by the polygonal curve $\polygon{i}$.
We briefly explain its procedure. For a closed curve, we adopt the same formula in \cite[Eq.(21)]{E24}.
For an open curve, the endpoint tangential velocities are fixed, i.e., $T_{i,1} = T_{i,N} = 0$,
and the interior values solve the bidiagonal least-squares system:
\begin{multline*}
    -\cos\!\tfrac{\angleouter{i}{j}}{2}\,T_{i,j} + \cos\!\tfrac{\angleouter{i}{j+1}}{2}\,T_{i,j+1}\\
    = \frac{\ldotDiscrete}{N-1} + \left(\frac{\lDiscrete}{N-1} - \ell_{j+1}\right)10 N - v_{i,j}\sin\!\tfrac{\angleouter{i}{j+1}}{2} - v_{i,j-1}\sin\!\tfrac{\angleouter{i}{j}}{2}
\end{multline*}
for $2\leq j\leq N-1$, which we solve by CGLS. In both cases the factor $10N$ is an implementation parameter controlling the strength of the equi-distribution.

The triple junctions are updated geometrically, not as rows of the linear system \eqref{eq:GTLS}, \eqref{eq:CCLS}, and \eqref{eq:APLS}.
During the free curve update, the endpoints of each open curve are held fixed, while all interior vertices are advanced by \eqref{eq:evolve}. For each $k\in\naturalset{I_T}$, let $\bv{a}_1,\bv{a}_2,\bv{a}_3$ denote the updated interior vertices adjacent to the junction $\triplejunction{k}$ on its three incident curves. We compute a new common junction position $\bv{p}^*_k$ such that the directions $\bv{a}_m-\bv{p}^*_k\,(m=1,2,3)$ pairwise enclose the equilibrium angles ($120^\circ$ in the equal-tension case). The point $\bv{p}^*_k$ is obtained by a damped, regularized Gauss--Newton iteration applied to the three angle-cosine residuals, initialized at the centroid of $\bv{a}_1,\bv{a}_2,\bv{a}_3$.
The corresponding endpoint of every incident curve is then reset to this same point (Figure~\ref{fig:tj-correction}). Finally, the geometric quantities (edges, normals, outer angles, and the discrete curvature) of the affected curves are recomputed from the corrected vertices according to the procedure explained in Section~\ref{sec:spatial_discretization}.
Since closed curves have no triple-junction endpoints, they are advanced by \eqref{eq:evolve} with all of their vertices updated, without any endpoint correction.
We stress that the Herring--Young condition is enforced here as a geometric correction.

\begin{figure}[H]
  \centering
  \begin{tikzpicture}[>={Stealth[length=2.2mm]}, line join=round, scale=1.05]
    \coordinate (A1) at (90:1.35);
    \coordinate (A2) at (210:1.25);
    \coordinate (A3) at (330:1.45);
    \coordinate (Pold) at (0.18,-0.18);
    \draw[gray!55] (A1) -- ++(80:0.50) -- ++(60:0.50);
    \draw[gray!55] (A2) -- ++(215:0.45) -- ++(240:0.45);
    \draw[gray!55] (A3) -- ++(320:0.50) -- ++(300:0.50);
    \draw[very thick] (A1) -- (Pold);
    \draw[very thick] (A2) -- (Pold);
    \draw[very thick] (A3) -- (Pold);
    \foreach \a in {A1,A2,A3}{\fill (\a) circle (1.6pt);}
    \fill (Pold) circle (1.8pt);
    \node[right,inner sep=3pt] at (A1) {$\bv{a}_1$};
    \node[below left,inner sep=2pt] at (A2) {$\bv{a}_2$};
    \node[below,inner sep=3pt] at (A3) {$\bv{a}_3$};
    \node[below,inner sep=5pt] at (Pold) {$\bv{p}^{(n)}_k$};
    \node at (0,-2.15) {(a)};
    \begin{scope}[shift={(5.6,0)}]
      \coordinate (P) at (0,0);
      \coordinate (B1) at (90:1.35);
      \coordinate (B2) at (210:1.25);
      \coordinate (B3) at (330:1.45);
      \draw[gray!55] (B1) -- ++(80:0.50) -- ++(60:0.50);
      \draw[gray!55] (B2) -- ++(215:0.45) -- ++(240:0.45);
      \draw[gray!55] (B3) -- ++(320:0.50) -- ++(300:0.50);
      \foreach \b in {B1,B2,B3}{\draw[very thick] (\b) -- (P);}
      \draw[gray!80] (90:0.40) arc[start angle=90, end angle=210, radius=0.40];
      \node[gray!50!black] at (150:0.70) {$120^\circ$};
      \draw[gray!80] (330:0.34) arc[start angle=330, end angle=450, radius=0.34];
      \node[gray!50!black] at (30:0.66) {$120^\circ$};
      \foreach \b in {B1,B2,B3}{\fill (\b) circle (1.6pt);}
      \fill (P) circle (1.8pt);
      \node[below,inner sep=5pt] at (P) {$\bv{p}^*_k$};
      \node[right,inner sep=3pt] at (B1) {$\bv{a}_1$};
      \node[below left,inner sep=2pt] at (B2) {$\bv{a}_2$};
      \node[below,inner sep=3pt] at (B3) {$\bv{a}_3$};
      \node at (0,-2.15) {(b)};
    \end{scope}
  \end{tikzpicture}
  \caption{Post-hoc geometric triple-junction update: (a) after the free curve update;
    (b) \fix{r}eset of the endpoints to the 
    new common point $\bv{p}^*_k$.}
  \label{fig:tj-correction}
\end{figure}

\begin{figure}[H]
  \centering
  {\color{revision}
  \begin{tikzpicture}[
      >={Stealth[length=2.2mm]},
      box/.style={draw, rounded corners=2pt, align=left, inner sep=3pt,
                  text width=0.52\linewidth, font=\footnotesize},
      hs/.style={draw, rounded corners=2pt, align=left, inner sep=3pt,
                 text width=0.30\linewidth, font=\footnotesize, anchor=north east},
      arr/.style={->, thick}
    ]
    \node[box] (geo) {\textbf{Polygonal network $\polygon{}$ at $t_n$}: edges, normals,
      outer angles and the discrete curvature $\curvatureDisc{i}{j}$
      (\Section{sec:spatial_discretization}), re-evaluated next to a junction
      (\Remark{rem:dcorr})};
    \node[box, below=5mm of geo] (chgW) {\textbf{Charge points}: a principal and an
      auxiliary point per collocation point \eqref{eq:charge}};
    \node[hs] (chgH) at ($(chgW.north west)+(-6mm,0)$) {\textbf{Charge points}: the principal one at
      the local offset $\alpha\,r_{i,j}$, every one reflected in the wall
      \eqref{eq:hs-source}};
    \path let \p1=(chgW.south), \p2=(chgH.south) in
      coordinate (mrg1) at (\x1,{min(\y1,\y2)-2.5mm})
      node[box, anchor=north] (asm) at (\x1,{min(\y1,\y2)-5mm})
        {\textbf{Collocation rows}: Gibbs--Thomson \eqref{eq:GTLS}, continuity
         \eqref{eq:CCLS} and phase areas \eqref{eq:APLS}};
    \node[box, below=5mm of asm] (solW) {\textbf{Projected solve}: CGLS \eqref{eq:pcgls};
      every iterate lies in $\ker C$, so the area rows hold whatever the residual};
    \node[hs] (solH) at ($(solW.north west)+(-6mm,0)$) {\textbf{Projected solve}: the same, with the
      flux-balance block $D$ in the objective \eqref{eq:hs-charge}};
    \path let \p1=(solW.south), \p2=(solH.south) in
      coordinate (mrg2) at (\x1,{min(\y1,\y2)-2.5mm})
      node[box, anchor=north] (vel) at (\x1,{min(\y1,\y2)-5mm})
        {\textbf{Normal velocity}: $v_{i,j}$ at every collocation point from the motion
         law \eqref{eq:Vij}};
    \node[box, below=5mm of vel] (advW) {\textbf{One explicit step}: tangential
      redistribution $T_{i,j}$, then \eqref{eq:evolve}, the endpoints of an open curve
      held fixed};
    \node[box, below=2.5mm of advW] (tjcW) {\textbf{Triple junctions}: relocated to the
      equilibrium angles, geometry of the affected curves recomputed};
    \node[hs] (advH) at ($(advW.north west)+(-6mm,0)$) {\textbf{One constrained solve}: the
      vertices, the wall contacts and the junctions advance on their manifolds by
      \eqref{eq:hs-kkt}, so no angle is restored afterwards
      (\Section{subsec:hs-manifold})};
    \path let \p1=(tjcW.south), \p2=(advH.south) in
      coordinate (mrg3) at (\x1,{min(\y1,\y2)-2.5mm})
      node[box, anchor=north] (dis) at (\x1,{min(\y1,\y2)-5mm})
        {\textbf{Disappearance event}: a region below the threshold $\delta$ removed and
         the area rows re-baselined (\Section{subsec:disappearance})};
    \foreach \a/\b in {geo/chgW, chgW/asm, asm/solW, solW/vel, vel/advW, advW/tjcW, tjcW/dis}
      {\draw[arr] (\a) -- (\b);}
    \foreach \prev/\wh in {geo/chgW, asm/solW, vel/advW}
      {\coordinate (split-\wh) at ($(\prev.south)!0.5!(\wh.north)$);}
    \foreach \wh/\h in {chgW/chgH, solW/solH, advW/advH}
      {\draw[arr] (split-\wh) -| (\h.north)
         node[midway, above, font=\scriptsize] {half-space case};}
    \draw[arr] (chgH.south) |- (mrg1);
    \draw[arr] (solH.south) |- (mrg2);
    \draw[arr] (advH.south) |- (mrg3);
    \draw[arr] (dis.east) -- ++(0.35,0) |- ($(geo.east)+(0.35,0)$) -- (geo.east);
    \node[right,font=\footnotesize] at ($(dis.east)!0.5!(geo.east)+(0.42,0)$) {$t_{n+1}$};
  \end{tikzpicture}}
  \caption{\fix{One time step of the scheme in the whole space.
  The three branches carry the stages that the half-space construction of \Section{sec:halfspace} performs
    differently.}}
  \label{fig:algorithm}
\end{figure}

\section{Extension to Neumann boundary problem}
\label{sec:halfspace}
So far, we have explained the implementation of the fully discrete scheme in the whole space $\R{2}$.
In this section, we extend the proposed scheme to the half space $\halfspace{2}$ with boundary contacts.

\subsection{Half-space Neumann problem}
First, we introduce a target problem whose solution will be approximated by our proposed scheme.
Let $(\bv{e}_1,\bv{e}_2)$ be the standard basis of $\R{2}$.
We consider the system \eqref{system:all} in the half space $\halfspace{2}$ with the boundary wall $\wall$ defined by:
\[
    \halfspace{2} := \left\{\bv{x} = (x_1,x_2)^{\top} \in\R{2} \biggm| x_2 > 0\right\}\quad\text{and}\quad \wall := \partial\halfspace{2} = \left\{\bv{x} = (x_1,x_2)^\top\in\mathbb{R}^2 \biggm| x_2 = 0\right\}.
\]
The governing system is \eqref{system:all} with $\mathbb{R}^2$ replaced by $\halfspace{2}$, augmented by the wall condition:
\begin{equation}\label{eq:hs-neumann}
    \nabla\bvp{w}(\cdot,t)\,\bv{\nu}_{\wall} = \bvp{0}\qquad\mbox{on}\quad\wall,\quad t>0,
\end{equation}
where $\bv{\nu}_{\wall} = -\bv{e}_2$ is the outward unit normal vector to $\halfspace{2}$, which is constant in the half-space case.
An open curve may now end either at a triple junction (as in Section~\ref{sec:fully-discrete-scheme}) or at the Neumann boundary.
In the latter case, the endpoint can slide along $\wall$ (mobile) and is required to meet the boundary orthogonally,
\begin{equation}\label{eq:hs-ortho}
    \bv{\mu}_i\cdot\bv{e}_1 = 0\qquad\mbox{at a boundary contact of }\Gamma_i,
\end{equation}
i.e.,\ the interface tangent is vertical there ($90^\circ$ contact).
We stress that \eqref{eq:hs-ortho} is a prescribed neutral contact condition of the present model,
which is imposed independently of the Neumann condition \eqref{eq:hs-neumann},
which constrains the chemical potential.
The triple-junction force balance (the Herring--Young \fix{condition}, \Remark{rem:young}) is retained unchanged.

\begin{rem}
  In the two-phase case with $d = 2,\,3$, local well-posedness of the underlying problem in a smooth bounded domain with the pure Neumann boundary condition \eqref{eq:hs-neumann} and ninety-degree angle contact condition \eqref{eq:hs-ortho}
  has been established by Abels, Rauchecker, and Wilke \cite{ARW21}.
  For a stability analysis of stationary solutions to the same flow in the case $d = 2$, we refer the reader to Garcke and Rauchecker \cite{GR22}.
  We refer the reader to Hensel and Stinson \cite{HS24} for weak solutions in the case $d = 2,\,3$ with constant contact angle condition.
\end{rem}

For a source point $\bv{q} = (q_1,q_2)^\top$, let its mirror image across the boundary be
\begin{equation}\label{eq:hs-mirror}
    \bv{q}^\ast := (q_1,\; - q_2)^\top,
\end{equation}
and define the half-space (Neumann) fundamental solution by superposing the source and its image with equal sign,
\begin{equation}\label{eq:hs-image}
    \fundSolution_{\wall}(\bv{x},\bv{q}) := \fundSolution(\bv{x}-\bv{q}) + \fundSolution(\bv{x}-\bv{q}^\ast),
\end{equation}
where we recall that $\fundSolution$ is the fundamental solution to the Laplace equation in $\R{2}$.
On the boundary, the distances from $\bv{q}$ and $\bv{q}^\ast$ coincide, i.e., $|\bv{x}-\bv{q}| = |\bv{x}-\bv{q}^\ast|$ holds for $\bv{x}$ with $x_2 = 0$, and the normal derivative is
\begin{equation}\label{eq:hs-noflux}
    \frac{\partial\fundSolution_{\wall}}{\partial x_2}(\bv{x},\bv{q})
    = \frac{1}{2\pi}\left[\frac{x_2-q_2}{|\bv{x}-\bv{q}|^2} + \frac{x_2 + q_2}{|\bv{x}-\bv{q}^\ast|^2}\right]
    = 0\qquad\mbox{on}\quad x_2 = 0,
\end{equation}
since on $\wall$ the two numerators are $x_2-q_2 = -q_2$ and $x_2 + q_2 = q_2$ while the denominators are equal.
Hence, any potential built from $\fundSolution_{\wall}$ satisfies the homogeneous Neumann condition \eqref{eq:hs-neumann} on the boundary exactly,
with no additional unknowns.

\subsection{Fully discrete scheme for half-space Neumann problem}
In the CSM, the approximate solution \eqref{eq:appsol} is retained with $\fundSolution$ replaced by $\fundSolution_{\wall}$,
namely, the scalar and vector blocks remain the differences:
\begin{align}\label{eq:hs-G}
    \scalarG{r}{c_1}{\ell}{c_2}{j} &= \fundSolution_{\wall}(\vertexCenter{c_1}{\ell},\singularPoint{r}{c_2}{j}) - \fundSolution_{\wall}(\vertexCenter{c_1}{\ell},\singularPointDummy{r}{c_2}{j}),\\
    \vectorH{r}{c_1}{\ell}{c_2}{j} &= \nabla\fundSolution_{\wall}(\vertexCenter{c_1}{\ell},\singularPoint{r}{c_2}{j}) - \nabla\fundSolution_{\wall}(\vertexCenter{c_1}{\ell},\singularPointDummy{r}{c_2}{j}),\nonumber
\end{align}
where $\nabla$ is applied to the field point $\vertexCenter{c_1}{\ell}$.
The charge-point placement, however, differs from \eqref{eq:charge} as follows:
\begin{equation}\label{eq:hs-source}
    \singularPoint{r}{i}{j} := \vertexCenter{i}{j} + \alpha\,r_{i,j}\,\bv{\nu}^{\mathrm{out}}_{r,i,j},\qquad
    \singularPointDummy{r}{i}{j} := \vertexCenter{i}{j} + M_r^{\fix{\beta}}\,\bv{\nu}^{\mathrm{out}}_{r,i,j},
\end{equation}
i.e.,\ the principal source sits at a local-edge offset $\alpha\,r_{i,j}$ (with $\alpha$ an implementation parameter,
$\alpha=1.5$ in the runs) rather than at the $1/\sqrt{M_r}$ distance of \eqref{eq:charge}; the auxiliary source keeps the $M_r^{\fix{\beta}}$ far placement, and the principal-minus-dummy structure of \eqref{eq:hs-G} is retained.
The local scaling is used because the orthogonal wall contact makes the edge lengths strongly non-uniform along each curve, and anchoring the principal source to the local edge length keeps it at a fixed multiple of the local discretization scale\fix{: with $\alpha=1.5$, the principal source stands at one and a half edge lengths from the interface at every collocation point}.

\begin{rem}
    For the structure of fundamental solutions \eqref{eq:hs-image}, we have followed the previous work \cite[\S 5, Alternative fundamental solutions]{E24}
    to force the approximate solution to satisfy the pure Neumann boundary condition on $\partial\halfspace{d} = \wall$.
\end{rem}

Figure~\ref{fig:hs-schematic} illustrates this construction near the wall: the charge
points \eqref{eq:hs-source} of each region are superposed with their mirror images
$\bv{y}^{(r)\ast}_{i,j},\bv{z}^{(r)\ast}_{i,j}$ reflected across $\wall$
\eqref{eq:hs-mirror}, so that the homogeneous Neumann condition holds on $\wall$ exactly
\eqref{eq:hs-noflux}.

\begin{figure}[H]
  \centering
  \begin{tikzpicture}[>={Stealth[length=2.2mm]}, line join=round, scale=1.05]
    \definecolor{regA}{RGB}{86,180,233}   
    \definecolor{regB}{RGB}{230,159,0}    
    \coordinate (C)  at (-0.30, 0.00);
    \coordinate (V1) at (-0.30, 0.45);
    \coordinate (V2) at (-0.30, 1.35);
    \coordinate (V3) at ( 0.18, 2.42);
    \fill[regA!16] (C)--(-3.75,0)--(-3.75,3.1)--(V3)--(V2)--(V1)--cycle;
    \fill[regB!16] (C)--( 3.75,0)--( 3.75,3.1)--(V3)--(V2)--(V1)--cycle;
    \fill[regA!8] (-0.30,0)--(-3.75,0)--(-3.75,-1.90)--(-0.30,-1.90)--cycle;
    \fill[regB!8] (-0.30,0)--( 3.75,0)--( 3.75,-1.90)--(-0.30,-1.90)--cycle;
    \draw[very thick] (-3.75,0) -- (3.75,0);
    \node[below left,inner sep=3pt] at (-3.75,0) {$\wall$};
    \draw[->,black,thick] (3.30,0) -- (3.30,-0.55) node[below,inner sep=3pt] {$\bv{\nu}_{\wall}$};
    \draw[very thick] (C)--(V1)--(V2)--(V3);
    \foreach \p in {V1,V2,V3}{\fill (\p) circle (1.6pt);}
    \fill (C) circle (1.6pt);
    \node[above right,inner sep=3pt] at (V3) {$\polygon{i}$};
    \draw (C)+(0.16,0) -- +(0.16,0.16) -- +(0,0.16);
    \foreach \a/\b in {C/V1,V2/V3}{
      \coordinate (m) at ($(\a)!0.5!(\b)$);
      \draw[gray!55] (m)+(-1.6pt,-1.6pt) -- +(1.6pt,1.6pt);
      \draw[gray!55] (m)+(-1.6pt,1.6pt) -- +(1.6pt,-1.6pt);
    }
    \coordinate (M)   at ($(V1)!0.5!(V2)$);
    \coordinate (yLm) at (-1.65, 0.90);  
    \coordinate (zLm) at (-2.80, 0.90);  
    \coordinate (yRp) at ( 1.15, 0.90);  
    \coordinate (zRp) at ( 2.30, 0.90);  
    \coordinate (yLmM) at (-1.65,-0.90);
    \coordinate (zLmM) at (-2.80,-0.90);
    \coordinate (yRpM) at ( 1.15,-0.90);
    \coordinate (zRpM) at ( 2.30,-0.90);
    \foreach \a/\b in {yLm/yLmM,zLm/zLmM,yRp/yRpM,zRp/zRpM}{\draw[dashed,gray!55] (\a)--(\b);}
    \draw (M)+(-1.9pt,-1.9pt) -- +(1.9pt,1.9pt);
    \draw (M)+(-1.9pt,1.9pt) -- +(1.9pt,-1.9pt);
    \draw[->,black,thick] (M) -- ++(-0.55,0) node[above,black,inner sep=3pt] {$\vertexNormal{i}{j}$};
    \foreach \p in {yLm,zLm,yRp,zRp}{\fill[white] (\p) circle (2.0pt); \draw (\p) circle (2.0pt);}
    \foreach \p in {yLmM,zLmM,yRpM,zRpM}{\draw[gray!70] (\p) circle (2.0pt);}
    \node[above,inner sep=4pt] at (yLm) {$\singularPoint{\regionindexm{i}}{i}{j}$};
    \node[above,inner sep=4pt] at (zLm) {$\singularPointDummy{\regionindexm{i}}{i}{j}$};
    \node[above,inner sep=4pt] at (yRp) {$\singularPoint{\regionindexp{i}}{i}{j}$};
    \node[above,inner sep=4pt] at (zRp) {$\singularPointDummy{\regionindexp{i}}{i}{j}$};
    \node[below,gray!55!black,inner sep=4pt] at (yLmM) {$\bv{y}^{(\regionindexm{i})\ast}_{i,j}$};
    \node[below,gray!55!black,inner sep=4pt] at (zLmM) {$\bv{z}^{(\regionindexm{i})\ast}_{i,j}$};
    \node[below,gray!55!black,inner sep=4pt] at (yRpM) {$\bv{y}^{(\regionindexp{i})\ast}_{i,j}$};
    \node[below,gray!55!black,inner sep=4pt] at (zRpM) {$\bv{z}^{(\regionindexp{i})\ast}_{i,j}$};
    \node[right=4pt] at (V1) {$\vertex{i}{j-1}$};
    \node[right=4pt] at (V2) {$\vertex{i}{j}$};
    \node[right=4pt] at (M)  {$\vertexCenter{i}{j}$};
    \node at (-3.1, 0.36) {$\regionDiscrete{\regionindexp{i}}$};
    \node at ( 3.1, 0.36) {$\regionDiscrete{\regionindexm{i}}$};
    \node at (-3.30, 2.35) {$\halfspace{2}$};
  \end{tikzpicture}
  \caption{Half-space charge-simulation construction near the Neumann wall $\wall$.}
  \label{fig:hs-schematic}
\end{figure}

\subsection{Two distinct solves: charge field and constrained velocity}\label{subsec:hs-velocity}

\paragraph{Charge-field solve.}
The coefficients $c^{(r)}_p$ and $\charge{r}{p}{i,j}$ are obtained from the collocation system of Section~\ref{sec:fully-discrete-scheme} with the imaged blocks \eqref{eq:hs-G} and the placement \eqref{eq:hs-source}. Writing $A_1$ for the two-sided Gibbs--Thomson \eqref{eq:GTLS} and continuity \eqref{eq:CCLS} rows (right-hand side $\bv{b}_1$) and $C$ for the aggregate per-phase area rows \eqref{eq:APLS} (one per bounded phase, $I_P-1$ in total), the half-space solve reads
\begin{equation}\label{eq:hs-charge}
    \min_{\bv{q}\in\R{\unknownsNum}}\ \left\{\bigl\|A_1\bv{q}-\bv{b}_1\bigr\|_2^2 + \lambda_D\,W_D^2\,\bigl\|D\bv{q}\bigr\|_2^2\biggm|C\bv{q}=\bv{0}\right\}
\end{equation}
with the constraint enforced exactly by the orthogonal projection \eqref{eq:proj}. The block $D$ collects, per collocation edge, the flux-balance residual: the inconsistency between the two phase representations of the normal velocity \eqref{eq:Vij}. This block is the departure from the whole-space scheme, which assembles no motion-law row; it is penalized rather than enforced, with $W_D$ rescaling the rows of $D$ to the magnitude of $A_1$ so that $\lambda_D=0.1$ acts as a dimensionless relative weight keeping the Gibbs--Thomson/continuity fit dominant. From the reconstructed field the edge-center normal velocities $v_{i,j}$ are recovered as in \eqref{eq:Vij}.

\paragraph{Constrained velocity solve.}
A separate reconstruction stage takes the $v_{i,j}$ as data and produces the vertex velocities and the endpoint motion; the wall and junction constraints below are not rows of the charge-field system. Each open-curve endpoint is a triple-junction node, a wall contact ($x_2=0$), or free; let $I_W$, $I_T$, and $I_A$ count the mobile wall contacts, the trivalent junctions, and the area hard rows. The unknowns are the per-curve vertex velocities $\bv{v}_{i,j}$ and, for each triple junction $t$, a shared junction velocity $\bv{V}_t\in\R{2}$ and a scalar angular rate $\omega_t$ (every junction-incident endpoint moves with $\bv{V}_t$). The stage minimizes the fidelity/redistribution objective of \Section{subsec:hs-manifold} under the hard wall, junction, and area rows:
\begin{equation}\label{eq:hs-kkt}
\boxed{\;
\begin{aligned}
\min_{\{\bv{v}_{i,j}\},\,\{\bv{V}_t,\omega_t\}}\;
  & \left\{\textstyle\sum_i\bigl(\lVert F_i\rVert_2^2 + \lVert R_i\rVert_2^2\bigr) + \gamma\lVert G\rVert_2^2 + \lambda_{\mathrm{reg}}\lVert\bv{v}\rVert_2^2\right\}
  && \text{\small(soft rows; \S\ref{subsec:hs-manifold})}\\[-1pt]
\text{s.t.}\;
  & \bv{v}_c\cdot\bv{e}_2=0,\;\; (\bv{v}_c-\bv{v}_a)\cdot\bv{e}_1=0
  && \text{\small(wall contact, $90^\circ$)}\\
  & \tfrac{1}{\ell}(\rotation\bv{t}_{\mathrm{away}})\cdot(\bv{v}_a-\bv{V}_t)=\omega_t
  && \text{\small(junction incidence, $120^\circ$)}\\
  & \tfrac{\mathrm{d}}{\mathrm{d}t}\sum_{r\in\regionPhaseMap^{-1}(p)}\lvert\regionDiscrete{r}(t)\rvert=0
  && \text{\small(bounded region)}
\end{aligned}\;}
\end{equation}
Here $\bv{v}_c$ is a contact velocity, $\bv{v}_a$ that of the adjacent interior vertex, $\ell$ the junction-to-neighbor distance, $\bv{t}_{\mathrm{away}}$ the unit tangent pointing away from the junction, and $\rotation$ the $90^\circ$ rotation. Table~\ref{tab:hs-rows} lists the rows: the two wall rows are the velocity-level form of \eqref{eq:hs-ortho}; the shared $\omega_t$ across a junction's three incidences is an equal-angular-rate condition preserving the $120^\circ$ balance of \Remark{rem:young}; the soft families $F_i,R_i,G$ are defined in \Section{subsec:hs-manifold}; and a phase composed of several bounded regions is assigned one aggregate row $C\bv{q}=\bv{0}$ of \eqref{eq:hs-charge}.
The reconstructed $\bv{v}_{i,j}$ advance the vertices, and the junction-incident endpoints move with their $\bv{V}_t$.
The hard rows sum to
\begin{equation}\label{eq:hs-count}
    n_c = I_A + 2\,I_W + 3\,I_T ,
\end{equation}
with $I_A$ one row per constrained bounded region (or per multi-region phase in the aggregate case). In the assembled examples, each phase occupies a single region, so $I_A = I_R-1 = I_P-1$ and \eqref{eq:hs-count} gives $n_c = 18$ for the four-phase configuration of \Section{subsec:hs-validation}.

\begin{table}[tbp]
  \centering
  \footnotesize
  \setlength{\tabcolsep}{4.5pt}
  \begin{tabular}{l l l c l}
    \hline
    rows & type & definition & count & role \\
    \hline
    wall contact & hard & $\bv{v}_c\cdot\bv{e}_2=0,\ (\bv{v}_c-\bv{v}_a)\cdot\bv{e}_1=0$ & $2I_W$ & $90^\circ$ contact \eqref{eq:hs-ortho} \\
    junction incidence & hard & $\tfrac{1}{\ell}(\rotation\bv{t}_{\mathrm{away}})\cdot(\bv{v}_a-\bv{V}_t)=\omega_t$ & $3I_T$ & shared $\omega_t$ keeps $120^\circ$ \\
    area rate & hard & $\tfrac{\mathrm{d}}{\mathrm{d}t}\lvert\regionDiscrete{r}(t)\rvert=0$ & $I_A$ & phase-area conservation \\
    fidelity $F_i$ & soft, $1$ & $\widehat{\bv{n}}_{i,k}\cdot\bv{v}_{i,k}=v^n_{i,k}$ & per vertex & normal-motion fidelity \\
    compatibility $G$ & soft, $\sqrt{\gamma}$ & $\bv{n}_{i,\mathrm{adj}}\cdot\bv{V}_t=v^n_{i,\mathrm{adj}}$ & per incidence & junction--arc coupling \\
    equidistribution $R_i$ & soft, $\beta_0$ & \eqref{eq:hs-equidist} & per edge & tangential mesh regularity \\
    Tikhonov & soft, $\lambda_{\mathrm{reg}}$ & $\lambda_{\mathrm{reg}}\lVert\bv{v}\rVert_2^2$ & once & positive-definite block \\
    \hline
  \end{tabular}
  \caption{Hard and soft rows of the constrained velocity solve \eqref{eq:hs-kkt}
    ($\gamma=1$, $\beta_0=10$, $\lambda_{\mathrm{reg}}=10^{-8}$; the hard counts sum
    to $n_c$ \eqref{eq:hs-count}).}
  \label{tab:hs-rows}
\end{table}

\paragraph{Region closure at the wall.}
The area-rate row needs the region's oriented boundary polygon, assembled by an incidence-driven traversal that chains the boundary curves through their shared junction nodes and closes a wall-touching region with a wall chord between its two contacts. The validated class is that of \Section{sec:halfspace}: one contiguous wall interval, bounded by two consecutive contacts, per bounded wall-touching region.

\subsection{Tangential redistribution and on-manifold endpoint update}\label{subsec:hs-manifold}
The hard rows of \eqref{eq:hs-kkt} constrain the velocities to be tangent to the wall
and junction constraint manifolds; the endpoint update below then advances the
positions on those manifolds through explicit charts (Figure~\ref{fig:hs-charts}).
The $90^\circ$ and $120^\circ$ angles therefore hold to machine precision at every
stage of the time step, independently of the step size.

\paragraph{Soft rows: fidelity and redistribution.}
The soft rows of \eqref{eq:hs-kkt} are collected in Table~\ref{tab:hs-rows}: the fidelity
rows $F_i$ pin the normal component of each owned velocity to the reconstructed raw value
($k=2,\dots,N-1$ on an open curve, every vertex on a closed one), the compatibility rows
$G$ tie each junction velocity $\bv{V}_t$ to the incident normal velocities, the
equidistribution rows $R_i$ drive the edge rates of each curve toward uniformity,
\begin{equation}\label{eq:hs-equidist}
    \dot\ell_e - \frac{\dot L}{N_e} = \beta\left(\bar\ell - \ell_e\right),\qquad
    \bar\ell := \frac{L}{N_e},
\end{equation}
with $\ell_e$ the edge lengths, $L$ the curve length, $N_e$ the number of edges
($N-1$ open, $N$ closed), and $\beta=\beta_0\,\max_i N_i$ ($\beta_0$ dimensionless,
distinct from the exponent in \eqref{eq:charge}), and a fixed Tikhonov term makes the
owned-vertex block positive-definite without perturbing the physical velocity. All are
penalties, not hard equalities: \eqref{eq:hs-equidist} is the velocity-level, curve-wide
analogue of the uniform-distribution method of \Section{sec:fully-discrete-scheme}, and
the solve targets the raw normal velocity while selecting a tangential redistribution
under the hard rows.

\paragraph{On-manifold endpoint charts.}
This update replaces the post-hoc geometric triple-junction correction of
\Section{sec:fully-discrete-scheme}. Each trivalent junction is equipped with a chart
$(\bv{J},\psi,\{\varphi_k,\ell_k\}_{k=1}^3)$, consisting of a junction point, a shared
frame angle, and, per incident ray, a fixed angular offset and an evolving length
(Figure~\ref{fig:hs-charts}(a)); the incident endpoint is reconstructed as
\begin{equation}\label{eq:hs-tjchart}
    \bv{X}_k = \bv{J} + \ell_k\bigl(\cos(\psi+\varphi_k),\,\sin(\psi+\varphi_k)\bigr);
\end{equation}
the offsets $\varphi_k$ being fixed, the $120^\circ$ balance is exact for all $t$. Each
wall contact is equipped with a chart $(X,h)$ (Figure~\ref{fig:hs-charts}(b)),
\begin{equation}\label{eq:hs-wallchart}
    \bv{X}_{\mathrm{contact}} = (X,\,0)^\top,\qquad \bv{X}_{\mathrm{adj}} = (X,\,h)^\top,
\end{equation}
so the contact edge is vertical and the $90^\circ$ angle is exact.
A two-stage Heun update advances the chart coordinates (a predictor at $t$, a corrector
at the tentative state, the rates averaged); the constraints
\eqref{eq:hs-tjchart}--\eqref{eq:hs-wallchart} hold to machine precision at each stage.
The update is formally second order (Heun/RK2).

\begin{figure}[H]
  \centering
  \begin{tikzpicture}[>={Stealth[length=2.2mm]}, line join=round, scale=1.05]
    \coordinate (J) at (0,0);
    \draw[dashed,gray!60] (J) -- (2.0,0);
    \draw[->,gray!80] (J) -- (20:1.60);
    \draw[gray!80] (1.10,0) arc[start angle=0, end angle=20, radius=1.10];
    \node[gray!50!black,inner sep=2pt] at (10:1.42) {$\psi$};
    \coordinate (X1) at (90:1.45);
    \coordinate (X2) at (210:1.15);
    \coordinate (X3) at (330:1.80);
    \foreach \p in {X1,X2,X3}{\draw[very thick] (J) -- (\p);}
    \draw[gray!80] (20:0.55) arc[start angle=20, end angle=90, radius=0.55];
    \node[gray!50!black] at (57:0.82) {$\varphi_1$};
    \draw[gray!55] (X1) -- ++(80:0.50) -- ++(60:0.50);
    \draw[gray!55] (X2) -- ++(215:0.45) -- ++(240:0.45);
    \draw[gray!55] (X3) -- ++(320:0.50) -- ++(300:0.50);
    \fill (J) circle (1.8pt);
    \foreach \p in {X1,X2,X3}{\fill (\p) circle (1.6pt);}
    \node[above right=1pt,fill=white,inner sep=1pt] at (J) {$\bv{J}$};
    \node[left,inner sep=3pt] at (X1) {$\bv{X}_1$};
    \node[below,inner sep=3pt] at (X2) {$\bv{X}_2$};
    \node[above right,inner sep=2pt] at (X3) {$\bv{X}_3$};
    \node[left=3pt,inner sep=1pt] at (90:0.76) {$\ell_1$};
    \node[above=3pt,inner sep=1pt] at (210:0.68) {$\ell_2$};
    \node[above=3pt,inner sep=1pt] at (330:0.94) {$\ell_3$};
    \draw[->] (235:0.42) arc[start angle=235, end angle=305, radius=0.42];
    \node[below,inner sep=1pt] at (270:0.58) {$\omega_t$};
    \node at (0,-2.15) {(a)};
    \begin{scope}[shift={(5.6,-0.55)}]
      \draw[very thick] (-1.9,0) -- (2.1,0);
      \node[below,inner sep=3pt] at (-1.55,0) {$\wall$};
      \coordinate (C) at (0,0);
      \coordinate (A) at (0,1.30);
      \draw[very thick] (C) -- (A);
      \draw[gray!55] (A) -- ++(75:0.55) -- ++(55:0.55);
      \draw (0.16,0) -- (0.16,0.16) -- (0,0.16);
      \fill (A) circle (1.6pt);
      \fill[white] (C) circle (2.0pt); \draw (C) circle (2.0pt);
      \node[below,inner sep=5pt] at (C) {$(X,\,0)$};
      \node[right,inner sep=3pt] at (A) {$(X,\,h)$};
      \draw[<->] (-0.85,-0.60) -- (0.85,-0.60) node[midway,below,inner sep=3pt] {$\dot X$};
      \draw[->] (-0.35,0.70) -- (-0.35,1.30) node[midway,left,inner sep=3pt] {$\dot h$};
      \node at (0,-1.60) {(b)};
    \end{scope}
  \end{tikzpicture}
  \caption{On-manifold endpoint charts: (a) the triple-junction chart
    \eqref{eq:hs-tjchart}, with fixed ray offsets $\varphi_k$; (b) the wall-contact
    chart \eqref{eq:hs-wallchart}, with a vertical contact edge.}
  \label{fig:hs-charts}
\end{figure}

\paragraph{Adaptive time step.}
Unlike the whole-space runs of Section~\ref{sec:fully-discrete-scheme},
which use the fixed rule $\Delta t=c\,N^{-2}$, we choose the time step adaptively in the half-space runs as
\begin{equation}\label{eq:hs-dt-rule}
    \Delta t = \min\left\{c_{\mathrm{CFL}}\,\frac{\min_{i,j}\ell_{i,j}}{\max_\Gamma|V|},\ \Delta t_{\max}\right\},
\end{equation}
where $\min_{i,j}\ell_{i,j}$ is the smallest polygonal edge length in the network, $\max_\Gamma|V|$ the maximum discrete normal speed, $c_{\mathrm{CFL}}>0$ a Courant constant, and $\Delta t_{\max}$ a cap (in the runs $c_{\mathrm{CFL}}=6.25\times10^{-3}$ and $\Delta t_{\max}=10^{-3}$). The step \eqref{eq:hs-dt-rule} contracts when a high-curvature feature drives up $\max_\Gamma|V|$ and relaxes as the network smooths.

\section{Structural properties of the discrete scheme}\label{sec:analysis}

We now show that the projected least-squares solve of
Section~\ref{sec:fully-discrete-scheme} preserves, at the discrete level, the
area-conservation structure of the continuous flow (Proposition~\ref{prop:ap}). For the
$p$-th bounded phase, we define a linear functional of the unknown vector $\bv{q}$
approximating the variation of area functional:
\begin{equation}\label{eq:analysis-Jp}
    J_p(\bv{q}) := \sum_{r\in \regionPhaseMap^{-1}(p)}\ \sum_{i\in \curveRegionMap^{-1}(r)} o_{r,i}\sum_{j=1}^{\vertexNum{i}} r_{i,j}\, v_{i,j}(\bv{q})
    \approx \int_\Gamma -V_{in}\,\dH{1},
\end{equation}
where the orientation sign $o_{r,i}$ is recalled from \eqref{eq:orientation},
and $v_{i,j}(\bv{q})$ is the approximate 
velocity in \eqref{eq:Vij}.
The area rows of the linear system are $C\bv{q}=\bv{0}$ with $C$ the $(I_P-1)\times \unknownsNum$ matrix,
where $\unknownsNum$ is the total number of unknowns of the linear system \eqref{eq:num-unknowns},
whose $p$-th row coincides with $J_p$ up to an overall sign, and the projector $P$
of \eqref{eq:proj} is used to solve the constrained least-squares problem
\eqref{eq:pcgls}.

Two structural facts underlie the statements
below. First, by construction the rows of $C$ are assembled from the same kernel
evaluations and the same $\phaseindexm{i}$-representation \eqref{eq:Vij} of the
normal velocity that is later used to advance the curves. Thus the two zero constraints
are exactly equivalent; their row conventions differ only by an overall sign. Second, we
assume throughout this section the non-degeneracy condition:
\begin{equation*}
    \operatorname{rank} C = I_P - 1, \tag{ND}\label{eq:ND}
\end{equation*}
so that the matrix $CC^\top \in\R{(I_P-1)\times(I_P-1)}$ is regular;
the redundancy of a further area row is explained by Lemma~\ref{lem:redundant} below.

\begin{prop}[Solvability of the projected charge solve]\label{prop:pls}
Assume \eqref{eq:ND}. Then, the following statements hold:
\begin{enumerate}
\item[(i)] $P = I - C^\top(CC^\top)^{-1}C$ is well-defined, symmetric, and is the
      orthogonal projector onto $\ker C$;
\item[(ii)] $C\bv{q}=\bv{0}$ holds if and only if $\bv{q}=P\bv{y}$
      for some $\bv{y}$, and the constrained problem
      $$\min_{\bv{q}}\ \left\{\|A_1\bv{q}-\bv{b}_1\|_2^2 \biggm| C\bv{q}=\bv{0}\right\}$$ is equivalent to the projected
      problem \eqref{eq:pcgls}, in the sense that the optimal values coincide and
      $\bv{q}^\star$ solves the former if and only if $\bv{q}^\star = P\bv{y}^\star$ for a
      minimizer $\bv{y}^\star$ of the latter;
\item[(iii)] the set of all solutions to the constrained problem is not empty, an affine subspace of
      $\ker C$, and it contains a unique element of minimal Euclidean norm.
\end{enumerate}
\end{prop}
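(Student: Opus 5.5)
The proof is elementary linear algebra, and I will take the three items in order, using only \eqref{eq:ND} and the definitions \eqref{eq:proj}, \eqref{eq:pcgls}.

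\emph{Item (i).} Since $C\in\R{(I_P-1)\times\unknownsNum}$ has full row rank by \eqref{eq:ND}, the Gram matrix $CC^\top$ is symmetric positive definite. Indeed, $\bv{x}^\top CC^\top\bv{x}=\|C^\top\bv{x}\|_2^2$, and $C^\top$ is injective. Hence $(CC^\top)^{-1}$ exists and $P$ is well-defined. Symmetry follows at once because $(CC^\top)^{-1}$ is symmetric. A direct computation gives $P^2=P$, since the middle term $C^\top(CC^\top)^{-1}CC^\top(CC^\top)^{-1}C$ collapses to $C^\top(CC^\top)^{-1}C$. It also gives $CP=C-C=0$, so $\range{P}\subset\ker C$. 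Conversely, if $C\bv{q}=\bv{0}$ then $P\bv{q}=\bv{q}$, so $\range{P}=\ker C$. A symmetric idempotent is the orthogonal projector onto its range, which proves (i).

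\emph{Item (ii).} The characterization $C\bv{q}=\bv{0}\iff\bv{q}=P\bv{y}$ for some $\bv{y}$ is just $\range{P}=\ker C$ from (i). For the equivalence of the two problems, I note that the map $\bv{y}\mapsto P\bv{y}$ sends $\R{\unknownsNum}$ onto the feasible set $\ker C$. The two objectives are related by $\|A_1P\bv{y}-\bv{b}_1\|_2^2 = \|A_1\bv{q}-\bv{b}_1\|_2^2$ with $\bv{q}=P\bv{y}$. Therefore the infima over $\bv{y}$ and over feasible $\bv{q}$ coincide. If $\bv{y}^\star$ minimizes the projected problem, then $P\bv{y}^\star$ is feasible and attains the common value, so it solves the constrained problem. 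Conversely, if $\bv{q}^\star$ solves the constrained problem, then $\bv{y}^\star:=\bv{q}^\star$ satisfies $P\bv{y}^\star=\bv{q}^\star$ and attains the common value, so $\bv{y}^\star$ minimizes the projected problem.

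\emph{Item (iii).} This is the only step needing care: I must show that a minimizer exists, not merely that the infimum is finite. I would reduce to an unconstrained least-squares problem for the matrix $B:=A_1P$ and rely on the standard fact that linear least squares always attains its minimum. Concretely, $\bv{b}_1$ decomposes orthogonally into a component in $\range{B}$ and one in $(\range{B})^\perp$, and the component in $\range{B}$ is attained since $\range{B}$ is a finite-dimensional, hence closed, subspace. The projected problem is then minimized exactly by the solutions of the normal equations $B^\top B\bv{y}=B^\top\bv{b}_1$, which form a nonempty affine subspace $\bv{y}_0+\ker B$. Mapping this set by $P$ gives the constrained solution set $P\bv{y}_0+P\ker B$, again nonempty and affine, and contained in $\ker C$. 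Its direction space is $\ker A_1\cap\ker C$, which I would identify by noting that two feasible minimizers differ by an element of $\ker C$ on which $A_1$ vanishes. Finally, a nonempty closed affine subspace of a Euclidean space has a unique point of minimal norm. That point is the orthogonal projection of $\bv{0}$ onto the subspace, and uniqueness follows from strict convexity of $\|\cdot\|_2^2$. This completes (iii).
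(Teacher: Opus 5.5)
Your proof is correct and follows the paper's argument essentially step for step: you use the same Gram-matrix and idempotence computation for (i), the same substitution $\bv{q}=P\bv{y}$ with $P$ mapping onto $\ker C$ for (ii), and the same reduction of (iii) to an unconstrained least-squares problem whose solution set is pushed forward by $P$. The only difference is that in (iii) you justify attainment explicitly through the orthogonal decomposition of $\bv{b}_1$ and the normal equations, and you also identify the direction space as $\ker A_1\cap\ker C$, whereas the paper simply invokes the fact that a convex quadratic bounded below attains its infimum.
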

\begin{rem}
    \label{rem:min-solvability}
In Proposition~\ref{prop:pls}, we need no rank assumption on $A_1$. In particular, the statement covers the
overdetermined and rank-deficient Gibbs--Thomson/continuity block that occurs in
practice.
\end{rem}
\begin{proof}
(i) Under \eqref{eq:ND}, the matrix $CC^\top$ is symmetric positive definite, and hence
it is invertible, and $P$ is well-defined and symmetric.
A direct computation gives $P^2=P$
and $CP=0$, and thus $\range{P}\subseteq\ker C$; conversely $C\bv{q}=\bv{0}$ gives
$P\bv{q}=\bv{q}$, whence $\range{P}=\ker C$, and $P=P^\top$ makes it the
orthogonal projector.

(ii) The characterization of the feasible set is (i); substituting
$\bv{q}=P\bv{y}$ turns the constrained problem into the projected one, and conversely any
minimizer $\bv{y}^\star$ yields the feasible point $\bv{q}^\star=P\bv{y}^\star$ with the same
objective, while every feasible $\bv{q}$ equals $P\bv{q}$ and so cannot do better.

(iii) The projected problem is an unconstrained linear least-squares problem, whose objective is a
convex quadratic bounded below and therefore attains its infimum; the image of its
solution set under $\bv{y}\mapsto P\bv{y}$ is the constrained solution set, a nonempty affine
subspace of $\ker C$, which contains a unique element of minimal norm.
\end{proof}

The same projector argument applies to the half-space charge-field solve of
Section~\ref{sec:halfspace}: reading the objective matrix as
$[\,A_1;\ \sqrt{\lambda_D}\,W_D D\,]$ and $C$ as its aggregate per-phase area rows,
Proposition~\ref{prop:pls} still holds. The subsequent half-space velocity and endpoint
reconstruction is a separate constrained solve, whose per-region area, wall-contact,
and junction constraints are imposed there (Section~\ref{subsec:hs-velocity}), not through
this projector.

\begin{rem}\label{rem:cgls-feasible}
The implementation solves \eqref{eq:pcgls} by CGLS with zero initial guess. In exact
arithmetic the iterates $\bv{y}_k$ then lie in the Krylov subspaces generated by
$(A_1P)^\top = PA_1^\top$, hence in $\range{P}=\ker C$; therefore
$\bv{q}_k:=P\bv{y}_k=\bv{y}_k$ and $C\bv{q}_k=\bv{0}$ for every $k$, regardless of when the
iteration is stopped. This feasibility of every iterate is the property used in
Proposition~\ref{prop:velocity-level}. Thus, feasibility of the area constraints is
independent of the stopping index and of the Gibbs--Thomson/continuity residual.
\end{rem}

\begin{lem}[One area constraint is redundant]\label{lem:redundant}
The linear functional $$\R{\unknownsNum}\ni\bv{q}\longmapsto\sum_{p=1}^{I_P} J_p(\bv{q})\in\R{}$$
is the zero map. Consequently, at most $I_P-1$ of $J_1,\dots,J_{I_P}$ are linearly independent, and enforcing $J_p(\bv{q})=0$ for
$p=1,\dots,I_P-1$ implies that $J_{I_P}(\bv{q})=0$.
\end{lem}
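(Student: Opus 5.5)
The identity is purely combinatorial: it should follow from a reindexing of the triple sum in \eqref{eq:analysis-Jp}, with no property of the kernels or of the charges $\bv{q}$ used. Each $v_{i,j}(\bv{q})$ is one fixed linear functional of $\bv{q}$, taken in the $\phaseindexm{i}$-representation of \eqref{eq:Vij}. So it suffices to show that, after summing over all phases $p=1,\dots,I_P$, the coefficient of each product $r_{i,j}v_{i,j}(\bv{q})$ vanishes.

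\textbf{Step 1 (phases to regions).} Write
\[
\sum_{p=1}^{I_P}J_p(\bv{q})=\sum_{p=1}^{I_P}\sum_{r\in\regionPhaseMap^{-1}(p)}\sum_{i\in\curveRegionMap^{-1}(r)}o_{r,i}\sum_{j=1}^{\vertexNum{i}}r_{i,j}v_{i,j}(\bv{q}).
\]
The map $\regionPhaseMap$ is defined on all of $\naturalset{I_R}$, so the fibres $\regionPhaseMap^{-1}(p)$, $p\in\naturalset{I_P}$, partition $\naturalset{I_R}$. The double sum over $p$ and $r$ therefore collapses to a single sum over $r\in\naturalset{I_R}$. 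The unbounded region $\region{I_R}$ is included through $p=I_P$.

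\textbf{Step 2 (regions to curves).} Exchange the order of the sums over $r$ and $i$. Here $\curveRegionMap^{-1}(r)$ is read as in the paper: the set of curves $i$ with $r\in\{\regionindexm{i},\regionindexp{i}\}$. By the curve-to-region assumption, each curve $i$ appears for exactly two regions, $\regionindexm{i}$ and $\regionindexp{i}$, and these are distinct because $\phaseindexm{i}\neq\phaseindexp{i}$. The sum then becomes
\[
\sum_{i=1}^{I_S}\bigl(o_{\regionindexm{i},i}+o_{\regionindexp{i},i}\bigr)\sum_{j=1}^{\vertexNum{i}}r_{i,j}v_{i,j}(\bv{q}).
\]
By \eqref{eq:orientation} we have $o_{\regionindexm{i},i}=+1$ and $o_{\regionindexp{i},i}=-1$, so every bracket is $0$. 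The essential point is that $v_{i,j}$ is a single quantity attached to the edge. It does not depend on which region or phase is summing it, because the same $\phaseindexm{i}$-representation is used in all rows. Hence the two contributions cancel exactly and not merely approximately.

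\textbf{Step 3 (consequences).} Since $\sum_p J_p=0$, the functionals $J_1,\dots,J_{I_P}$ are linearly dependent, so at most $I_P-1$ of them are independent. Moreover $J_{I_P}=-\sum_{p<I_P}J_p$, which gives the implication stated in the lemma.

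The main obstacle is bookkeeping in Step 2, where I would be careful about two things. First, the region-wise sum must genuinely range over both adjacent regions of every curve, with no curve double-counted for the same region; this uses that curves do not repeat within $\curveRegionMap^{-1}(r)$. Second, the signs $o_{r,i}$ in \eqref{eq:analysis-Jp} must agree with the sign convention of the rows \eqref{eq:APLS}, which they do up to an overall sign. In the half-space case, wall chords carry no $v_{i,j}$ term in $J_p$, so the argument is unchanged.
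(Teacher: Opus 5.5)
Your proof is correct and follows the paper's argument. Both rest on three facts: $\regionPhaseMap$ distributes every region to exactly one phase; each curve bounds exactly two regions, with orientation signs $o_{\regionindexm{i},i}=+1$ and $o_{\regionindexp{i},i}=-1$; and a single $\phaseindexm{i}$-representation of $v_{i,j}$ is used on both sides, so every term $r_{i,j}v_{i,j}$ cancels. The paper counts edge by edge rather than writing out the reindexing of the sums, but the content is the same.
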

\begin{proof}
Fix an edge $\edge{i}{j}$ for some $i\in\naturalset{I_S}$ and $j\in\naturalset{N_i}$.
The curve $\polygon{i}$ appears in the boundary of exactly its two
adjacent regions $\regionDiscrete{\regionindexm{i}}$ and $\regionDiscrete{\regionindexp{i}}$, with orientation signs
$o_{\regionindexm{i},i}=+1$ and $o_{\regionindexp{i},i}=-1$.
Since the value $v_{i,j}$ is used per edge, it is independent of the side from which the curve is viewed
\eqref{eq:Vij}; both regions contribute the same term $r_{i,j}v_{i,j}$ to
\eqref{eq:analysis-Jp} once with sign $+1$ through the phase $\regionPhaseMap(\regionindexm{i})$
and once with sign $-1$ through $\regionPhaseMap(\regionindexp{i})$. Summing
\eqref{eq:analysis-Jp} over all $p=1,\dots,I_P$ counts every region once, hence every term
$r_{i,j}v_{i,j}$ once with each sign, and the total vanishes identically.
\end{proof}

\begin{rem}
Lemma~\ref{lem:redundant} is purely combinatorial. Indeed, in its proof, we use only that $\regionPhaseMap$ assigns each region
to exactly one phase, that each curve bounds exactly two regions, and that a single
velocity representation per edge is used on both sides.
It explains the reason that the proposed scheme assembles area rows only for $p=1,\dots,I_P-1$.
In other words, a row for $p=I_P$ would make $C$ rank
deficient by construction and $(CC^\top)^{-1}$ undefined. It also shows that the
constraint for the unbounded phase is then automatically satisfied.
\end{rem}

\begin{prop}[Exact area conservation at the velocity level]\label{prop:velocity-level}
Assume \eqref{eq:ND}. Let $\bv{q}$ be any CGLS iterate of the projected problem
\eqref{eq:pcgls} with zero initial guess; in particular, $\bv{q}$ may be the computed
solution, whatever the truncation index, the stopping tolerance, or the size of the
Gibbs--Thomson/continuity residual. Then, in exact arithmetic,
\begin{equation*}
    J_p(\bv{q}) = 0 \qquad\text{for all}\quad p=1,\dots,I_P,
\end{equation*}
i.e.\ the reconstructed normal velocity field has exactly zero net flux through the
boundary of every phase, including the omitted unbounded phase.
\end{prop}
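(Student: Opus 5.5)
The argument has two stages. First I show that the bounded-phase functionals vanish on every CGLS iterate. Then I obtain the unbounded phase from the combinatorial identity of Lemma~\ref{lem:redundant}.

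\textbf{Stage 1: feasibility of every iterate.} I would invoke Remark~\ref{rem:cgls-feasible}. Since the initial guess is zero, every CGLS iterate $\bv{y}_k$ for \eqref{eq:pcgls} lies in the Krylov space $\mathcal{K}_k\bigl((A_1P)^\top A_1P,\,(A_1P)^\top\bv{b}_1\bigr)$. Every generator of this space has the form $PA_1^\top(\cdots)$, because $(A_1P)^\top = P^\top A_1^\top = PA_1^\top$ by the symmetry in Proposition~\ref{prop:pls}(i). Hence $\bv{y}_k\in\range{P}$. By Proposition~\ref{prop:pls}(i), under \eqref{eq:ND} we have $\range{P}=\ker C$, so $\bv{q}_k:=P\bv{y}_k=\bv{y}_k$ satisfies $C\bv{q}_k=\bv{0}$. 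This holds for every index $k$, independently of the stopping rule and of the residual.

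\textbf{Stage 2: the bounded phases.} Next I identify the rows of $C$ with the functionals $J_p$ for $p\le I_P-1$. Comparing \eqref{eq:APLS} with \eqref{eq:analysis-Jp}, the sign convention $o_{r,i}=+1$ for $r=\regionindexm{i}$ and $o_{r,i}=-1$ for $r=\regionindexp{i}$ shows that the $p$-th row of $C$ equals $-J_p$. Both are built from the same $v_{i,j}(\bv{q})$ in the $\phaseindexm{i}$-representation and the same kernel blocks $\vectorH{r}{i}{j}{\ell}{k}$, so they are the same linear functional up to sign. Therefore $J_p(\bv{q})=0$ for $p=1,\dots,I_P-1$.

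\textbf{Stage 3: the unbounded phase.} Lemma~\ref{lem:redundant} gives $\sum_{p=1}^{I_P}J_p\equiv 0$ on $\R{\unknownsNum}$. Hence $J_{I_P}(\bv{q})=-\sum_{p<I_P}J_p(\bv{q})=0$.

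\textbf{Main obstacle.} The delicate step is the Krylov-space claim in Stage 1. One must check that CGLS, applied to the operator $A_1P$, really keeps its iterates in $\range{P}$. This relies on every search direction being a combination of vectors $(A_1P)^\top\bv{s}$, which in turn requires $P$ to be exactly symmetric; that symmetry is what Proposition~\ref{prop:pls}(i) provides in exact arithmetic. A secondary point is confirming that the row convention of \eqref{eq:APLS} produces exactly $\pm J_p$ rather than a different linear combination. This should follow by direct inspection of the sign $o_{r,i}$.
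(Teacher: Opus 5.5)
Your proposal is correct and follows the paper's proof essentially step for step: every iterate is feasible by Remark~\ref{rem:cgls-feasible} and $CP=0$, the area rows are identified with the $J_p$, and Lemma~\ref{lem:redundant} handles the unbounded phase. Your sign check (row $=-J_p$) is more precise than the paper's stated assembly identity. Since the scheme defines $\bv{q}:=P\bv{y}$, the identity $C\bv{q}=CP\bv{y}=\bv{0}$ holds for every $\bv{y}$, so the Krylov argument you flag as the main obstacle is needed only to show that $\bv{y}_k$ itself is feasible.
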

\begin{proof}
By Remark~\ref{rem:cgls-feasible}, every iterate satisfies $\bv{q}=P\bv{y}$, hence
$C\bv{q}=CP\bv{y}=\bv{0}$ since $CP=0$ (Proposition~\ref{prop:pls}(i)). By the assembly
identity $C\bv{q}=(J_1(\bv{q}),\dots,J_{I_P-1}(\bv{q}))^\top$ this is $J_p(\bv{q})=0$ for
$p=1,\dots,I_P-1$, and Lemma~\ref{lem:redundant} gives $J_{I_P}(\bv{q})=0$.
\end{proof}

\begin{rem}[Floating point]\label{rem:analysis-fp}
In floating-point arithmetic the identity holds up to the rounding of forming and applying
$P$, controlled by the condition number of the small $(I_P-1)\times(I_P-1)$ Gram matrix
$CC^\top$, which can be monitored at run time; the residual $|J_p(\bv{q})|$ is observed at
the level of $10^{-13}$.
\end{rem}

\begin{rem}[Velocity-level versus polygonal conservation]\label{rem:vel-vs-poly}
Proposition~\ref{prop:velocity-level} concerns the reconstructed velocity, not the
advanced polygon: once the vertices have moved by \eqref{eq:evolve} and the junctions have
been relocated, the shoelace phase areas are conserved only approximately, perturbed at
higher order in $h$ and $\Delta t$ by the angle-bisector update, the junction relocation,
and the finite step. This polygonal drift is the quantity $E_A$ reported in
Section~\ref{sec:experiments}: small, linear in the time step, and decreasing under
refinement.
\end{rem}

\begin{rem}[Conservation across a region disappearance]\label{rem:disappear}
The structural conservation of Proposition~\ref{prop:velocity-level} holds between
topological events: at a removal (Section~\ref{subsec:disappearance}) the configuration is
re-baselined and the projected solve again conserves every phase area, the only change
being the removal step, at which the affected phase loses exactly the residual area
($<\delta$) removed together with the deleted curve. A region of an unconstrained phase carries
no area row, so its disappearance leaves every constrained phase area unchanged.
\end{rem}

\section{Numerical experiments}\label{sec:experiments}
In this section, we carry out numerical experiments to assess the proposed scheme.
All numerical experiments reported here use equal surface tensions $\tension{i}=1$, except
for an unequal-tension check reported at the end of Section~\ref{subsec:whole-space}.
For each test case,  we monitor the following diagnostics at every time step
to measure how closely the quantities that are conserved by the continuous flow are preserved by the discretization.

Let $A_p(t)$ denote the area of the bounded phase $p$
(the sum of the areas of its regions).
Then, the \textit{relative phase-area error} of the area of the phase $p$ at time $t$ is defined by
\begin{equation}\label{eq:diag-area}
    E_{A_p}(t) := \frac{\left|A_p(t)-A_p(0)\right|}{A_p(0)}.
\end{equation}
For later use, we let $E_A(t) := \max_{1\leq p\leq I_P-1}E_{A_p}(t)$.
Moreover, we also evaluate the \textit{triple-junction angle deviation} defined by
\begin{equation}\label{eq:diag-tj}
    E_{\angle}(t) := \max_{k\in\naturalset{I_T}}\ \max_{\ell=1,2,3}\left|\theta_{\triplejunctionIndex{k}{\ell}}(t)-120\right|,
\end{equation}
where $\theta_{\triplejunctionIndex{k}{\ell}}(t)\,(\ell=1,2,3)$ denote the three angles (degrees) separating $360^\circ$ around each triple junction point $\triplejunction{k}\,(k\in\naturalset{I_T})$.
We note that for equal surface tensions, the equilibrium angle is $120^\circ$. 
We use the convention that $E_{\angle}(t) = 0$ in the case $I_T = 0$.
We also measure the \textit{discrete interfacial energy} (weighted total length)
\begin{equation}\label{eq:diag-length}
    L^h(t) := \sum_{i=1}^{I_S}\tension{i}\,\bigl|\polygon{i}(t)\bigr|
          = \sum_{i=1}^{I_S}\tension{i}\sum_{j=1}^{\vertexNum{i}} r_{i,j}
          \approx \int_\Gamma\, \tension{}\dH{1},
\end{equation}
which is non-increasing along the continuous flow (see Proposition~\ref{prop:csp}).

We finally describe the criterion that determines when each computation is stopped.
Except for the convergence test of Section~\ref{subsec:convergence}, which is integrated
to a fixed final time, each computation is run until the flow becomes
\textit{near-stationary}: with $v^{(n)}$ the maximum discrete normal speed \eqref{eq:Vij}
at step $n$, $v_\star^{(n)}$ its running peak, and
$\rho_L^{(n)} := |L^{(n)}-L^{(n-1)}|/(L^{(n)}\,\Delta t)$ the per-step relative rate of
the interfacial energy \eqref{eq:diag-length}, the computation is stopped at the first
step $n\ge n_{\min}$ for which
\begin{equation}\label{eq:stop-rule}
    v^{(n)} < \varepsilon_v\,v_\star^{(n)} \qquad\text{and}\qquad \rho_L^{(n)} < \varepsilon_L
\end{equation}
hold on $W$ consecutive steps ($\varepsilon_v = 5\times10^{-2}$, $\varepsilon_L = 10^{-2}$,
$W = n_{\min} = 50$), subject to a maximum-step and maximum-physical-time safety cap.
Several half-space configurations of Section~\ref{subsec:hs-validation} exhibit persistent
stiff high-curvature modes for which relaxation to stationarity is impractically slow;
these are reported as finite-horizon validations over a fixed horizon rather than to
\eqref{eq:stop-rule}.

Finally, we delimit the scope of what is claimed in this section. The convergence orders
quoted below are observed rates over the tested range of resolutions, and the explicit
time-step rule $\Delta t=cN^{-2}$ of the whole-space runs is an empirical choice; no
asymptotic order or stability threshold is claimed.

\subsection{Convergence against an exact three-phase solution}\label{subsec:convergence}

To quantify the accuracy of the whole-space scheme we use the exact
radially-symmetric solution of the three-phase Mullins--Sekerka flow constructed in
\cite[\S 8.1]{EGN24}. Let three concentric circles of radii $0<R_1(t)<R_2(t)<R_3(t)$,
centered at the origin, separate the plane into four regions: the inner disk
$B_{R_1}(0)$, the inner annulus $B_{R_2}(0)\setminus B_{R_1}(0)$, the outer annulus
$B_{R_3}(0)\setminus B_{R_2}(0)$, and the unbounded exterior. The three phases are assigned
as follows: phase~$1$ is the inner annulus, phase~$2$ is the outer annulus, and
phase~$3$ is the union of the inner disk and the unbounded exterior. Thus phase~$3$ is
a single phase occupying two disconnected regions, one of them unbounded, which
tests the multi-region and non-injective $\regionPhaseMap$ part of the scheme; the
two bounded phases whose area is conserved are the two annuli. The
computed radii, interface error, and energy history are shown in
Figure~\ref{fig:eto3circle-conv}.

Since the two annulus areas are conserved, the quantities
$D_2 := R_2(0)^2-R_1(0)^2$ and $D_3 := R_3(0)^2-R_2(0)^2$ are constant, and the radii
satisfy the differential-algebraic system
\begin{equation}\label{eq:eto-dae}
    R_1(t) = \sqrt{R_2(t)^2-D_2}, \quad R_3(t) = \sqrt{R_2(t)^2+D_3}, \quad
    R_2'(t) = -F\bigl(R_2(t)\bigr),
\end{equation}
with $\tension{}\equiv 1$ and
\begin{equation}\label{eq:eto-F}
    F(u) = \frac{\dfrac{1}{\sqrt{u^2-D_2}}+\dfrac{1}{u}+\dfrac{1}{\sqrt{u^2+D_3}}}
                {2\,u\,\log\!\bigl(\sqrt{u^2+D_3}/\sqrt{u^2-D_2}\bigr)} .
\end{equation}
Following \cite{EGN24}, we obtain the reference radius $R_2(t)$ to machine precision by
inverting \eqref{eq:eto-dae} through the quadrature
$t = \int_{R_2(t)}^{R_2(0)} \mathrm{d}u/F(u)$ with a root finder, so that the reference
solution is free of time-discretization error. We take the initial radii
$R_1(0)=2$, $R_2(0)=2.5$, $R_3(0)=3$ and integrate to $T=\tfrac12$, for which
\eqref{eq:eto-dae} gives $\bigl(R_1(T),R_2(T),R_3(T)\bigr)\approx(1.60,2.20,2.75)$; the
whole configuration contracts while the two annulus areas are preserved.

Each circle is discretized with $N$ vertices and advanced with the fixed step
$\Delta t=cN^{-2}$ with $c=0.5$, admissible for this smooth radial flow
(see the cost report below). We measure the
interface error
\begin{equation}\label{eq:eto-eGamma}
    e_\Gamma(t) := \max_{i}\max_{j}\bigl|\,\|\vertex{i}{j}(t)\| - R_i(t)\,\bigr|,
\end{equation}
the per-circle radius errors $e_{R_i}(t) := |\bar R_i^h(t)-R_i(t)|$ with $\bar R_i^h$ the
mean vertex radius of circle $i$, and the diagnostics $E_A$ \eqref{eq:diag-area} and
$L$ \eqref{eq:diag-length}. \fix{Since the exact interface is a circle of radius $R_i(t)$, its
curvature is known as well; with the sign convention of \Section{sec:spatial_discretization}
its discrete counterpart should equal $+1/R_i(t)$, each circle being convex toward
$\regionDiscrete{\regionindexm{i}}$. We measure that in the arclength-weighted norm}
{\color{revision}
\begin{equation}\label{eq:eto-eKappa}
    e_\varkappa(t) := \sqrt{\frac{\sum_{i=1}^{I_S}\sum_{j=1}^{N_i} r_{i,j}\left(\curvatureDisc{i}{j}(t)-\frac1{R_i(t)}\right)^2}
                               {\sum_{i=1}^{I_S}\sum_{j=1}^{N_i} r_{i,j}}}.
\end{equation}
}
Table~\ref{tab:eto-conv} reports the run parameters,
$e_\Gamma(T)$, \fix{$e_\varkappa(T)$,} and $E_A(T)$ for a sequence of refinements
(the innermost circle has the largest radius error, which essentially equals
$e_\Gamma$, while the outer radius errors are smaller), while
Figure~\ref{fig:eto3circle-conv} shows the computed radii against the
exact ones, the interface error against $N$, and the energy history.

\begin{table}[tbp]
  \centering
  \begin{tabular}{r c r c c c c c}
\hline
$N$ & $\Delta t$ & steps & $e_\Gamma(T)$ & EOC\textcolor{revision}{$_\Gamma$} & \textcolor{revision}{$e_\varkappa(T)$} & \textcolor{revision}{EOC$_\varkappa$} & $E_A(T)$ \\
\hline
16 & $1.95\times10^{-3}$ & 256 & $1.13\times10^{-1}$ & -- & \textcolor{revision}{$3.47\times10^{-2}$} & \textcolor{revision}{--} & $1.93\times10^{-4}$ \\
32 & $4.88\times10^{-4}$ & 1024 & $7.94\times10^{-2}$ & 0.50 & \textcolor{revision}{$2.00\times10^{-2}$} & \textcolor{revision}{0.79} & $4.16\times10^{-5}$ \\
64 & $1.22\times10^{-4}$ & 4096 & $4.48\times10^{-2}$ & 0.83 & \textcolor{revision}{$1.05\times10^{-2}$} & \textcolor{revision}{0.94} & $8.83\times10^{-6}$ \\
128 & $3.05\times10^{-5}$ & 16384 & $2.04\times10^{-2}$ & 1.14 & \textcolor{revision}{$4.60\times10^{-3}$} & \textcolor{revision}{1.18} & $1.95\times10^{-6}$ \\
\hline
\end{tabular}

  \caption{Spatial convergence and run parameters for the exact three-circle solution
    \eqref{eq:eto-dae} in the whole space ($c=0.5$, $\tension{}=1$, integrated to
    $T=\tfrac12$; $I_R=4$, $I_P=3$, $I_S=3$, $I_T=0$). \fix{The interface error
    \eqref{eq:eto-eGamma} and the curvature error \eqref{eq:eto-eKappa} are listed with their
    experimental orders of convergence.}}
  \label{tab:eto-conv}
\end{table}

\begin{figure}[tbp]
  \centering
  \includegraphics[width=0.8\linewidth]{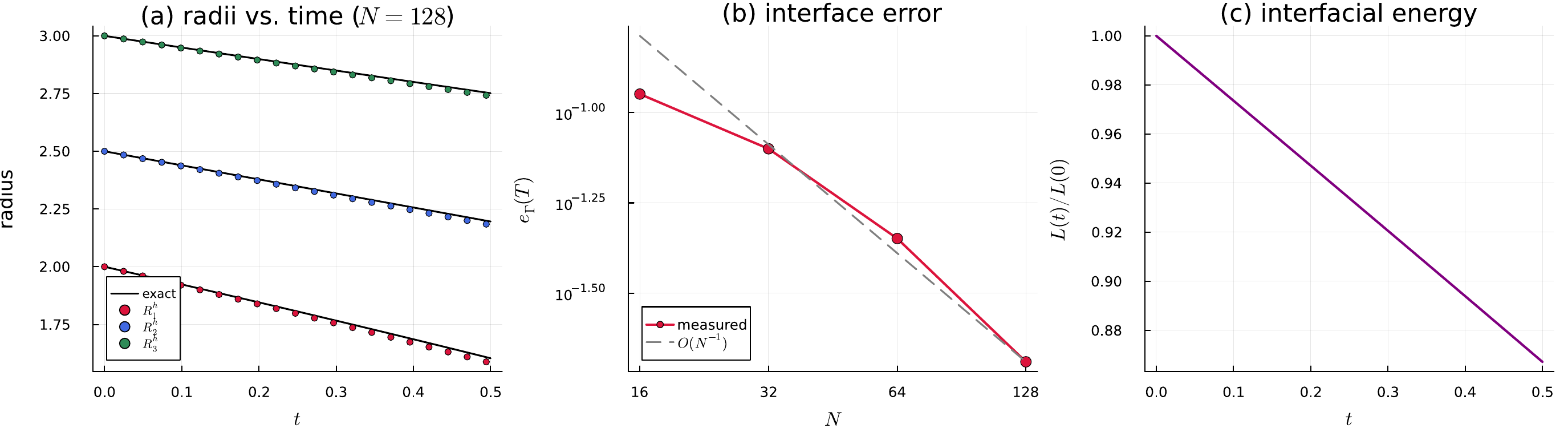}
  \caption{Convergence for the three-circle test. (a) Computed mean radii
    $\bar R_i^h(t)$ (markers) against the exact $R_i(t)$ (solid), $N=128$.
    (b) Interface error $e_\Gamma(T)$ against $N$, log--log, with an $O(N^{-1})$
    reference slope. (c) Interfacial energy $L(t)/L(0)$, $N=128$.}
  \label{fig:eto3circle-conv}
\end{figure}

The interface error decreases under refinement, with an experimental order of
convergence (EOC) that grows from about $0.5$ on the coarsest mesh to $1.1$ at $N=128$
(Table~\ref{tab:eto-conv}). \fix{The curvature error decreases at the same rate, from an EOC
of $0.8$ to $1.2$ over the same refinements, so the geometry entering the Gibbs--Thomson
condition converges no more slowly than the interface position itself.} The phase-area drift $E_A(T)$ stays below $2\times10^{-4}$ and decreases with
$N$. It is the higher-order polygonal drift of Remark~\ref{rem:vel-vs-poly}, not a failure
of conservation: the projection conserves the discrete area at the velocity level to
machine precision (Proposition~\ref{prop:velocity-level}), whereas the shoelace areas of
the advanced polygon drift by an amount that scales linearly with the step: at the
tenfold smaller $c=0.05$ it is about ten times smaller. The interfacial energy $L(t)$ decreases
monotonically, consistent with the curve-shortening property (Proposition~\ref{prop:csp}).

The observed rate approaches first order over the tested resolutions. This rate pertains
to the complete discretization, which combines the polygonal geometry, the charge-based
field reconstruction, the velocity transfer, and the time stepping; the present
experiments do not isolate the individual contributions of these components. The
reconstructed velocity error also decreases from $1.2\times10^{-1}$ at $N=64$ to
$5.5\times10^{-2}$ at $N=128$.

{\color{revision}
\paragraph{Sensitivity to the charge-point placement.}
The distance at which the charge points are held is a free parameter of any method of this
type, and this benchmark measures what it costs. Table~\ref{tab:charge-placement} replaces
the principal distance $1/\sqrt{M_r}$ of \eqref{eq:charge} by $c/\sqrt{M_r}$ at $N=32$; the
local offset $\alpha\,r_{i,j}$ of the half space \eqref{eq:hs-source} is not varied here. As
$c$ grows from one to four the interface error falls by a factor of six, while the area
error moves by less than a factor of two; past $c=4$ neither improves further, and
$\kappa_2(A_1P)$ and the CGLS iteration count do not respond anywhere in the range. The
saturation coincides with a geometric limit: at $N=32$ the distance $4/\sqrt{M_r}$ equals
$0.5$, the separation between the circles here, beyond which the charge points of one
region would lie past its neighbor's interface. That separation caps the admissible range
from above, and the runs reported in this paper keep the
conservative $c=1$ of \cite[\S 4]{E24}, which leaves that margin on every configuration
treated here.
}

\begin{table}[tbp]
  \centering
  {\color{revision}\begin{tabular}{c c c c c c}
\hline
$c$ & $\mathrm{dist}$ & $e_\Gamma(T)$ & $E_A(T)$ & $\kappa_2(A_1P)$ & CGLS its \\
\hline
0.25 & 0.031 & $5.35\times10^{-1}$ & $1.89\times10^{-4}$ & $9.6\times10^{17}$ & 9 \\
0.50 & 0.062 & $2.24\times10^{-1}$ & $7.45\times10^{-5}$ & $1.6\times10^{18}$ & 9 \\
1.00$^\ast$ & 0.125 & $7.94\times10^{-2}$ & $4.16\times10^{-5}$ & $1.4\times10^{18}$ & 9 \\
2.00 & 0.250 & $2.52\times10^{-2}$ & $3.21\times10^{-5}$ & $2.9\times10^{18}$ & 9 \\
4.00 & 0.500 & $1.35\times10^{-2}$ & $3.02\times10^{-5}$ & $2.7\times10^{18}$ & 9 \\
8.00 & 1.000 & $1.28\times10^{-2}$ & $3.01\times10^{-5}$ & $2.3\times10^{18}$ & 9 \\
\hline
\end{tabular}
}
  \caption{\fix{Sensitivity to the charge-point placement: the principal distance
    \eqref{eq:charge} is replaced by $c/\sqrt{M_r}$, with the auxiliary exponent unchanged.
    The column $\mathrm{dist}$ is the resulting distance, $^\ast$ marks the value used
    throughout the paper, and the last two columns are read at $t=0$, the error columns being
    full runs to $T=1/2$.}}
  \label{tab:charge-placement}
\end{table}

\paragraph{Cost.}
Table~\ref{tab:cost} reports the whole-space solve on this three-circle configuration at
$t=0$. The number of unknowns grows linearly in $N$; kernel assembly and the projected
solve both cost $O(N^2)$ per step (assembly dominates: $0.41$\,s versus $0.04$\,s at
$N=256$), so a run to a fixed final time costs $O(N^4)$ under $\Delta t=cN^{-2}$. No bulk
mesh is assembled. The explicit whole-space runs use the empirical rule $\Delta t=cN^{-2}$
with $c\in[0.01,0.5]$ ($c$ quoted per test); the half-space runs use the adaptive step of
Section~\ref{sec:halfspace}.

\begin{rem}[Conditioning]\label{rem:conditioning}
The projected collocation matrix is highly ill-conditioned, with $\kappa_2(A_1P)$ ranging
from $10^{17}$ to $10^{19}$ over the tested resolutions, as is
frequently observed in MFS discretizations \cite{BB08mfs,CH20}. Under the numerical
procedure used here, CGLS reaches the reported algebraic residuals in nine iterations, and
the reconstructed velocity and interface errors decrease under refinement. These
observations describe the numerical behavior of the present experiments and do not
constitute a stability analysis of the evolving-interface scheme. The area constraints, by
contrast, are applied through the small and well-conditioned Gram matrix $CC^\top$, whose
condition number remains between approximately $2.6$ and $2.8$.
\end{rem}

\begin{table}[tbp]
  \centering
  \begin{tabular}{r r c c r r}
\hline
$N$ & unknowns & CGLS its & GT resid. & asm.\ [ms] & solve [ms] \\
\hline
16 & 200 & 9 & $1.8\times10^{-10}$ & 1.11 & 0.22 \\
32 & 392 & 9 & $7.3\times10^{-9}$ & 4.77 & 0.59 \\
64 & 776 & 9 & $5.3\times10^{-8}$ & 19.22 & 1.69 \\
128 & 1544 & 9 & $9.1\times10^{-8}$ & 81.82 & 11.58 \\
256 & 3080 & 9 & $1.8\times10^{-7}$ & 407.39 & 40.04 \\
\hline
\end{tabular}

  \caption{Cost of the whole-space solve at $t=0$ for the
    three-circle configuration.}
  \label{tab:cost}
\end{table}

A limitation of this benchmark is that it contains no triple junction ($I_T=0$); no
closed-form solution with a moving triple junction is available for the multi-phase flow,
and the junction-bearing configurations are therefore assessed in
Sections~\ref{subsec:whole-space}--\ref{subsec:hs-validation}.

\subsection{Whole space}\label{subsec:whole-space}
Throughout the whole-space experiments each curve carries $N$ vertices, the time step is
$\tau = cN^{-2}$, the surface tensions are equal, and each run is stopped by the
near-stationary criterion \eqref{eq:stop-rule}. Snapshots are collected in
Figure~\ref{fig:whole-grid}, drawn on a common window per run at $t=0$, the midpoint of
the observed energy decrease, and $T$, with bounded regions colored by phase and triple
junctions marked by black dots; the area-error and energy diagnostics
\eqref{eq:diag-area}, \eqref{eq:diag-length} of the three-phase baseline are reported in
Figure~\ref{fig:whole-metrics}.

We begin with a three-phase open triple-junction baseline, which relaxes from a large
deformation to a near-stationary state with its two triple junctions preserved throughout:
the maximum phase-area drift is $E_A=3.4\times10^{-3}$, the junction angles stay within
$E_\angle=4.3\times10^{-5}$ degrees of $120^\circ$, and the interfacial energy decreases
monotonically (Figures~\ref{fig:whole-3phase} and~\ref{fig:whole-metrics}).

A four-phase network with four triple junctions probes robustness under a large geometric
distortion: the prescribed topology and the four junctions are preserved at every step, the
maximum phase-area drift is $E_A=2.9\times10^{-3}$, and the network relaxes to a
near-stationary configuration (Figure~\ref{fig:whole-4phase}).

Finally, the scheme accommodates a network with both open and closed components: an open
double-bubble coexisting with an isolated closed star-shaped curve, so the unbounded region
has two boundary components. Under the flow the star rounds toward a circle (roundness
$r_{\min}/r_{\max}\colon 0.70\to0.98$) while the open component relaxes without any topology
change; the maximum area drift is $E_A=7.1\times10^{-3}$
(Figure~\ref{fig:whole-mixed}).

\paragraph{Unequal surface tensions.}
The formulation also accommodates unequal surface tensions. As an
implementation check, we run a triple-bubble lane of three bounded regions
\fix{labeled} $A$, $B$, $C$ \fix{from left to right. Each region meets its neighbor along a
vertical interface and the unbounded region along an outer arc. The lane has} four triple
junctions, \fix{one at each end of each interior interface. We raise} the
$B$--$C$ interface tension \fix{(the right-hand one)} to $\tension{BC}=1.3$ and
\fix{leave} every other tension at one. By Remark~\ref{rem:young}, the two junctions adjacent to the $B$--$C$
interface then have the \fix{Herring--Young} angles $130.54^\circ$, $130.54^\circ$, and
$98.92^\circ$, while the two remaining junctions retain the equal-tension
value $120^\circ$. Because these angles are imposed by the geometric junction
update of Section~\ref{sec:fully-discrete-scheme} (its $120^\circ$ target
generalized to the per-junction \fix{Herring--Young} angles), this experiment checks their
preservation rather than their spontaneous emergence from the discrete
evolution\fix{; Table~\ref{tab:e1-young} lists the target angle at each junction next to the
deviation observed there}. \fix{The maximum deviation from the per-junction Herring--Young targets remains}
below $5\times10^{-5}$ degrees at both $N=32$ and $N=64$, while the maximum
phase-area drift decreases from $1.6\times10^{-3}$ to $1.3\times10^{-4}$.
\fix{The distinction matters for variational consistency. The null-space projection of
Section~\ref{sec:analysis} is assembled from the area rows alone, so the junction angles
are not an outcome of it: in the whole space, they are restored by the geometric update of
Section~\ref{sec:fully-discrete-scheme} once the vertices have moved, whereas in the half
space, the equal-angular-rate row of Table~\ref{tab:hs-rows} carries the same balance
inside the constrained velocity solve.}
The $\tension{}$-weighted interfacial energy decreases after a brief initial
adjustment, whereas the unweighted total length increases, as expected when
the surface tensions are unequal.

\begin{table}[tbp]
  \centering
  {\color{revision}\begin{tabular}{l c c c c}
\hline
junction & tensions & Herring--Young angles [$^\circ$] & \multicolumn{2}{c}{$\max\Delta\theta\,[^\circ]$} \\
 & & & $N=32$ & $N=64$ \\
\hline
top left & $(1.0,\,1.0,\,1.0)$ & $120.0/120.0/120.0$ & $3.77\times10^{-5}$ & $3.75\times10^{-5}$ \\
bottom left & $(1.0,\,1.0,\,1.0)$ & $120.0/120.0/120.0$ & $3.78\times10^{-5}$ & $3.76\times10^{-5}$ \\
top right & $(1.0,\,1.3,\,1.0)$ & $130.5/130.5/98.9$ & $4.75\times10^{-5}$ & $4.74\times10^{-5}$ \\
bottom right & $(1.0,\,1.3,\,1.0)$ & $130.5/130.5/98.9$ & $4.72\times10^{-5}$ & $4.73\times10^{-5}$ \\
\hline
\end{tabular}
}
  \caption{\fix{Unequal surface tensions ($\tension{BC}=1.3$) on the triple-bubble lane. For
    each of the four triple junctions the table lists the tensions
    $(\tension{a},\tension{b},\tension{c})$ of the three interfaces meeting there, the
    Herring--Young angles those tensions determine (\Remark{rem:young}), and the maximum
    deviation $\max\Delta\theta$ from those angles over the run, at $N=32$ and $N=64$.}}
  \label{tab:e1-young}
\end{table}

\begin{figure}[tbp]
  \centering
  \begin{subfigure}[t]{\linewidth}
    \centering
    \includegraphics[width=\linewidth]{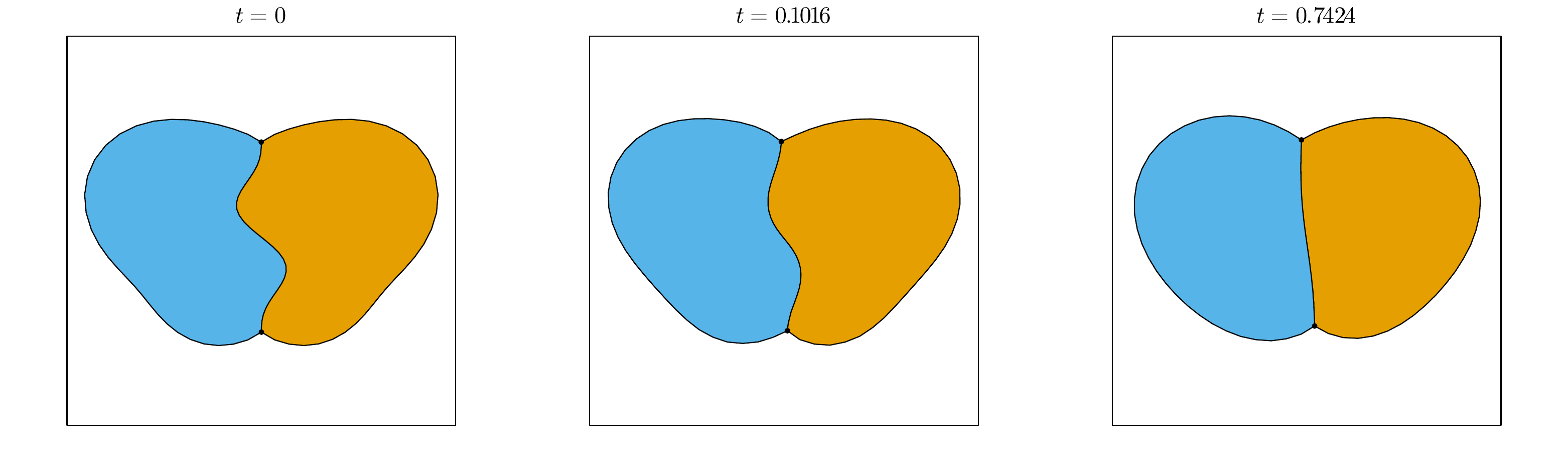}
    \caption{Three-phase baseline ($I_R=I_P=I_S=3$, $I_T=2$), $T\approx0.74$.}
    \label{fig:whole-3phase}
  \end{subfigure}\par\smallskip
  \begin{subfigure}[t]{\linewidth}
    \centering
    \includegraphics[width=\linewidth]{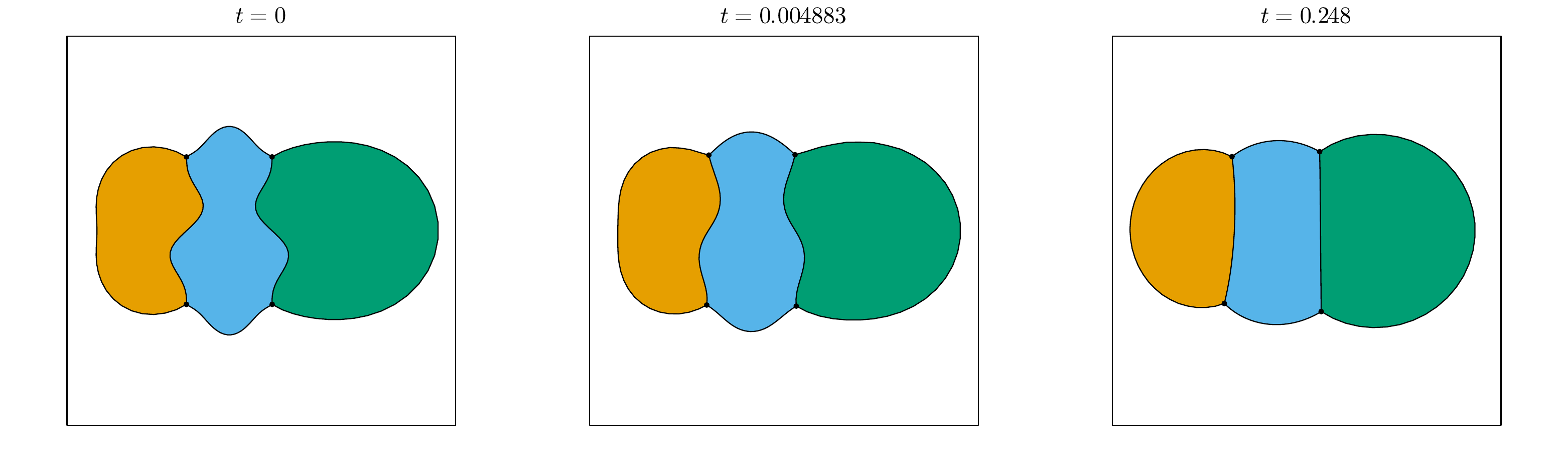}
    \caption{Four-phase fixed-topology network ($I_R=I_P=4$, $I_S=6$, $I_T=4$), $T\approx0.25$.}
    \label{fig:whole-4phase}
  \end{subfigure}\par\smallskip
  \begin{subfigure}[t]{\linewidth}
    \centering
    \includegraphics[width=\linewidth]{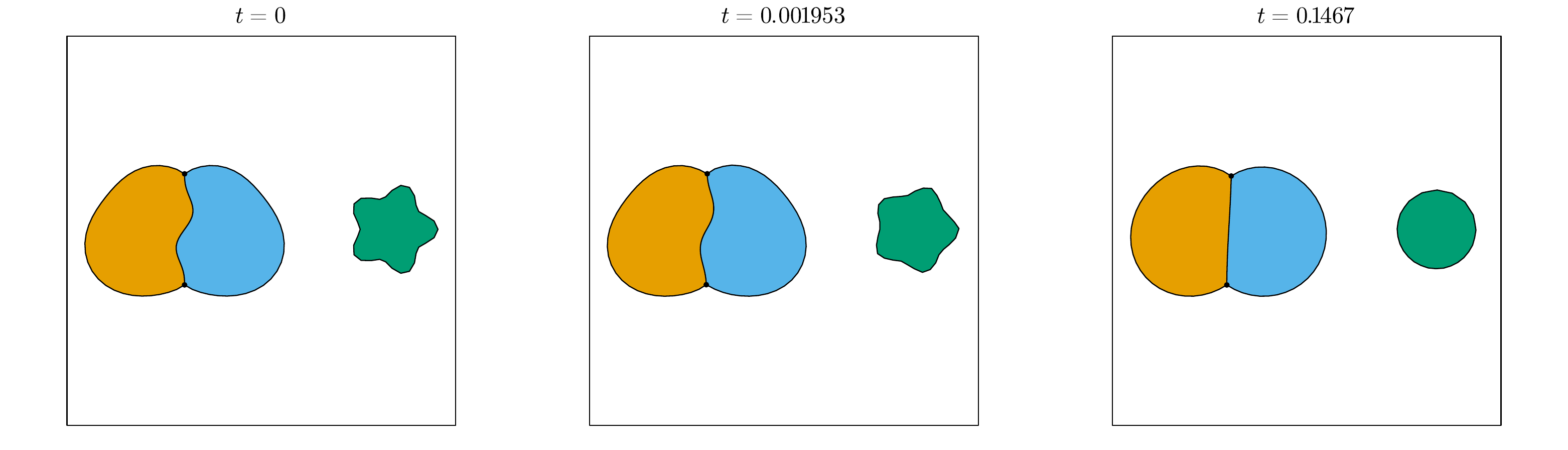}
    \caption{Mixed open/closed configuration ($I_R=I_P=4$, $I_S=4$ with one closed curve,
      $I_T=2$), $T\approx0.15$.}
    \label{fig:whole-mixed}
  \end{subfigure}
  \caption{Whole-space experiments ($N=32$, $c=0.05$, run to the near-stationary
    criterion \eqref{eq:stop-rule}): snapshots at $t=0$, the midpoint of the observed
    energy decrease, and $T$.}
  \label{fig:whole-grid}
\end{figure}

\begin{figure}[tbp]
  \centering
  \includegraphics[width=0.78\linewidth]{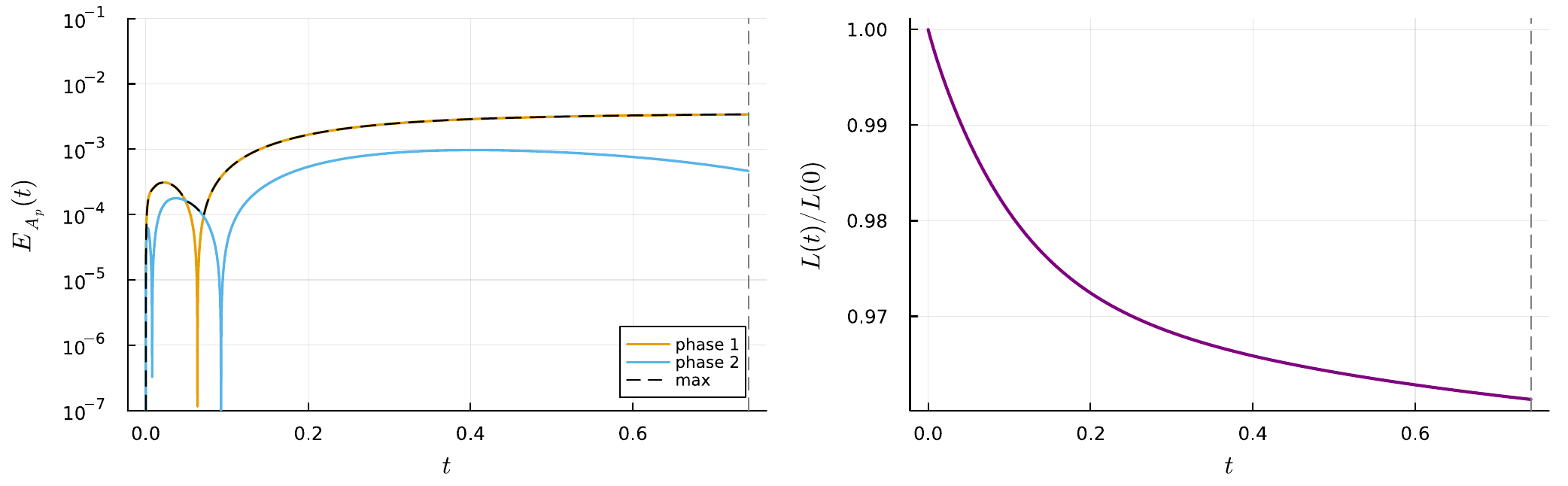}
  \caption{Area-error and energy diagnostics \eqref{eq:diag-area}, \eqref{eq:diag-length}
    for the three-phase baseline in the first row of Figure~\ref{fig:whole-grid}.}
  \label{fig:whole-metrics}
\end{figure}

\subsection{Region disappearance}\label{subsec:disappearance}
In the experiments so far, the network topology does not change.
In this subsection, we treat the disappearance of a bounded region that is enclosed by a single closed curve and
incident to no triple junction, once this curve has contracted below the resolution of the
mesh. Because the area constraints pin the total area of each phase, an isolated bounded region cannot shrink on its
own, and disappearance requires the transfer of area between two regions of the
same phase, which is possible in the case where the map $\regionPhaseMap$ is non-injective (see Section~\ref{sec:network}).

\paragraph{Detection and removal.} A closed curve $\polygon{i}$ is flagged for removal
once the area it encloses falls below a prescribed threshold,
\begin{equation}\label{eq:disappear-delta}
    A_i < \delta.
\end{equation}
For a refinement sequence, a resolution-tied choice is
$\delta_N=c_\delta h_N^2$, where
$h_N:=\max_j|\edge{i}{j}|$ is the maximum edge length of the candidate curve at
the removal step and $c_\delta>0$ is held fixed. With this choice, the area removed
at an event is bounded by $c_\delta h_N^2$ and therefore vanishes as $h_N\to0$.
At that step, the curve is deleted, and the incidence data of
Section~\ref{sec:network} are updated: the vanished region is removed from
$\curveRegionMap$ and $\regionPhaseMap$, the two regions formerly separated by
$\polygon{i}$ are merged, and the conserved-area target of the affected phase is
reset to its new polygonal value. The charge-point layout and the constraint
matrix $C$ are rebuilt for the reduced configuration, and the evolution continues. Between
events, every phase area is conserved at the velocity level
(Proposition~\ref{prop:velocity-level}); the removal step changes the affected phase
total by exactly the residual $A_i<\delta$, an identity that vanishes as $\delta\to0$
along such a refinement sequence (Remark~\ref{rem:disappear}) and is reported as a single step in the
per-phase area history.

The test case corresponds to the coarsening scenario of the introduction: two dispersed
phases, each split into three disks of distinct radii, embedded in the unbounded matrix
phase (six closed curves, no triple junctions; Figure~\ref{fig:multidrop-ripening},
$N=48$). The four satellite disks vanish in order of increasing size, and the two
surviving disks relax to the near-stationary state of \eqref{eq:stop-rule}, one round disk
per phase. Between events the constrained phase totals are flat up to a polygonal drift
below $7\times10^{-5}$, and at each event the affected phase total drops by exactly the
removed sub-resolution residual ($<\delta$, $0.47\%$--$0.53\%$ of the phase area), visible
as the staircase in Figure~\ref{fig:multidrop-ripening}.

\begin{figure}[tbp]
  \centering
  \includegraphics[width=\linewidth]{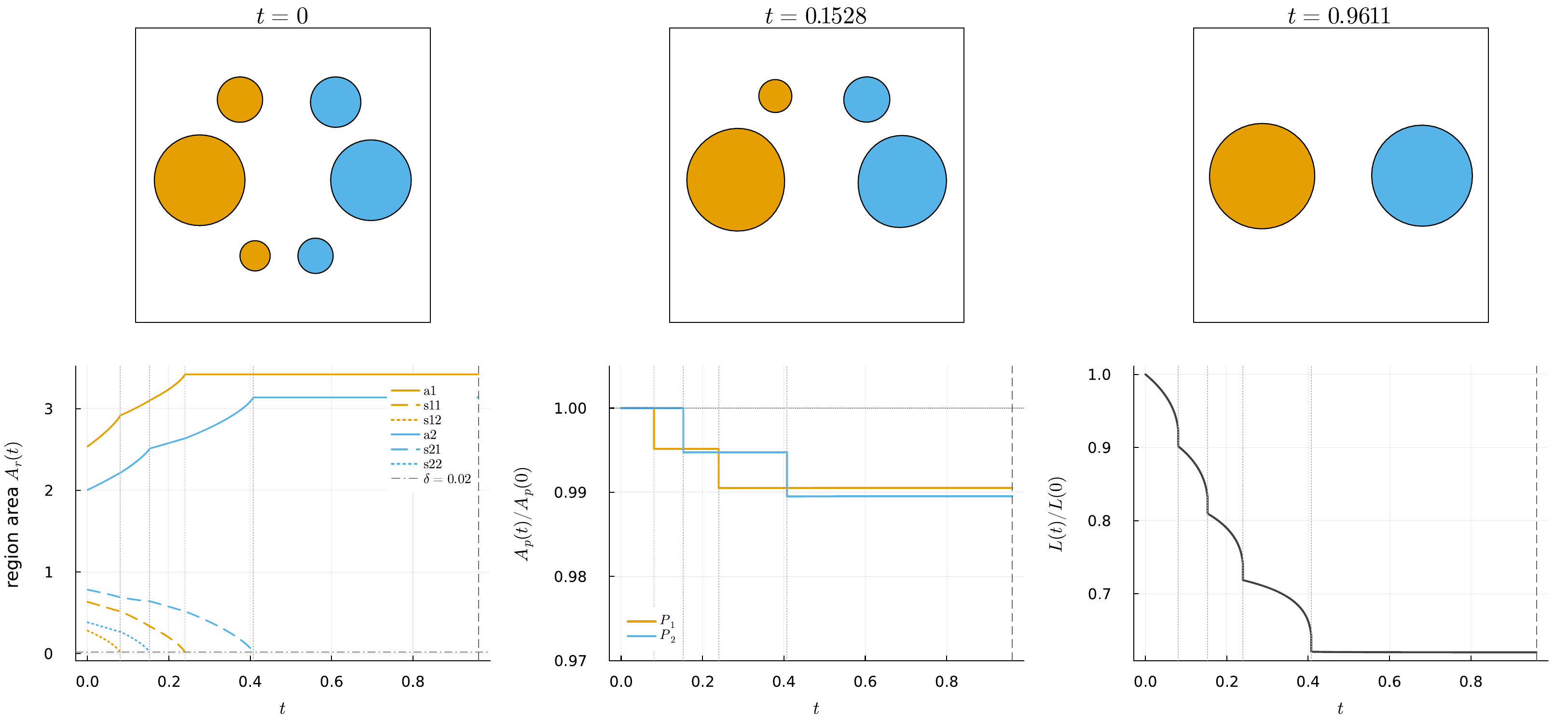}
  \caption{Intra-phase ripening of two dispersed phases in an unbounded matrix: disks of
    radii $0.90$, $0.45$, $0.30$ (phase~$1$, orange) and $0.80$, $0.50$, $0.35$
    (phase~$2$, blue), at $N=48$ per curve, $\Delta t=cN^{-2}$ with $c=0.05$, removal
    threshold $\delta=0.02$. Top: snapshots at the initial time, after the second removal,
    and at the final near-stationary state. Bottom: the six region areas $A_r(t)$ with the
    threshold $\delta$; the normalized phase totals $A_p(t)/A_p(0)$; the total interfacial
    length $L(t)/L(0)$. Dotted vertical lines mark the four removal events; the dashed
    line marks the stopping time \eqref{eq:stop-rule}.}
  \label{fig:multidrop-ripening}
\end{figure}

\begin{rem}
  Julin, Morini, Ponsiglione, and Spadaro \cite[Theorem 1.3]{JMPS23} have shown that
  any initial data $E_0\subset\R{2}$ having perimeter less than $2$ asymptotically converges to a union of disks
  whose areas sum up to the initial volume $|E_0|$ in the two-phase case under a periodic boundary condition.
  We observe a similar phenomenon in Figure~\ref{fig:multidrop-ripening} for the three-phase case.
\end{rem}

\subsection{Half space}\label{subsec:hs-validation}
We now validate the half-space scheme of Section~\ref{sec:halfspace}, in which open curves
may end on the Neumann wall as mobile contacts. All runs use the constrained velocity solve
of Section~\ref{subsec:hs-velocity}, the on-manifold Heun update of
Section~\ref{subsec:hs-manifold}, and the edge-rate redistribution constant $\beta_0=10$;
the update keeps the $90^\circ$ wall contacts and the $120^\circ$ junctions exact by
construction, the discrete area-rate rows are met to solver precision, and the polygonal
region areas drift by the small amounts quoted.
In the half-space figures the thick black line is the Neumann wall $\wall$, mobile wall
contacts are white markers, and interior triple junctions black dots; the bottom panels
report the same area-error and energy diagnostics as in Section~\ref{subsec:whole-space}.

We begin with a three-phase $M$-branch: three open curves meeting at one interior
triple junction with three mobile wall contacts, started from a strongly asymmetric,
unbalanced state ($A_2\!:\!A_1 = 7\!:\!10$). At a Gibbs--Thomson equilibrium
every arc has constant curvature, so the expected shape class is an asymmetric
circular-arc double arch. Under the flow the network relaxes toward this class: the
circle-fit residual per unit length (the root-mean-square distance of an interface from
its best-fit circle, divided by its arc length) decreases, the contact and junction angles
are held exactly by the endpoint-chart update, and the final Gibbs--Thomson residual (the
relative $\ell_2$ residual of the Gibbs--Thomson rows \eqref{eq:GTL} alone) falls below
$0.05$ (Table~\ref{tab:hs-mbranch}, Figure~\ref{fig:half-mbranch}).

\begin{figure}[tbp]
  \centering
  \includegraphics[width=0.8\linewidth]{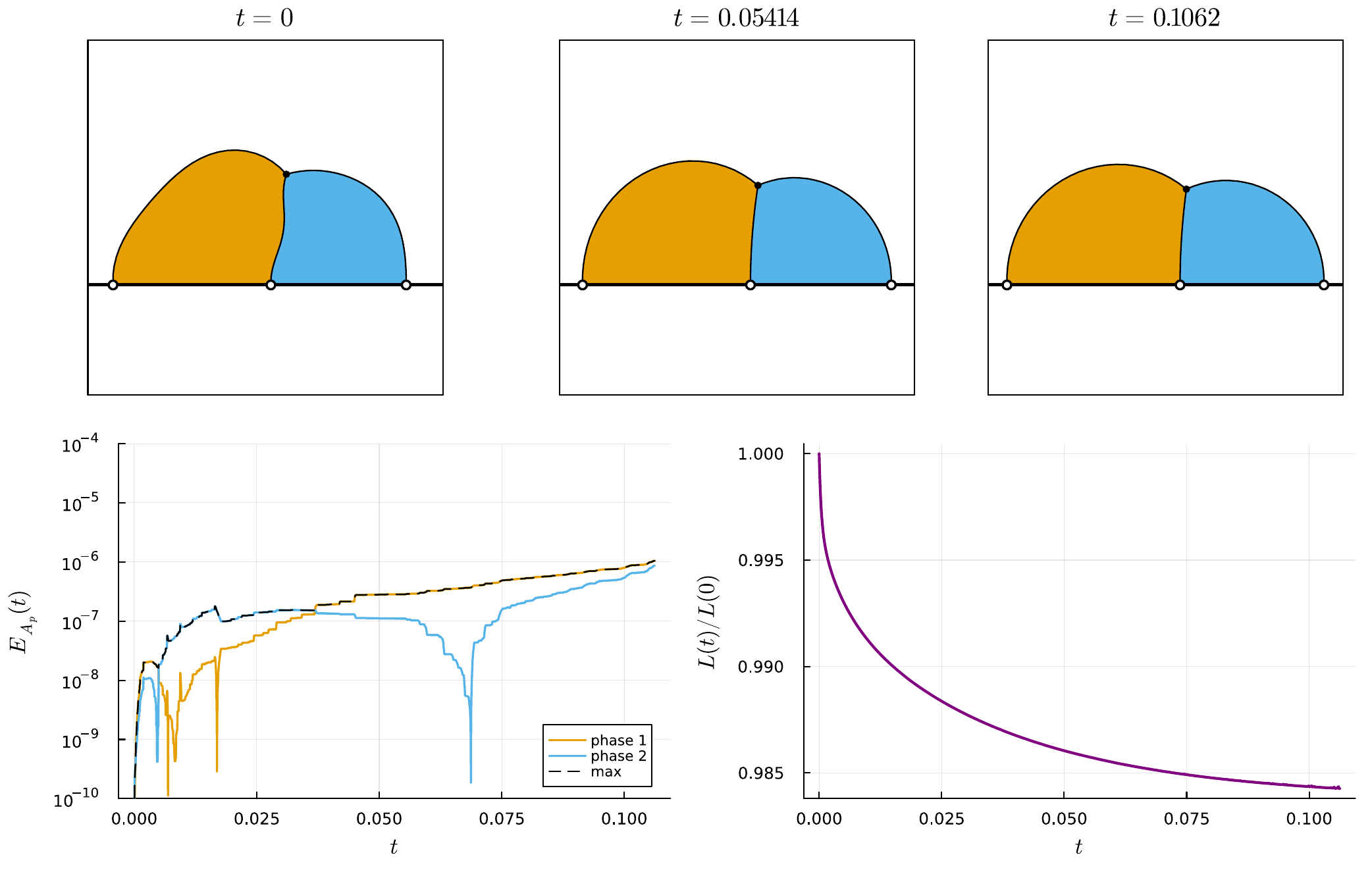}
  \caption{Half-space three-phase $M$-branch from a strongly asymmetric, unbalanced
    state ($A_2\!:\!A_1=7\!:\!10$); $N=28$, fixed horizon $T\approx0.11$. Top:
    snapshots at $t=0$, $T/2$, $T$; bottom: area-error and energy diagnostics.}
  \label{fig:half-mbranch}
\end{figure}

\begin{table}[tbp]
  \centering
  \begin{tabular}{l c}
\hline
quantity & value \\
\hline
circle-fit residual per unit length ($t=0\to T$) & $3.8\times10^{-3}$ $\to$ $1.0\times10^{-3}$ \\
max wall-contact angle deviation from $90^\circ$ & $0$ (machine) \\
max triple-junction angle deviation from $120^\circ$ & $1.4\times10^{-11}$ deg \\
max phase-area drift $E_A$ & $1.0\times10^{-6}$ \\
final Gibbs--Thomson residual & 0.019 \\
interfacial energy $L(T)/L(0)$ & 0.984 \\
\hline
\end{tabular}

  \caption{Quantitative half-space benchmark for the unbalanced $M$-branch of
    Figure~\ref{fig:half-mbranch}.}
  \label{tab:hs-mbranch}
\end{table}

To exercise many independently mobile contacts we scale the configuration up.
Figure~\ref{fig:half-4phase} shows a strongly perturbed four-phase / three-junction
network with three mobile wall contacts, relaxed toward a near-stationary asymmetric
double arch: the contacts stay ordered and orthogonal to the wall, the three junctions are
preserved, and the bounded areas are conserved to solver precision.

\begin{figure}[tbp]
  \centering
  \includegraphics[width=0.8\linewidth]{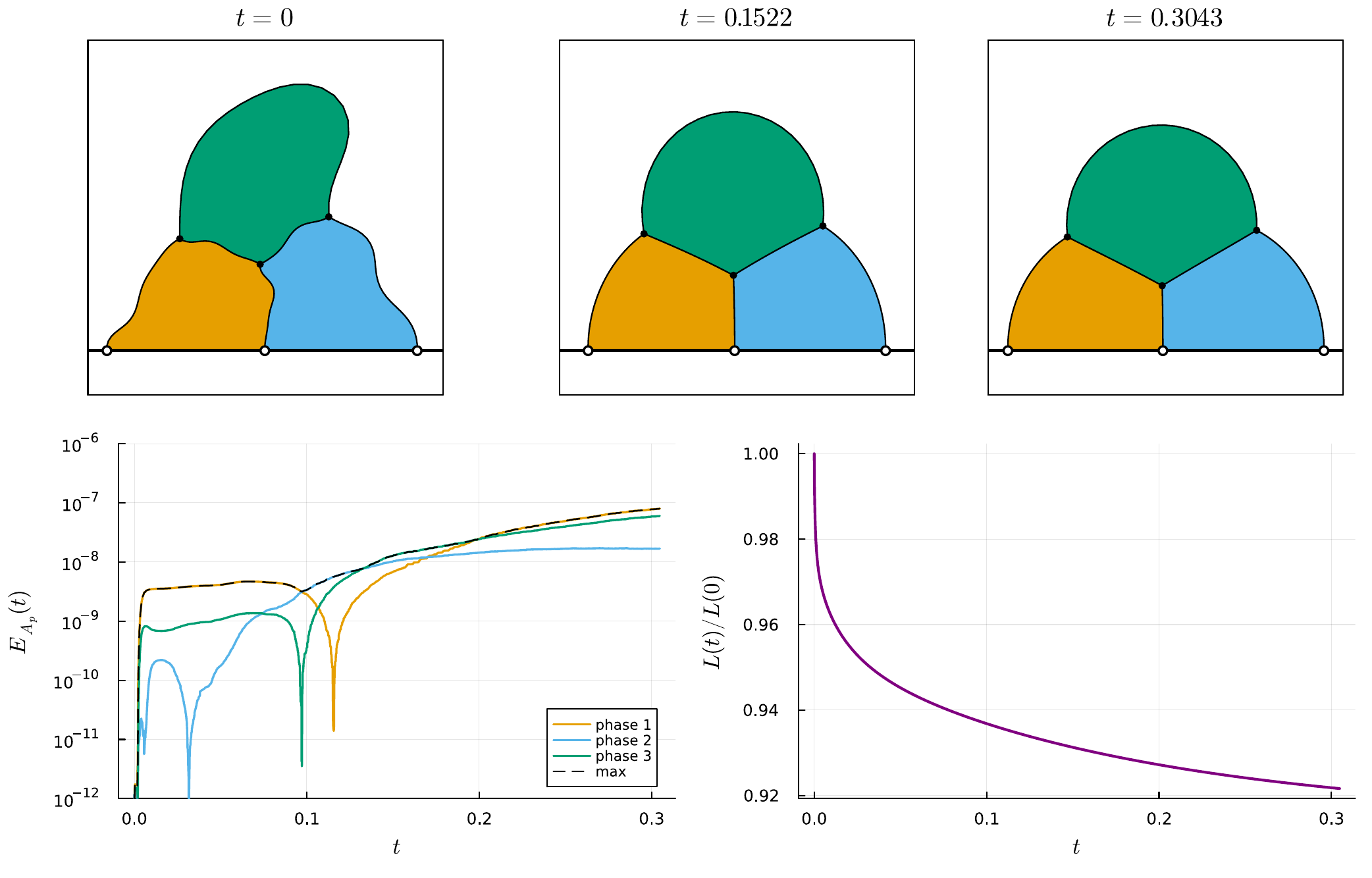}
  \caption{Half-space four-phase, three-junction network with three mobile wall
    contacts, relaxing from a strong perturbation; $N=28$, integrated to
    $T\approx0.30$. Top: snapshots at $t=0$, $T/2$, $T$; bottom: area-error and
    energy diagnostics.}
  \label{fig:half-4phase}
\end{figure}

\section{Conclusion}\label{sec:conclusion}

In this paper, we have developed a numerical scheme for the multi-phase
Mullins--Sekerka flow based on the CSM, which is a variant of the MFS, in the planar case together with the half-plane case.
Representing each chemical potential by a combination of fundamental solutions to the Laplace equation centered at
off-interface charge points has removed the need for a bulk mesh and the singular integrals,
leaving only the interfacial conditions to impose.
The discretization preserves the discrete area constraints exactly by projecting the reconstructed velocity onto their null space. Between
topological events, every bounded phase area is conserved to machine precision at the velocity level; the only topological event treated here is the
disappearance of a region enclosed by a single closed curve and incident to no junction.
The pure Neumann boundary condition is imposed exactly by image charges;
mobile contacts remain orthogonal to the boundary, and triple junctions satisfy
the Herring--Young \fix{condition}.
The accuracy of the proposed scheme has been validated against an exact three-concentric-circle solution to the underlying model.
\fix{Taken together, these give a boundary-only scheme in which no singular quadrature is required, the cost of a step is governed by the number of interface vertices alone, the wall condition holds exactly, and every bounded phase area is conserved exactly at the level of the reconstructed velocity.}

The structural result proved here is conditional on the non-degeneracy assumption
\eqref{eq:ND} and concerns exact phase-area conservation at the reconstructed-velocity
level. For curve networks with moving triple junctions and wall contacts, for which no exact
benchmark is available, the curvature correction and the half-space constrained update
have instead been assessed through refinement diagnostics, constraint residuals, area
errors, and energy dissipation. A convergence and stability analysis
of discretization of triple junctions and half-space curve networks remains an open problem.

{\color{revision}
We close by delimiting what the present work does not attempt. The scheme is planar, and
the fundamental-solution representation itself is dimension-independent.
Moreover, the interface discretization, the placement of the charge points, and the area constraints would all have
to be redesigned in three dimensions, where triple junctions become triple lines. The
surface tensions are piecewise constant and isotropic, and the unequal-tension case has
been examined only through the single check reported in
Section~\ref{subsec:whole-space}; strongly heterogeneous or anisotropic surface energies
lie outside the present validation. The Neumann boundary is restricted to a straight wall,
since the image construction is exact only for a flat boundary, and among topological events,
only the disappearance of a region enclosed by a single closed curve and incident to no junction is treated;
relaxing them so that the proposed strategy can deal with the case of three dimensions,
curved or mixed boundaries, anisotropic surface energies,
and topological transitions that involve junctions is left for future work.
}

\bibliographystyle{siam}
\bibliography{cite.bib}
\end{document}